\newcommand*\circled[1]{\tikz[baseline=(char.base)]{
            \node[shape=circle,draw,inner sep=1pt] (char) {#1};}}
\def\namedlabel#1#2{\begingroup
   \def\@currentlabel{#2}%
   \label{#1}\endgroup
}
\renewcommand{\pod}[1]{\mathchoice
  {\allowbreak \if@display \mkern 6mu\else \mkern 6mu\fi (#1)}
  {\allowbreak \if@display \mkern 6mu\else \mkern 6mu\fi (#1)}
  {\mkern4mu(#1)}
  {\mkern4mu(#1)}
}
\renewcommand\@makefntext[1]{%
  \noindent\makebox[0em][r]{\@makefnmark}#1}
\newtheorem{theorem}{Theorem}[section]
\newtheorem{proposition}[theorem]{Proposition}
\newtheorem{corollary}[theorem]{Corollary}
\newtheorem{lemma}[theorem]{Lemma}
\theoremstyle{definition}
\newtheorem{definition}[theorem]{Definition}
\newtheorem{example}[theorem]{Example}
\newtheorem{conjecture}[theorem]{Conjecture}
\newtheorem{remark}[theorem]{Remark}
\newtheorem{question}[theorem]{Question}
\newtheorem*{rep@theorem}{\rep@title}
\newcommand{\newreptheorem}[2]{%
\newenvironment{rep#1}[1]{%
 \def\rep@title{#2 \ref{##1}}%
 \begin{rep@theorem}}%
 {\end{rep@theorem}}}
\newcommand{\cL}{\mathcal{L}}
\newcommand{\cR}{\mathcal{R}}
\newcommand{\frS}{\mathfrak{S}}
\newcommand{\sfb}{\mathsf{b}}
\newcommand{\sfN}{\mathsf{N}}
\newcommand{\sfE}{\mathsf{E}}
\newcommand{\F}{F} 
\newcommand{\A}{A} 
\newcommand{\X}{X} 
\newcommand{\Y}{Y} 
\newcommand{\subwordComplex}[2]{\mathcal{SC}(#1,#2)} 
\newcommand{\ssm}{\smallsetminus} 
\newcommand{\dobigast}[1]{%
  \vcenter{#1\kern.2ex\hbox{$\ast$}\kern.2ex}}
\newcommand{\productp}[2]{\Delta_{{#1}}\times\Delta_{{#2}}} 
\newcommand{\Tam}[1]{\operatorname{Tam}_{#1}}
\def\horiz{\operatorname{horiz}}
\def\min{{\operatorname{min}}}
\def\max{{\operatorname{max}}}
\def\hroot{\operatorname{hroot}} 
\newcommand{\knu}{(k,\nu)}
\newcommand{\reverse}[1]{\overleftarrow{#1}}
\newcommand{\ol}[1]{\overline{#1}}
\definecolor{darkblue}{rgb}{0,0,0.7} 
\newcommand{\darkblue}{\color{darkblue}} 
\newcommand{\defn}[1]{\emph{\darkblue #1}} 
\newcommand{\cross}[1][black]{\raisebox{-.15cm}{\includegraphics[scale=.9]{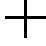}}}
\newcommand{\elbow}[1][black]{\raisebox{-.15cm}{\includegraphics[scale=.9]{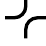}}}
\newcommand{\Tmin}{T_\min} 
\newcommand{\Tmax}{T_\max} 
\newcommand{\ncomp}{\nsim} 
\newcommand{\bmin}{\sfb^\min} 
\newcommand{\rflush}{\cR} 
\newcommand{\lflush}{\cL} 
\newcommand{\EG}{\mathcal{EG}} 
\newsavebox{\smlmat}
\savebox{\smlmat}{$\left(\begin{smallmatrix}
1 & 1 & 1 & 1 & 2 & 2 & 3 & 3 & 3 & 4 & 4 & 5 \\
5 & 4 & 3 & 2 & 6 & 4 & 5 & 4 & 3 & 6 & 4 & 5
\end{smallmatrix}\right)$}
\title[$\nu$-Tamari lattice, $\nu$-trees, $\nu$-bracket vectors, subword complexes]{The $\nu$-Tamari lattice via $\nu$-trees, $\nu$-bracket vectors, and subword complexes}
\author{Cesar Ceballos}
\address{Faculty of Mathematics, University of Vienna, Vienna, Austria}
\author{Arnau Padrol}
\address{Sorbonne Universit\'e, Institut de Math\'ematiques de Jussieu - Paris Rive Gauche (UMR 7586), Paris, France
} 
\author{Camilo Sarmiento}
\address{Departamento de Matem\'aticas y Estad\'istica, Universidad del Norte, Barranquilla, Colombia}
\thanks{%
This project was partially supported by the project ``Austria/France Scientific \& Technological Cooperation", which is co-financed by the Austrian Federal Ministry of Science, Research and Economy BMWFW (Project No. FR 10/2018) and by the French Ministry of Foreign Affairs and International Development (PHC Amadeus 2018 Project No. 39444WJ).
The research of C.C. was also supported by the Austrian Science Foundation FWF, grant F 5008-N15, in the framework of the Special Research Program ``Algorithmic and Enumerative Combinatorics''; A.P. was also supported by the grant ANR-17-CE40-0018 of the French National Research Agency ANR (project CAPPS), as well as the program PEPS Jeunes Chercheur-e-s from the INSMI
}
\begin{document}

\begin{abstract}
We give new interpretations of the $\nu$-Tamari lattice of Pr\'eville-Ratelle and Viennot. First, we describe it as a rotation lattice of $\nu$-trees, which uncovers the relation with known combinatorial objects such as \mbox{tree-like} tableaux and  north-east fillings.
Then, using a formulation in terms of bracket vectors of $\nu$-trees and componentwise order, we provide a simple description of the lattice property. 
We also show that the {$\nu$-Tamari} lattice is isomorphic to the increasing-flip poset of a suitably chosen subword complex, and settle a special case of Rubey's lattice conjecture concerning the poset of pipe dreams defined by chute moves.
Finally, this point of view generalizes to multi $\nu$-Tamari complexes, and gives (conjectural) insight on their geometric realizability via polytopal subdivisions of multiassociahedra.
\end{abstract}

\maketitle

\section{Introduction}

The $\nu$-Tamari lattice is a partial order on the set of lattice paths weakly above a given path $\nu$ that generalizes the Dyck/ballot-path formulation of the classical Tamari lattice~\cite{TamariFestschrift,TamariPhD}. It has been recently introduced by Pr\'eville-Ratelle and Viennot~\cite{PrevilleRatelleViennot2014} as a further generalization of the $m$-Tamari lattice on Fuss-Catalan paths, which was first considered by F.~Bergeron and Pr\'eville-Ratelle in connection to the combinatorics of higher diagonal coinvariant spaces~\cite{BergeronPrevilleRatelle2012}. 
These lattices have attracted considerable attention in other areas such as representation theory and Hopf algebras~\cite{BCP2013,ceballos017,NovelliThibon2014}, and remarkable enumerative, algebraic, combinatorial and geometric properties have been discovered~\cite{Bergeron2012,BFP2011,tams2017,chapoton_sur_2005,fang_enumeration_2017}.

\begin{figure}[htbp]
\begin{center}
 \makebox[\textwidth][c]{\includegraphics[width=1.2\textwidth]{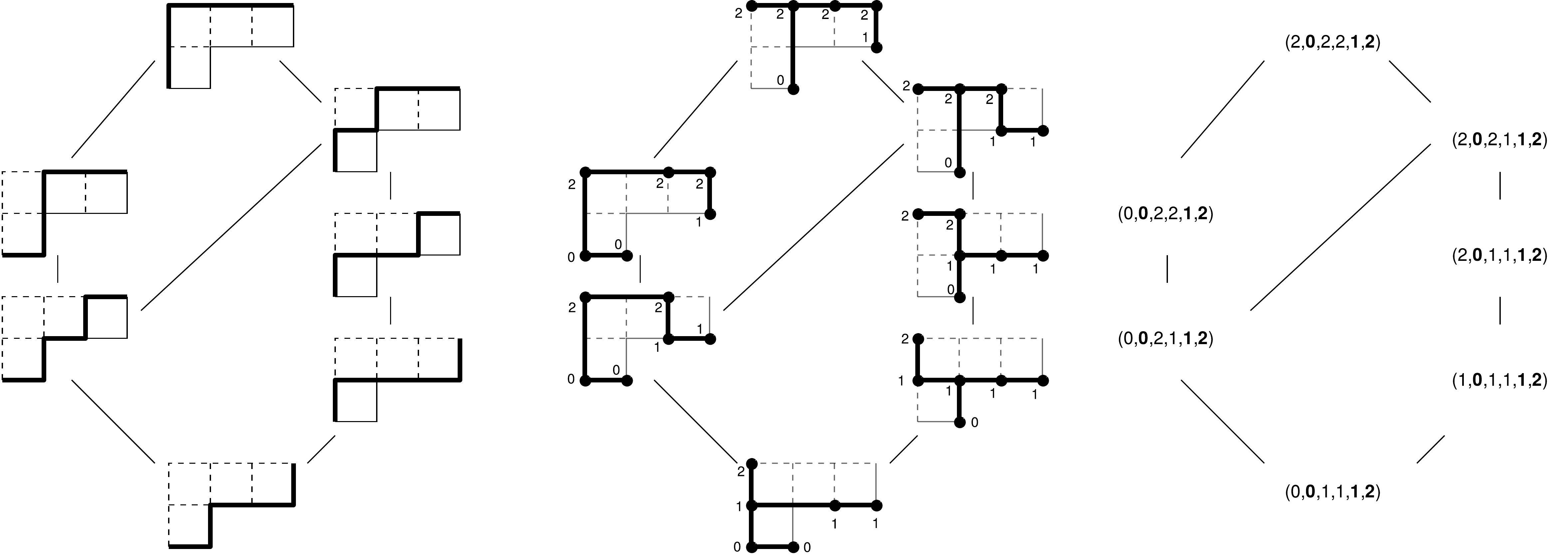}}
\end{center}
\caption{The $\nu$-Tamari lattice for $\nu=\sfE\sfN\sfE\sfE\sfN$ (left). The rotation lattice of $\nu$-trees (middle). The lattice of $\nu$-bracket vectors (right).}
\label{fig:Tamari35}
\end{figure}

In this paper, we present a new formulation of the $\nu$-Tamari lattice as a rotation lattice of $\nu$-trees (Theorem~\ref{thm:rotationlatticetrees}), which specializes to the rotation lattice of binary trees in the Catalan case $\nu=(\sfN\sfE)^n$. 
Our $\nu$-trees are the $k=1$ case of Jonsson's \defn{$(k+1)$-diagonal-free} maximal subsets of a Ferrers diagram associated with~$\nu$~\cite{Jonsson2005}. 
These were later studied by Serrano and Stump under the name of \defn{$k$-north-east fillings} of Ferrers diagrams~\cite{serrano_maximal_2012}, who showed that they can be realized as pipe dreams and facets of certain subword complexes. This connection for the classical Catalan case had been shown earlier by Woo in~\cite{woo_catalan_2004}.
Additionally, $\nu$-trees are also equivalent to \defn{non-crossing tree-like tableaux} (Proposition~\ref{prop:nutreesaretreelike}), introduced by Aval, Bossicault, and Nadeau in~\cite{AvalBoussicaultNadeau2013} 
in connection to permutations and alternative tableaux~\cite{CorteelNadeau2009,CorteelWilliams2011,Postnikov2006,SteingrimssonWilliams2007}, and their Catalan subfamilies~\cite{AvalViennot2010,viennot2007}. Furthermore, the $\nu$-Tamari lattice is one of Viennot's recently presented \defn{Maule posets}~\cite{ViennotMaule2017}.

The presentation in terms of $\nu$-trees provides new interpretations of properties of the $\nu$-Tamari lattice. 
In particular, we show that the $\nu$-Tamari lattice is isomorphic to a lattice of \defn{$\nu$-bracket vectors} under componentwise order (Theorem~\ref{thm:bracketlattice}). 
This leads to a description of the meet of two $\nu$-bracket vectors as their componentwise minimum (Proposition~\ref{prop:meet}), providing a simple proof of the lattice property similar to the one shown by Huang and Tamari in their original ``simple proof of the lattice property''~\cite{HuangTamari1972}. 
An example of Theorem~\ref{thm:rotationlatticetrees} and Theorem~\ref{thm:bracketlattice} is illustrated in Figure~\ref{fig:Tamari35}. The equivalence of the $\nu$-Tamari lattice and the rotation lattice of $\nu$-trees follows from a \defn{flushing} bijection between $\nu$-trees and $\nu$-paths (Proposition~\ref{prop_flushingbijection}).
This bijection is shown to be equivalent to (a slight generalization of) a bijection between certain pipe dreams and Dyck paths presented by Woo in~\cite[Section~3]{woo_catalan_2004}, which is described in terms of the Edelman-Greene correspondence (Proposition~\ref{prop:EdelmanGreene}).

Moreover, our results imply that the $\nu$-Tamari lattice is isomorphic to the increasing flip poset of
a suitably chosen subword complex (Theorem~\ref{thm:TamariSubwordComplex}). Subword complexes are simplicial complexes introduced by Knutson and Miller in their study of the Gr\"obner geometry of Schubert varieties~\cite{knutson_subword_2004,knutson_grobner_2005}.
Our result generalizes a known result of Pilaud and Pocchiola for the classical Tamari lattice~\cite[Section~3.3 and Theorem~23]{PilaudPocchiola}, which has been rediscovered by Stump~\cite{Stump} and Stump and Serrano in~\cite{serrano_maximal_2012}, and which follows from Woo's bijection in~\cite{woo_catalan_2004}.
As a consequence of our result, we settle a special case of Rubey's Lattice Conjecture~\cite[Conjecture~2.8]{rubey_maximal_2012}, which affirms that a poset of reduced pipe dreams defined by (general) chute moves has the structure of a lattice (Theorem~\ref{thm_RubeysConjecture}).

The relation between the Tamari lattice and subword complexes has inspired further connections with pseudotriangulation polytopes~\cite{RoteSantosStreinu-polytopePseudotriangulations,RoteSantosStreinu-survey}, cluster algebras~\cite{CeballosPilaud,FominZelevinsky-ClusterAlgebrasI,FominZelevinsky-ClusterAlgebrasII}, Hopf algebras~\cite{ceballos_Hopf_2017}, and multiassociahedra~\cite{Jonsson2005,soll_type-b_2009}. 
Concerning the latter, the definition of the multiassociahedron can be naturally generalized to $\nu$-trees, giving rise to the $(k, \nu)$-Tamari complex, which is also a subword complex (these are the complexes of $(k+1)$-diagonal free subsets and $k$-north-east fillings considered in~\cite{Jonsson2005} and~\cite{serrano_maximal_2012}). 
For special choices of $k$ and $\nu$, we show that  the facet adjacency graph of the $(k,\nu)$-Tamari complex can be realized as the edge graph of a polytopal subdivision of a multiassociahedron (Proposition~\ref{prop:multi}), partially extending previous results for the case $k=1$~\cite{tams2017}. It would be interesting to know whether a similar result might hold for more general $k$ and~$\nu$ (Question~\ref{que:multisubd}).

\section{The rotation lattice of \texorpdfstring{$\nu$}{v}-trees}\label{sec:rotationlatticevtrees}

Let $\nu$ be a lattice path on the plane consisting of a finite number of north and east unit steps. We denote by $\F_\nu$ the Ferrers diagram that lies weakly above $\nu$ inside the smallest rectangle containing~$\nu$, and by $\A_\nu$ the set of lattice points weakly above~$\nu$ in~$\F_\nu$. 
We say that $p,q\in \A_\nu$ are \defn{$\nu$-incompatible}, and write $p \ncomp q$,  if and only if~$p$ is southwest or northeast to $q$ and the smallest rectangle containing $p$ and $q$ lies entirely inside $\F_\nu$. Otherwise, we say that $p$ and $q$ are $\nu$-compatible;  see Figure~\ref{fig:vcompatibility}.

\begin{figure}[htbp]
	\begin{overpic}[width=1\textwidth]{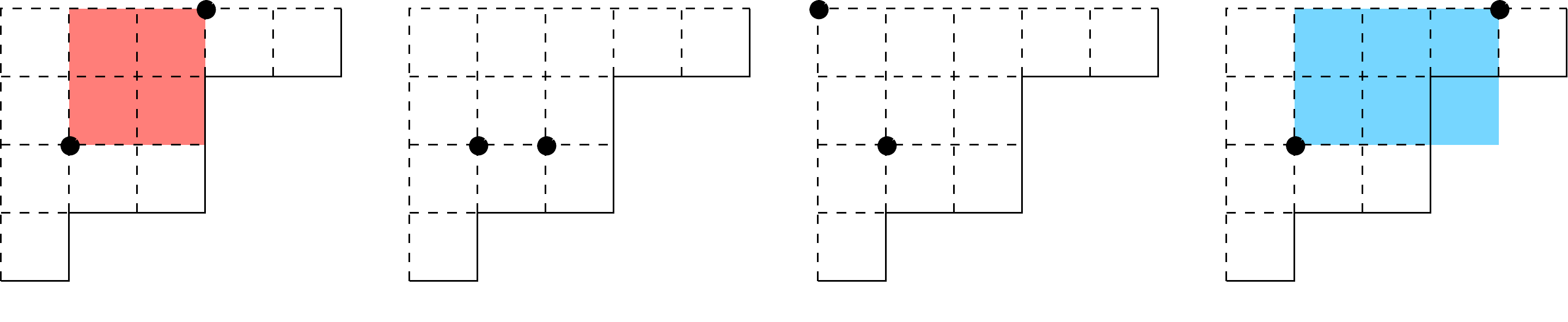}
		\put(14,6){$\nu$}
		\put(2.7,9.3){$p$}
		\put(14,17.7){$q$}
		\put(.5,-1){\color{red} $\nu$-incompatible}
		\put(28.5,9.3){$p$}
		\put(35.3,9.3){$q$}
		\put(28,-1){$\nu$-compatible}
		\put(54.5,9.3){$p$}
		\put(53,17.7){$q$}
		\put(54,-1){$\nu$-compatible}
		\put(80.5,9.3){$p$}
		\put(95.9,17.7){$q$}
		\put(80,-1){$\nu$-compatible}
	
	\end{overpic}
\caption{The $\nu$-compatibility relation.}
\label{fig:vcompatibility}
\end{figure}

\begin{definition}
A \defn{$\nu$-tree} is a maximal collection of pairwise $\nu$-compatible elements in $\A_\nu$. 
\end{definition}

We start with a lemma with many structural properties of $\nu$-trees. We omit the proof, which follows easily from the definition.
\begin{lemma}\label{lem:vtreeproperties}
Every $\nu$-tree $T$ contains the following points of $\A_\nu$:
\begin{enumerate}
\item the top-left corner of $\A_\nu$, which we call the \defn{root} of $T$;\label{it:corner}
\item the \defn{valleys} of $\nu$ (i.e.\ points between an east and a north step of $\nu$); \label{it:irrelevant1}
\item the starting points of the initial north steps of $\nu$; \label{it:irrelevant2}
\item the ending points of the final east steps of $\nu$; \label{it:irrelevant3}
\item at least one element on each column of $\A_\nu$;  \label{item:nonemptycolumn}
\item at least one element on each row of $\A_\nu$; and \label{item:nonemptyrow}
\item for every point different from the root there is either a point above it in the same column, or a point to its left in the same row, but not both. \label{it:tree-like}
\end{enumerate}
\end{lemma}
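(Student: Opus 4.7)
The plan is to verify each of the seven properties separately, using only the definition of $\nu$-compatibility and the maximality of $T$. The unifying strategy is: for (1)-(4) I would show that the distinguished point is $\nu$-compatible with every element of $\A_\nu$, which forces it into $T$ by maximality; for (5)-(7) I would argue by contradiction, exhibiting in each case a point that could be added to $T$ if the property failed.

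For items (1)-(4), I would analyze when the minimal axis-aligned rectangle spanned by two points of $\A_\nu$ sticks out of $\F_\nu$. The root is weakly northwest of every point of $\A_\nu$, so no pair involving the root is in strict SW/NE relation, making the root $\nu$-compatible with every point. For a valley $v$ and any $q \in \A_\nu$ in SW or NE relation with $v$, the minimal rectangle through $v$ and $q$ has a corner immediately south or east of $v$, which lies outside $\F_\nu$ because $\nu$ bends outward at a valley; hence $v$ is compatible with everything. The starting points of the initial north segment and the ending points of the final east segment are handled by the same local argument, since $\nu$ closes off the rectangle on one side. Maximality then forces all these points into every $\nu$-tree.

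For items (5) and (6), the row/column symmetry lets me treat columns only. Assume for contradiction that some column $c$ contains no element of $T$. The goal is to exhibit a point in column $c$ that is $\nu$-compatible with every element of $T$, contradicting maximality. I would compare the top candidate $(c,n)$ with the bottom point of column $c$ (which lies on $\nu$): an obstruction to the top candidate must come from some $q \in T$ strictly southwest of it whose row is still inside $\F_\nu$ at column $c$, while an obstruction to the bottom candidate must come from some $q \in T$ strictly northeast of it in a row where $\nu$ has not yet climbed above the bottom of column $c$. The staircase shape of $\F_\nu$ prevents both obstructions from occurring simultaneously, so at least one of the two candidates can be added to $T$.

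For item (7), the ``at most one'' direction is a clean rectangle argument: if $p$ admitted both a point $q$ above it in its column and a point $q'$ to its left in its row, then $q$ is strictly NE of $q'$ and the axis-aligned rectangle they span has $p$ at its southeast corner; since $\F_\nu$ is a Ferrers diagram (closed under northwest moves) and contains $p$, the entire rectangle lies in $\F_\nu$, forcing $q$ and $q'$ to be $\nu$-incompatible and contradicting their coexistence in $T$. The ``at least one'' direction is the main obstacle. I would argue again by maximality: the hypothesis $p \neq r$ together with $r \in T$ handles the degenerate cases where $p$ lies in the top row or leftmost column (the root itself serves as the required witness), while in the interior case the same ``top-or-left candidate'' analysis as in (5)-(6) produces a $\nu$-compatible point strictly above $p$ in its column or strictly to its left in its row, contradicting maximality.
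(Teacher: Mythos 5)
Your treatment of items (1)--(4) (compatibility with \emph{every} point of $\A_\nu$ plus maximality) and of the ``not both'' half of (7) (the rectangle spanned by a point above $p$ and a point to its left has $p$ as its southeast corner, hence lies in $\F_\nu$) is correct, and since the paper explicitly omits the proof of this lemma, an elementary argument of this kind is exactly what is intended. The gap is in (5)--(6), and it propagates to the ``at least one'' half of (7), which you reduce to the same step. Your key claim --- that an obstruction to the top candidate and an obstruction to the bottom candidate of an empty column cannot coexist --- is precisely what needs proof, and as stated it is false for pairwise $\nu$-compatible configurations. Write points as $(x,y)$ with $x$ the column and $y$ the height, and take $\nu=\sfE\sfE\sfE\sfN\sfN$, so that $\A_\nu$ is the full $4\times 3$ grid and every spanned rectangle lies in $\F_\nu$. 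The two points $(0,1)$ and $(3,1)$ are $\nu$-compatible (same row), yet $(0,1)$ is incompatible with the top point $(1,2)$ of column $x=1$ and $(3,1)$ is incompatible with its bottom point $(1,0)$: neither of your two candidates can be added, and the only addable point of that column is the \emph{middle} one $(1,1)$. Your argument only forces the western witness $q$ and the eastern witness $q'$ to be incompatible when $y_q<y_{q'}$; when $y_q\ge y_{q'}$ they coexist, and nothing in your sketch (which does not invoke maximality at this point) rules that out, so no contradiction is reached.

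The repair is to choose the candidate height adaptively rather than testing only the top and bottom. Let $c$ be the empty column, $y_b$ the height of its lowest point of $\A_\nu$, and let $W=\{q=(x_q,y_q)\in T:\ x_q<c,\ y_q\ge y_b\}$ be the set of possible western blockers (any $q$ west of $c$ with $y_q<y_b$ is harmless, since the rectangle it spans with a point of column $c$ dips below $\nu$). If $W=\emptyset$, the top point $(c,n)$ has no western blocker and cannot have an eastern one, so it is compatible with all of $T$. Otherwise set $h=\min\{y_q:q\in W\}$ and consider $(c,h)$: there is no western blocker by minimality of $h$, and an eastern blocker $q'=(x',y')$ would satisfy $x'>c$, $y'>h$ and $(x',h)\in\A_\nu$, whence $q'$ and the element of $W$ at height $h$ would be $\nu$-incompatible, contradicting pairwise compatibility of $T$. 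Thus $(c,h)$ can be added, contradicting maximality; this proves (5), and (6) follows by the reflection symmetry you invoke. The same choice, applied to the points of column $x_p$ strictly above a node $p$ (where, as you note, western blockers must lie strictly above the row of $p$ because anything at or below it would be incompatible with $p$ or to its left in its row), closes the ``at least one'' half of (7). With these modifications your proof is complete.
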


\begin{definition}
We refer to the elements of a $\nu$-tree $T$ as \defn{nodes}, and to 
the top-left corner of $\A_\nu$ as the \defn{root} of $T$.
\end{definition}

Recall that a rooted binary tree is a rooted tree in which each vertex has at most two children, which are labeled \defn{left} and \defn{right}.
We associate a rooted binary tree $\tau$ to each $\nu$-tree $T$ by connecting each element $p$ of $T$ other than the root with the next element of $T$ north or west of $p$ (exactly one of these two exists, by \eqref{it:tree-like}). See Figure~\ref{fig:Tamari35} (middle). 
The $\nu$-compatibility condition implies that the planar drawing of $\tau$ induced by $T$ is non-crossing; otherwise, the parent nodes of two crossing edges would be $\nu$-incompatible.

\begin{lemma}
Each rooted binary tree $\tau$ can be obtained uniquely as the binary tree of a $\nu$-tree~$T$, where $\nu$ is uniquely determined by $\tau$.
\end{lemma}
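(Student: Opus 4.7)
The plan is to proceed by induction on $|\tau|$. The base case $|\tau|=1$ is trivial: $\nu$ is the empty path and $T$ is the singleton containing the root. For the inductive step, let $r$ be the root of $\tau$ with south and east subtrees $\tau_S$ and $\tau_E$ (either of which may be empty). When both are nonempty, the induction hypothesis gives unique realizations $(T_S,\nu_S)$ and $(T_E,\nu_E)$ with bounding rectangles $[0,W_S-1]\times[0,H_S-1]$ and $[0,W_E-1]\times[0,H_E-1]$.

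The existence construction places the two subtrees into the southwest and northeast corners of a larger rectangle: keep $T_S$ at its current position (so its root sits at $(0,H_S-1)$), translate $T_E$ by $(W_S,H_S)$ (so its root lands at $(W_S,H_S+H_E-1)$), and place $r$ at the top-left corner $(0,H_S+H_E-1)$. The resulting set
\[
T \;=\; \{r\}\cup T_S \cup \bigl(T_E+(W_S,H_S)\bigr)
\]
lies in the rectangle $[0,W_S+W_E-1]\times[0,H_S+H_E-1]$. The decisive feature is that $T_S$ and the translate of $T_E$ are disjoint in both columns and rows: $T_S$ occupies columns $[0,W_S-1]$ and rows $[0,H_S-1]$, while $T_E+(W_S,H_S)$ occupies columns $[W_S,W_S+W_E-1]$ and rows $[H_S,H_S+H_E-1]$. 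From this separation one checks that inside $T$ the next-north and next-west of every node of $T_S$ (respectively of the shifted $T_E$) coincide with its parent in $T_S$ (respectively $T_E$), except that the root of $T_S$ acquires $r$ as its next-north and the root of the shifted $T_E$ acquires $r$ as its next-west; consequently the binary tree of $T$ is precisely $\tau$. The matching path is the concatenation $\nu=\nu_S\cdot\sfN\sfE\cdot\nu_E$ (with the natural modifications $\nu_S\cdot\sfN$ or $\sfE\cdot\nu_E$ when one subtree is empty), and a direct verification shows that $T\subseteq\A_\nu$ is maximal $\nu$-compatible, hence a $\nu$-tree.

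For uniqueness, suppose $(T',\nu')$ is any realization of $\tau$. By item~\eqref{it:corner} of Lemma~\ref{lem:vtreeproperties}, $r$ sits at the top-left corner of $\A_{\nu'}$; its south child must lie in column $0$ below $r$ and its east child in the top row. Splitting $T'$ into the descendants of $r$ along the south and east branches, one argues via item~\eqref{it:tree-like} (unique parent) that these two families occupy disjoint ranges of columns and of rows, since any shared column or row would create a node with both a north- and a west-parent coming from opposite branches. Items~\eqref{item:nonemptycolumn} and~\eqref{item:nonemptyrow} (every column and row of $\A_{\nu'}$ contains a node of $T'$) then force the two bands to be adjacent, not separated by gaps, pinning the placements exactly as in the construction above. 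The inductive hypothesis applied to each subrealization yields $T'=T$ and $\nu'=\nu$. The main obstacle in this approach is precisely the separation step: rigorously confirming that in every $\nu$-tree realizing $\tau$, the south- and east-descendants of $r$ cannot share a column or a row.
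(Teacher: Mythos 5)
Your existence half is essentially the paper's construction: decompose $\tau$ at the root, realize the two subtrees by induction, glue them with the root at the top-left corner, the south subtree in the southwest block and the east subtree in the northeast block, and take $\nu=\nu_S\,\sfN\sfE\,\nu_E$. Two remarks there. First, where you write that ``a direct verification shows that $T$ is maximal'', the paper does not verify this directly at all: it invokes the later node count (Lemma~\ref{lem:numberofnodes}, every $\nu$-tree has as many nodes as $\nu$ has lattice points); you should either carry out the verification (it is doable: any omitted point of $\A_\nu$ in the upper-left block is $\nu$-incompatible with the root of $T_S$ or of the shifted $T_E$, and points inside the two sub-diagrams are handled by induction) or cite the same counting argument. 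Second, pairwise compatibility across the two blocks deserves the one sentence the paper gives it: the peak $\sfN\sfE$ inserted between $\nu_S$ and $\nu_E$ guarantees that the rectangle spanned by a point of $T_S$ and a point of the shifted $T_E$ leaves $\F_\nu$.

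The genuine gap is the one you flag yourself: the separation step in the uniqueness half, and as stated your sketch does not close it. Property~\eqref{it:tree-like} does give \emph{disjointness}: in a $\nu$-tree all nodes of a fixed column form a chain in which each non-topmost node's parent is the next node above it (and similarly for rows), so two nodes sharing a column or a row are comparable in $\tau$ and hence lie in the same branch --- this is the correct version of your ``two parents from opposite branches'' remark. But disjointness of columns and rows does not yet place the east branch entirely northeast of the south branch: a priori a horizontal edge $a\to b$ of the south branch could jump over the column $x_c$ of the east child $c$, landing in a column the east branch never visits, and then your ``bands'' picture fails. Excluding this needs a compatibility argument, e.g.: if $a=(x_a,y)$ and $b=(x_b,y)$ with $x_a<x_c<x_b$ and $y$ below the top row, then the rectangle spanned by $a$ and $c=(x_c,h)$ has southeast corner $(x_c,y)$, which lies in $\A_\nu$ because it sits in the same row as $b\in\A_\nu$ and to its left; since rows of $\A_\nu$ are left-justified and widen upwards, the whole rectangle lies in $\F_\nu$, so $a\ncomp c$, contradicting $a,c\in T'$. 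The symmetric argument with the south child $s$ rules out vertical edges of the east branch crossing the row of $s$. With this supplement (plus the routine check, needed to apply the induction hypothesis, that each band restricted to its block is a maximal compatible set for the corresponding subpath of $\nu'$, forced by items~\eqref{item:nonemptycolumn} and~\eqref{item:nonemptyrow} to have a peak between the blocks), your induction closes. Note that the paper sidesteps all of this by only constructing the inverse map and reading off the coordinates from the boundary traversal of $\tau$; your route is more explicit about why the realization is forced, but it is exactly this forcing step that you left unproved.
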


\begin{proof}
 We construct the inverse map that, given a rooted binary tree~$\tau$, provides a lattice path~$\nu=\nu(\tau)$ and a $\nu$-tree~$T=T(\tau)$ such that $\tau$ is the binary tree associated to $T$. The proof is recursive, and describes the points of $T$ with a coordinate system with the root at the origin, the $x$-axis pointing to the right and the $y$-axis pointing downwards.
 
 Let $\tau$ be a (non-empty) rooted binary tree. Let $\tau_\ell$ and $\tau_r$ be the left and right subtrees of the root. 
 If $\tau_\ell$ is empty, we set $\nu_\ell$ and $T_\ell$ to be empty, and $x_\ell=0$; otherwise, we set $\nu_\ell=\nu(\tau_\ell)\sfN$, $T_\ell=T(\tau_\ell)$, and $x_\ell$ to be the largest $x$-coordinate of a point in $T_\ell$. Similarly, if $\tau_r$ is empty we set $\nu_r$ and $T_r$ to be empty, and $y_r=0$; otherwise, we set $\nu_r=\sfE\nu(\tau_r)$, $T_r=T(\tau_r)$, and $y_r$ to be the largest $y$-coordinate of a point in $T_r$. 
 
 Then $\nu(\tau)$ is the concatenation $\nu=\nu_\ell\nu_r$ and $T$ is the union of $(0,0)$ with the translation of $T_\ell$ by $(0,y_r+1)$ and the translation of $T_r$ by $(x_\ell+1,0)$. Note that $T$ is a $\nu$-tree by construction. Indeed, that $T$ belongs to $A_\nu$ is direct, and the peak $\sfN\sfE$ that separates $\nu(\tau_\ell)$ and $\nu(\tau_r)$ ensures that no point of $T_\ell+(0,y_r+1)$ is $\nu$-incompatible with a point in $T_r+(x_\ell+1,0)$. 
  In Lemma~\ref{lem:numberofnodes} we will prove that all $\nu$-trees have as many nodes as the number of lattice points of $\nu$. Since this is the case for $T$, we deduce that $T$ is maximal.
  
  This construction can be described non-recursively as follows. Traversing the boundary of $\tau$ counter-clockwise from the root, there are four types of steps: a node to its left or right child, which we denote by $\downarrow$ and $\rightarrow$, respectively, and from a left or right child to its parent, which we denote by  $\uparrow$ and $\leftarrow$.  To get the lattice path~$\nu$, start with the empty path and add $\sfE$ and $\sfN$ for each $\rightarrow$ and $\uparrow$ step, respectively. To construct $T$, associate each node $v$ of $\tau$ with the point $(x,y)$ where $x$ is the number of $\rightarrow$ steps before the first appearance of $v$ in the traversal, and $y$ is the number of $\uparrow$ steps after the last appearance of $v$ in the traversal. 
\end{proof}

An example of the maps described in this proof is depicted in Figure~\ref{fig:binarytree2vtree}. Note that the path $\nu(\tau)$ is the canopy of $\tau$ as described in~\cite{PrevilleRatelleViennot2014}. This concept was first introduced in~\cite{LodayRonco1998}. 

\begin{figure}[htbp]
\begin{center}
\includegraphics[width=.85\textwidth]{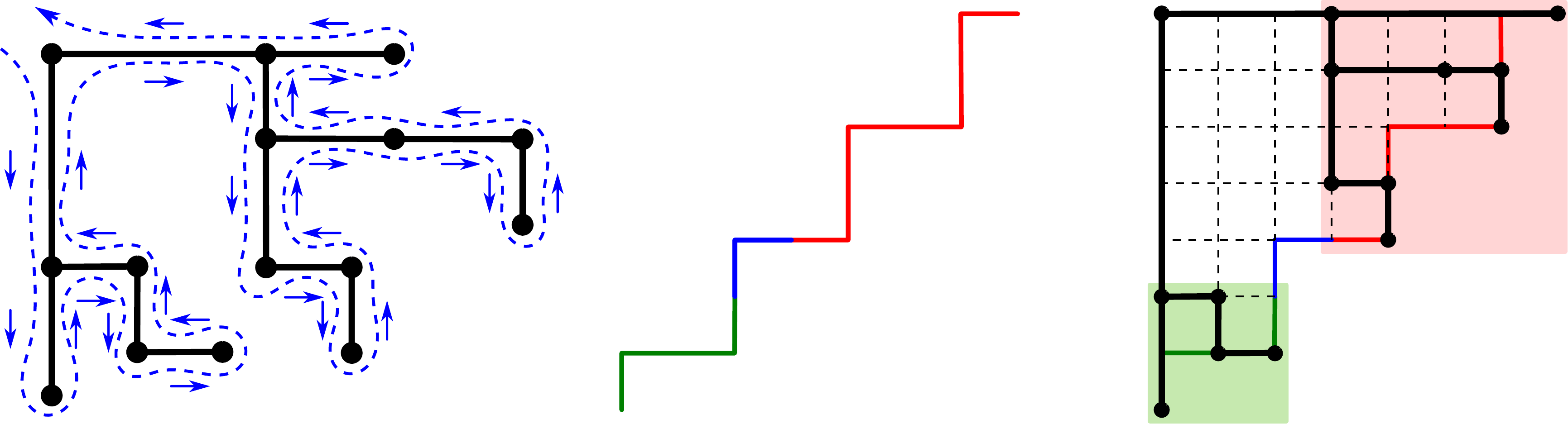}
\end{center}
\caption{The counter-clockwise traversal of the boundary of a binary tree, and its associated lattice path and $\nu$-tree.}
\label{fig:binarytree2vtree}
\end{figure}

\begin{remark}
\label{rem:existingobjects}
 The notion of $\nu$-tree has already appeared under different guises in the literature. In this paper, we use the language of $\nu$-trees on the one hand to emphasize the analogy with the binary tree representation of the classical Tamari lattice; and on the other hand because it provides structural insight on its lattice of rotations, and in particular for the definition of bracket vectors in Section~\ref{sec:brackets}.
 
 \begin{itemize}

   \item 
   In~\cite{Jonsson2005}, Jonsson defines an \defn{$\ell$-diagonal} of a polyomino $\Lambda$ as a sequence of $\ell$ boxes such that each is strictly north east of the previous one, and such that the smallest rectangle containing the boxes lies inside $\Lambda$. Hence,
   $\nu$-trees are the special case of $2$-diagonal-free maximal subsets of the Ferrers diagram~$A_{\mu}$ bounded above the path $\mu=\sfE\nu\sfN$ (the shifting with $\sfE$ and $\sfN$ is needed because in~\cite{Jonsson2005} the points are placed in the interior of the cells instead of the lattice points of the Ferrers diagram).

 \item \defn{Tree-like tableaux}, introduced in~\cite{AvalBoussicaultNadeau2013}, are fillings of a Ferrers diagram such that 
 \begin{enumerate}[(i)]
  \item\label{it:deftreelike1} the top left cell of the diagram contains a point, called the root point;
  \item\label{it:deftreelike2} for  every  non-root  pointed  cell $c$,  there  exists  either  a  pointed  cell above $c$ in the same column, or a pointed cell to its left in the same row, but not both;
  \item\label{it:deftreelike3} every column and every row possesses at least one pointed cell.
 \end{enumerate}
A \defn{crossing} of a tree-like tableau is an empty cell with both a point above it and to its left. A tree-like tableau is \defn{non-crossing} if contains no crossings.
In Proposition~\ref{prop:nutreesaretreelike} we show that $\nu$-trees are equivalent to non-crossing tree-like tableaux on the Ferrers diagram bounded by $\sfE\nu\sfN$.

Tree-like tableaux are in bijection with the widely studied permutation tableaux and alternative tableaux~\cite{CorteelWilliams2011,Postnikov2006,SteingrimssonWilliams2007}. All these families are in bijection with permutations~\cite{CorteelNadeau2009}, and each has a `Catalan' subfamily enumerated by the Catalan numbers. Non-crossing tree-like tableaux play the role of Catalan tree-like tableaux, in analogy to Catalan alternative tableaux and Catalan permutation tableaux~\cite{AvalViennot2010,viennot2007}.

\end{itemize}
\end{remark}

\begin{proposition}\label{prop:nutreesaretreelike}
 $\nu$-trees are in correspondence with non-crossing tree-like tableaux on the Ferrers diagram bounded by $\sfE\nu\sfN$.
\end{proposition}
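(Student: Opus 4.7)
The plan is to construct an explicit bijection between the lattice points of $\A_\nu$ and the cells of the Ferrers diagram $\F_{\sfE\nu\sfN}$, under which the defining conditions of $\nu$-trees translate precisely into those of non-crossing tree-like tableaux.

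First I set up the cell-point correspondence: each lattice point $(x,y) \in \A_\nu$ is sent to the unit cell of $\F_{\sfE\nu\sfN}$ whose lower-left corner is $(x,y)$. The extra initial $\sfE$ step and terminal $\sfN$ step in $\sfE\nu\sfN$ are precisely what is needed so that such a cell always fits inside $\F_{\sfE\nu\sfN}$, and this map is a bijection onto all cells of $\F_{\sfE\nu\sfN}$. Under it, the root (top-left corner of $\A_\nu$) corresponds to the top-left cell of the diagram, and rows and columns of the lattice-point picture match those of the tableau.

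Next I verify that $\nu$-compatibility of two lattice points corresponds to the absence of a crossing-like configuration on cells. The key observation is that the height function of $\nu$ is non-decreasing (since $\nu$ has only east and north steps), so for $p$ strictly southwest of $q$ in $\A_\nu$, the minimal rectangle spanned by $p$ and $q$ lies in $\F_\nu$ if and only if the analogous cell-rectangle lies in $\F_{\sfE\nu\sfN}$. A direct consequence is that an empty lattice point $p \notin T$ flanked by points of $T$ directly above in its column and to its left in its row corresponds exactly to a crossing of the tableau, since those two flanking points would necessarily be $\nu$-incompatible.

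The last step is to translate the structural conditions. By Lemma~\ref{lem:vtreeproperties}, every $\nu$-tree, viewed via the cell correspondence, satisfies conditions (i)--(iii) of a tree-like tableau, and pairwise $\nu$-compatibility yields the non-crossing property. Conversely, given a non-crossing tree-like tableau $T$ regarded as a subset of $\A_\nu$, the non-crossing condition implies by the previous step that $T$ is $\nu$-compatible, and it remains to check maximality. Given any $p \in \A_\nu\setminus T$, the non-crossing condition forces that either no pointed cell of $T$ lies to the left of $p$ in its row, or none lies above $p$ in its column. Assuming the former, condition (iii) supplies a leftmost pointed cell $r' \in T$ in that row, which must lie to the right of $p$, and condition (ii) then forces a pointed cell $q \in T$ above $r'$ in its column. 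Since $q$ is strictly northeast of $p$, monotonicity of $\nu$ guarantees that the rectangle spanned by $p$ and $q$ lies entirely in $\F_\nu$, so $p$ and $q$ are $\nu$-incompatible, contradicting that $T \cup \{p\}$ could remain $\nu$-compatible.

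The main obstacle will be the second step, namely the careful identification of $\nu$-incompatibility with the 2-diagonal configuration on cells under the lattice-point-to-cell shift, which relies on the monotonicity of $\nu$'s height function; once that is in place, the remainder of the argument is a direct translation of properties from one setting into the other.
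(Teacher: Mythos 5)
Your route coincides with the paper's for the cell--point dictionary and for the translation ``crossing $\leftrightarrow$ incompatible flanking points,'' but you replace the paper's maximality argument (a counting argument: a tree-like tableau has half-perimeter minus one points, which equals the number of lattice points of $\nu$, which by Lemma~\ref{lem:numberofnodes} is the common size of all $\nu$-trees) by a direct structural one. That substitution is legitimate in principle, but the key step of your argument rests on a false claim: you assert that whenever $q$ is strictly northeast of $p$ in $\A_\nu$, ``monotonicity of $\nu$ guarantees that the rectangle spanned by $p$ and $q$ lies entirely in $\F_\nu$.'' If that were true, every southwest--northeast pair of points of $\A_\nu$ would be $\nu$-incompatible, contradicting the very definition of $\nu$-compatibility (see the rightmost configuration in Figure~\ref{fig:vcompatibility}, which is a compatible pair in exactly this relative position). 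Monotonicity of $\nu$ controls the northwest corner of that rectangle for free, but says nothing by itself about the southeast corner, which is the only corner that can leave $\F_\nu$.

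The gap is fixable, and the fix is precisely the observation your write-up omits: in your configuration the southeast corner of the rectangle spanned by $p$ and $q$ is the pointed cell $r'$ itself (it lies in $p$'s row and in $q$'s column), hence belongs to $\A_\nu$; since the height of $\nu$ is weakly increasing, containment of the southeast corner already forces the whole rectangle into $\F_\nu$, and then $p\ncomp q$ as desired. (The same remark is implicitly needed in your earlier claim that a crossing produces incompatible flanking points; there the empty cell itself plays the role of the southeast corner, so that step is fine.) You should also dispose of the second case of your dichotomy --- no point of $T$ above $p$ in its column --- by the column-wise symmetric argument: the topmost point $r'$ of $T$ below $p$ in that column has, by the tableau axioms, a point $q$ of $T$ to its left in its row, and again $r'$ is the southeast corner of the rectangle spanned by $q$ and $p$. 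With these repairs your proof is complete, and it has the merit of being self-contained, avoiding both the cited enumeration of points of tree-like tableaux and the appeal to Lemma~\ref{lem:numberofnodes} used in the paper.
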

\begin{proof}
By associating each cell with its southeast corner, we can translate between cells of the Ferrers diagram bounded by $\sfE\nu\sfN$ and lattice points of the Ferrers diagram bounded by $\nu$. Under this correspondence, properties \ref{it:deftreelike1}, \ref{it:deftreelike2}, and \ref{it:deftreelike3} defining tree-like tableaux become the $\nu$-tree properties~\eqref{it:corner}, \eqref{item:nonemptycolumn}, \eqref{item:nonemptyrow} and~\eqref{it:tree-like} from Lemma~\ref{lem:vtreeproperties}. This proves that every $\nu$-tree is a tree-like tableau. To see that it is non-crossing, note that if there was an empty cell with both a point above it and a point to its left, these points would be $\nu$-incompatible.

Conversely, the non-crossing property implies that the set of points of a non-crossing tree-like tableau are $\nu$-compatible. Indeed, if there is a couple of incompatible points, the southeast corner of the smallest rectangle containing them must be empty (by property~\eqref{it:tree-like}), and hence induces a crossing. So it only remains to prove the maximality to deduce that each non-crossing tree-like tableau is a $\nu$-tree. The number of points in a tree-like tableau is always one less than the half-perimeter of the tableau (see~\cite[pg.~5]{AvalBoussicaultNadeau2013}), which is the number of lattice points of $\nu$. In Lemma~\ref{lem:numberofnodes} we will prove that all $\nu$-trees have as many nodes as the number of lattice points of $\nu$, and hence non-crossing tree-like tableaux must be maximal.
\end{proof}

We say that two $\nu$-trees $T$ and~$T'$ are related by a \defn{right rotation} if $T'$ can be obtained by exchanging an element $q\in T$ by an element $q'\in T'$ 
as illustrated in Figure~\ref{fig:rotation}, where both $p$ and~$r$ belong to $T$ and~$T'$, and no further nodes of~$T$ or~$T'$ lie along the solid lines. The inverse operation is called a \defn{left rotation}. 

\begin{figure}[htbp]
	\begin{overpic}[width=0.4\textwidth]{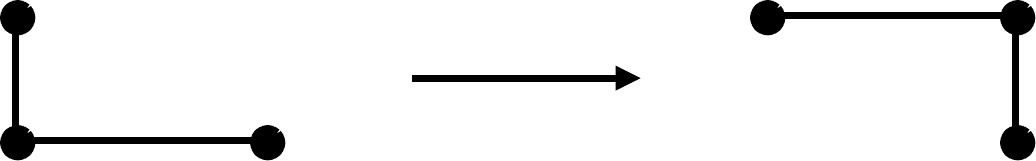}
		\put(-5,13){$p$}
		\put(-5,0){$q$}
		\put(29,0){$r$}
		\put(68,13){$p$}
		\put(101,13){$q'$}
		\put(101,0){$r$}
	\end{overpic}
\caption{Right rotation operation on $\nu$-trees.}
\label{fig:rotation}
\end{figure}

\begin{definition}
 The \defn{rotation poset} of $\nu$-trees is the partial order on the set of $\nu$-trees defined by the covering relations $T<T'$ whenever $T'$ is obtained from $T$ by a right rotation. 
\end{definition}

\begin{theorem}
\label{thm:rotationlattice}
The rotation poset of $\nu$-trees is a lattice. 
\end{theorem}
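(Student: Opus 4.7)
The plan is to follow the Huang–Tamari style ``simple proof of the lattice property'' alluded to in the introduction: encode each $\nu$-tree as an integer vector so that the rotation order agrees with componentwise order, and then check directly that this image is closed under componentwise minimum and maximum.

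First, I would assign to each $\nu$-tree~$T$ a $\nu$-\emph{bracket vector} $\sfb(T)\in\ZZ^N$ indexed by a natural set of positions along $\A_\nu$ (e.g.\ by rows of $\A_\nu$, or by the lattice points of~$\nu$). The entry $\sfb(T)_i$ should record, for position~$i$, the extremal visible node of~$T$ — in analogy with the classical Huang–Tamari bracket vector of a binary tree. The tree-like property of Lemma~\ref{lem:vtreeproperties}\eqref{it:tree-like}, together with items \eqref{item:nonemptycolumn}–\eqref{item:nonemptyrow}, should let one reconstruct $T$ row by row from $\sfb(T)$, so that $T\mapsto \sfb(T)$ is injective.

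Next I would analyse what a right rotation does to the bracket vector. Looking at Figure~\ref{fig:rotation}, replacing $q$ by $q'$ affects only the positions along the rectangle bounded by $p$, $q$, $q'$, $r$, and on those positions the new value is strictly larger than the old one. Hence if $T\lessdot T'$ is a covering relation in the rotation poset then $\sfb(T)\le \sfb(T')$ componentwise, so the transitive closure of rotations is a refinement of the componentwise order on the image. A parallel argument, comparing two bracket vectors that differ in a minimal ``staircase'' of coordinates, shows that conversely any componentwise cover inside the image corresponds to a single rotation, so the two orders coincide.

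The main step — and the main obstacle — is then to characterise the image $\{\sfb(T):T\text{ a $\nu$-tree}\}$ by a system of inequalities of the form ``$\sfb_i\le\sfb_j$ whenever positions $i,j$ stand in a prescribed configuration in $\A_\nu$'', and to verify that such a system is stable under coordinatewise $\min$ and $\max$. Once this is done, the image is a finite subposet of $(\ZZ^N,\le)$ closed under componentwise meet and join, and hence automatically a lattice, whose structure pulls back via $\sfb$ to a lattice structure on the rotation poset of $\nu$-trees. Formally this is carried out later in the paper as Theorem~\ref{thm:bracketlattice} and Proposition~\ref{prop:meet}; the delicate point is to find an inequality description tight enough that the componentwise minimum of two valid vectors is again realised by an actual $\nu$-tree, rather than by a vector violating one of the $\nu$-compatibility constraints.
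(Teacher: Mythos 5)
Your strategy is essentially the paper's \emph{second} route to this theorem (Section~\ref{sec:brackets}): encode each $\nu$-tree by a bracket vector via the in-order traversal, show that right rotations increase exactly one entry (Corollary~\ref{lem:bracketvectors_properties1}), that componentwise comparability implies reachability by rotations (Lemma~\ref{lem:bracketvectors_properties2}), and characterize the image (Definition~\ref{def:bracketvectors}, Proposition~\ref{prop:bracketvectors}). (The paper's first proof is different: the flushing bijection of Proposition~\ref{prop:Tamari_rotationtrees} identifies the rotation poset with $\Tam{\nu}$, and the lattice property is then imported from Pr\'eville-Ratelle and Viennot, Theorem~\ref{thm:PRVlattice}.) However, your key closure step contains a genuine error.

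The set of $\nu$-bracket vectors is \emph{not} closed under componentwise maximum, so the final step of your plan fails as stated. The correct characterization of the image involves, besides the fixed values and the bounds $\bmin_i\le b_i\le n$, the $121$-avoidance condition~(\ref{def:bracketvectors3_alternative}); this condition is preserved by componentwise minimum (hence Proposition~\ref{prop:meet}) but not by componentwise maximum: for the two vectors in Figure~\ref{fig:meetjoin}, the componentwise maximum is $(3,1,0,1,4,3,2,3,4,4)$, which contains the forbidden pattern $3\,4\,3$, and indeed the join shown there differs from the componentwise maximum. Relatedly, the characterization you hope for, by pairwise inequalities of the form $\sfb_i\le\sfb_j$ for prescribed pairs of positions, cannot exist: such a system is stable under both $\min$ and $\max$ and would force the poset to be a distributive lattice, whereas the Tamari lattice (already the classical case $\nu=(\sfN\sfE)^n$, $n\ge 3$) is not distributive. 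The gap is repairable without abandoning your approach: once the rotation order is identified with the componentwise order on bracket vectors and meets are given by componentwise minima, the lattice property follows either from the general fact that a finite poset with a top element ($\Tmax$) in which all pairwise meets exist is a lattice, or, as the paper does, by producing joins through the reflection duality $T\vee T'=\reverse{\reverse T \wedge \reverse T'}$ of Corollary~\ref{cor:combinatorialduality}. As written, though, the claim of closure under componentwise maximum is false and the proof is incomplete at exactly the point you flag as ``the main step.''
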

 
We will give two proofs of this result in Sections~\ref{sec:vTamari_rotationlattice} and~\ref{sec:brackets}. 

\begin{example}[Complete binary trees]
For the path $\nu=(\sfN\sfE)^n$, $\nu$-trees coincide with complete binary trees with $n$ internal nodes, as illustrated in Figure~\ref{fig:binarytrees}. The rotation coincides with the usual rotation on complete binary trees. The rotation lattice of $\nu$-trees is therefore the classical Tamari lattice.

\begin{figure}[htbp]
\includegraphics[width= 0.8\textwidth]{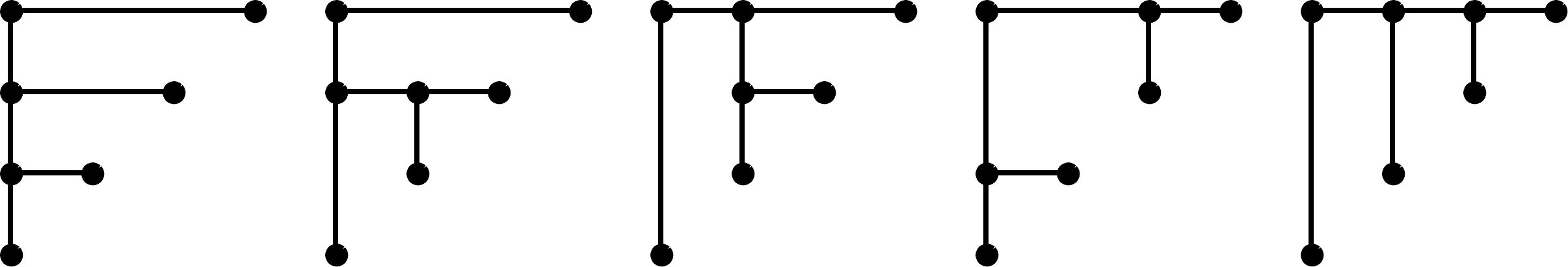}
\caption{Complete binary trees seen as $\nu$-trees for $\nu=(\sfN\sfE)^n$.}
\label{fig:binarytrees}
\end{figure}

\end{example}

We present now some properties of $\nu$-tree rotations that will be useful later. For this, we define the \defn{minimal $\nu$-tree} $\Tmin$ as the subset of $\A_\nu$ containing all the points on the left most column, together with all ending points of the east steps of~$\nu$. The \defn{maximal  $\nu$-tree} $\Tmax$ is the subset of $\A_\nu$ containing all the points on the top most row, and all the starting points of the north steps of $\nu$. These are clearly $\nu$-trees, with the property that $\Tmin$ admits no left rotation and $\Tmax$ admits no right rotation; they are shown in Figure~\ref{fig:Tamari35} (middle) as the minimal and maximal elements of the lattice.

The following lemma is the special case of $k=1$ in~\cite[Lemma~3.3]{rubey_maximal_2012}, although without a proof there. 
\begin{lemma}
A rotation of a $\nu$-tree is also a $\nu$-tree. 
\end{lemma}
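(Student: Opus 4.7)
I would prove the statement for right rotations; the left rotation case follows by reversing the argument. Given a $\nu$-tree $T$ undergoing a right rotation, fix coordinates so that $p = (x_1, y_2)$, $q = (x_1, y_1)$, $r = (x_2, y_1)$, and $q' = (x_2, y_2)$ with $x_1 < x_2$ and $y_1 < y_2$, and set $T' := (T \setminus \{q\}) \cup \{q'\}$. The plan is to verify three things in turn: that $q' \in A_\nu$, that $T'$ is pairwise $\nu$-compatible, and that $T'$ is maximal.

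The first point is immediate: since $r \in A_\nu$, the path $\nu$ reaches column $x_2$ at height at most $y_1$, hence weakly below $y_2$, so $q' \in A_\nu$. Combining this with the monotonicity of $\nu$, one sees that the entire closed rectangle $R = [x_1, x_2] \times [y_1, y_2]$ lies inside $F_\nu$; I would record this fact at the outset, as it is used repeatedly.

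For pairwise compatibility, it suffices to show that $q'$ is $\nu$-compatible with every $s \in T \setminus \{q\}$, by a case analysis on the position of $s$ relative to $q'$. If $s$ is northwest or southeast of $q'$, compatibility is automatic. If $s$ is northeast of $q'$, then $s$ is also northeast of $p$, and the compatibility of $\{p, s\} \subset T$ forces the height of $\nu$ at column $s_x$ to strictly exceed $y_2$, which certifies that the rectangle spanned by $s$ and $q'$ is not entirely in $F_\nu$. If $s$ is southwest of $q'$, I would split on $s_y$. When $s_y < y_1$, the analogous argument with witness $r$ applies. When $s_y \ge y_1$, one must have $s_x < x_1$: otherwise $s$ would lie in $R$, which together with $R \subseteq F_\nu$ and the hypothesis that no further nodes of $T$ lie on the left or bottom sides of $R$ would force $\{s, q\}$ to be $\nu$-incompatible, contradicting $s \in T$. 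In this remaining sub-case $s$ is southwest of $p$, so compatibility with $p$ gives that the height of $\nu$ at $x_1$ exceeds $s_y$; by monotonicity the height of $\nu$ at $x_2$ also exceeds $s_y$, yielding compatibility of $\{s, q'\}$.

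Finally, maximality of $T'$ follows from a cardinality argument: $|T'| = |T|$ equals the number of lattice points of $\nu$ by Lemma~\ref{lem:numberofnodes}, and a pairwise $\nu$-compatible set of this cardinality cannot be strictly extended, since any extension could itself be completed to a $\nu$-tree of unlawful size. The main technical obstacle is the southwest sub-case in the pairwise compatibility step: the naive witness $r$ fails when $s$ lies strictly west of $R$, and one must invoke monotonicity of $\nu$ together with the higher witness $p$ to transport the critical inequality from column $x_1$ to column $x_2$.
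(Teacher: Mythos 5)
Your verification that a right rotation preserves pairwise $\nu$-compatibility is correct and is essentially the paper's own argument: both proofs use $p$ and $r$ as witnesses together with the observation that the rectangle $R$ spanned by $p,q,r,q'$ lies inside $F_\nu$, and your delicate sub-case $s_x<x_1$ (handled via the witness $p$ and the monotonicity of $\nu$) is the paper's case ``$s$ northwest of $q$''.

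The problem is the maximality step. You invoke Lemma~\ref{lem:numberofnodes} (all $\nu$-trees have the same number of nodes, equal to the number of lattice points of $\nu$), but in this paper that lemma comes \emph{after} the present one and its proof depends on it: it shows that any $\nu$-tree other than $\Tmax$ admits a right rotation and then iterates rotations until $\Tmax$ is reached, which requires already knowing that each rotation of a $\nu$-tree is again a $\nu$-tree --- maximality included, since the iteration re-applies the structural properties of Lemma~\ref{lem:vtreeproperties} to the intermediate trees. As written, your cardinality argument is therefore circular; it would only be legitimate with an independent proof of the equicardinality statement (the paper remarks that it also follows from Jonsson's Theorem~10, but that is an external input you do not establish). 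The paper avoids this entirely and proves maximality using only the compatibility half you already have: if some $s\notin T'$ were $\nu$-compatible with all of $T'$, then applying one left rotation to $T'\cup\{s\}$ (when $s$ lies outside the rectangle $R$), or two left rotations (when $s$ lies on the row or column of $q'$, the only positions inside $R$ compatible with $q'$; see Figure~\ref{fig:maximality}), yields a pairwise $\nu$-compatible set strictly containing $T$, contradicting the maximality of $T$. Replacing your cardinality step by this argument, or by an independent count of the nodes of a $\nu$-tree, closes the gap.
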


\begin{proof}
Let $T$ be a $\nu$-tree and $T'=T\ssm\{q\}\cup \{q'\}$ be obtained from $T$ by a right rotation involving $p,q,r\in T$, as in Figure~\ref{fig:rotation}.

{\it Rotations preserve compatibility:}
Assume there is some $s\in T\cap T'$ such that $s\ncomp q'$. If $s$ is due northeast of $q'$ then $s\ncomp p$ in~$T$. 
If~$s$ is due southwest of $q'$, there are three cases to consider. \begin{enumerate*}[label=(\roman*)] \item Since $s,q\in T$ are $\nu$-compatible,~$s$ cannot lie due northeast or southwest of $q$; \item $s$ cannot lie due northwest of $q$, for then $s\ncomp p$; \item finally, if $s$ is due southeast of $q$, then $s\ncomp r$. \end{enumerate*}
Either way we get a contradiction, so $T'$ consists of pairwise $\nu$-compatible points. The proof for left rotation is similar.

{\it Rotations preserve maximality:}
Assume $T'$ is not maximal, so there is some $s\in \A_\nu$ with $s\notin T'$ that is $\nu$-compatible with every element in $T'$. \begin{enumerate*}[label=(\roman*)] \item If $s$ is not in the rectangle with vertices $p,q,r,q'$, we can obtain $T\cup s$ by applying a left rotation to $T'\cup s$. 
 \item If $s$ lies on the rectangle with vertices $p,q,r,q'$, it must necessarily lie on the same row or column of $q'$, as otherwise $s\ncomp q'$. 
Say that $s$ lies on the same row as $q'$ (the other case being analogous). Then we can obtain $T\cup s'$ from $T'\cup s$ by applying two left-rotations as in Figure~\ref{fig:maximality}.\end{enumerate*} In both situations we get a contradiction on the maximality of $T$ because left rotations preserve $\nu$-compatibility.

\begin{figure}[htbp]
	\begin{overpic}[width=\textwidth]{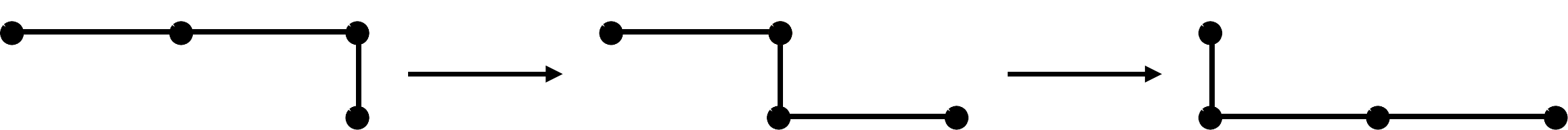}
		\put(0,8){$p$}
		\put(11,8){$s$}
		\put(22,8){$q'$}
		\put(22,-2){$r$}
		\put(38,8){$p$}
		\put(49,8){$s$}
		\put(49,-2){$s'$}
		\put(60.5,-2){$r$}
		\put(76.5,8){$p$}
		\put(76.5,-2){$q$}
		\put(87.5,-2){$s'$}
		\put(98.5,-2){$r$}
	\end{overpic}
\caption{Producing $T\cup s'$ from $T'\cup s$ via two left rotations.}
\label{fig:maximality}
\end{figure}
\end{proof}

The first statement of the following lemma is the special case $k=1$ of~\cite[Theorem~3.8]{rubey_maximal_2012}. The second statement concerning the cardinality of $\nu$-trees follows from~\cite[Theorem~10]{Jonsson2005}.

\begin{lemma}\label{lem:numberofnodes}
The rotation poset of $\nu$-trees is connected. In particular, all $\nu$-trees have the same number of nodes, which equals the number of lattice points on $\nu$. 
\end{lemma}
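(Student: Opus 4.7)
The cardinality assertion follows from Lemma~\ref{lem:vtreeproperties} alone, independently of connectedness. Writing $\nu$ with $a$ north and $b$ east steps, so that $A_\nu$ spans $b+1$ columns and $a+1$ rows, items~\eqref{item:nonemptycolumn} and~\eqref{item:nonemptyrow} guarantee that every column and every row of $A_\nu$ contains at least one $T$-node. Hence the number of $T$-nodes possessing a $T$-node directly above in the same column equals $|T|-(b+1)$ (one topmost node per column is excluded), while the number with one directly to the left equals $|T|-(a+1)$. By~\eqref{it:tree-like}, each non-root $T$-node has exactly one of these two features, so $|T|-1=\bigl(|T|-(b+1)\bigr)+\bigl(|T|-(a+1)\bigr)$, yielding $|T|=a+b+1=|\nu|$.

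For connectedness I would use the weight $w(T):=\sum_{(x,y)\in T}(x+y)$. By Figure~\ref{fig:rotation}, a right rotation replaces the southwest corner $q$ of the rotation rectangle by its northeast corner $q'$, so $w$ strictly increases. Thus the rotation poset is a finite acyclic graph, and any maximal chain of right rotations from a given $\nu$-tree terminates at some tree $T^*$ admitting no further right rotation.

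The core step is to identify $T^*=\Tmax$. Assume $T^*\neq\Tmax$; I would derive a contradiction by exhibiting a right rotation on $T^*$. Suppose some row $y<a$ contains at least two $T^*$-nodes: its leftmost node $p_1$ has no west-neighbor and, being non-root, must have a north-parent by~\eqref{it:tree-like}, so together with its east-neighbor $p_2$ it forms an L-shape in $T^*$ marking a candidate rotation rectangle. Among all such L-shapes, selecting one with minimum rectangle area forces the clearing conditions on its top and right edges (any obstructing node would yield a strictly smaller L-shape), exhibiting a valid right rotation and contradicting the hypothesis on $T^*$. Hence each row $y<a$ contains exactly one $T^*$-node; combined with the cardinality $|T^*|=a+b+1$, the top row of $A_\nu$ is entirely contained in $T^*$. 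Pairwise compatibility of the unique row-$y$ node $(x_y,y)$ with every top-row node $(x',a)$ satisfying $x'>x_y$ then requires the corner $(x',y)$ of their rectangle to lie outside $A_\nu$, which pins $(x_y,y)$ as the starting point of the north step of $\nu$ at height $y$. Thus $T^*=\Tmax$, so iterated right rotations from any $\nu$-tree reach $\Tmax$, establishing connectedness.

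The main obstacle will be verifying, at the minimum-area L-shape, that the clearing conditions along the top and right edges of the rotation rectangle hold, and performing the subsequent compatibility-based identification of each lone lower-row node with a north-step starting point of $\nu$. With these in place, rotation invariance of $|T|$ combined with the cardinality $|\Tmin|=|\nu|$ completes the second statement.
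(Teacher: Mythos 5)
Your counting proof of the cardinality statement is correct and is a genuinely different route from the paper's: the paper deduces the cardinality only after connectedness (every tree reaches $\Tmax$ by right rotations, which preserve cardinality), whereas you get $|T|=a+b+1$ directly from items~\eqref{item:nonemptycolumn}, \eqref{item:nonemptyrow} and~\eqref{it:tree-like} of Lemma~\ref{lem:vtreeproperties}, with no rotations at all; this is a nice simplification, and it is legitimately used later in your identification of the terminal tree. Your connectedness scheme (a strictly increasing weight, a terminal tree $T^*$, and the identification $T^*=\Tmax$) also differs in detail from the paper, which locates a rotation at the first column whose highest point is missing and uses maximality of $T$ to supply the missing corner $q$ of the rotation rectangle.

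The step you flag is, however, a genuine gap as written. Choosing the L-shape of minimum rectangle area does not force the clearing conditions: an obstructing node on the open top or right edge need not give rise to any smaller L-shape, since it may be the unique node of its row, it may fail to be the leftmost node of its row, or its row may be the top row, which you excluded from consideration; in each of these cases the minimality of area yields no contradiction. The fix is that no minimality is needed, because the clearing conditions hold for \emph{every} such L-shape by $\nu$-compatibility. Write $q=(x_q,y_q)$ for the leftmost node of the chosen row, $r=(x_r,y_q)$ for its east-neighbour and $p=(x_q,y_p)$ for its north-parent, and suppose $u\in T^*$ lies strictly inside the top edge, $u=(x,y_p)$ with $x_q<x<x_r$, or the right edge, $u=(x_r,y)$ with $y_q<y<y_p$. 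Then $u$ is strictly northeast of $q$, and the southeast corner of the smallest rectangle containing $q$ and $u$ lies in row $y_q$ weakly to the left of $r$; since $r\in\A_\nu$ and the lower boundary of the Ferrers shape $\F_\nu$ has weakly increasing height as a function of the column, this corner lies in $\F_\nu$, hence the whole rectangle does, so $u\ncomp q$, contradicting $u,q\in T^*$. (The same observation shows $q'=(x_r,y_p)\in\A_\nu$, so the rotated point exists.) With this replacement the rotation at $(p,q,r)$ is valid, and the rest of your argument — exactly one node per row $y<a$, full top row by the cardinality count, and the compatibility argument pinning each lone node to the start of the north step of $\nu$ at its height (the boundary case $x_y=b$ being vacuous but still forced) — goes through, so your proof is complete once this step is repaired.
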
   

\begin{proof}
Note that if $T$ contains all the points on the top row of $\A_\nu$ then $T=\Tmax$. Assume that $T\neq \Tmax$ and let $j$ be the first column whose highest point does not belong to $T$ (we index the columns from left to right, and the rows from bottom to top). Note that $j\geq 2$ because the top-left corner (i.e. the root of $T$) belongs to every $\nu$-tree (cf. Lemma~\ref{lem:vtreeproperties}). Let~$p'\in T$ be the point in column $j-1$ on the topmost row, and $r=(i,j)\in T$ be the highest point of $T$ in column $j$ (which is non-empty by item~(\ref{item:nonemptycolumn}) in Lemma~\ref{lem:vtreeproperties}). We claim that that $q=(i,j-1)\in T$. Indeed, assume there is a point $t\in T$ such that $t\ncomp q$. If $t$ lies due southwest of $q$, then $t\ncomp p'$ as well. If $t$ lies due northeast of $q$, it must lie due northeast of $r$ too, and this would mean that $t\ncomp r$. Both cases yield contradictions.

Let $p$ be the next point of $T$ due north of $q$ (which may equal $p'$). Then the points $p,q,r\in T$ are in the situation of Figure~\ref{fig:rotation}, so we may right-rotate $q$ to a point $q'$ due north of $r$. Thus, it is always possible to apply a right rotation to a non-maximal tree. Since right rotation is an acyclic operation and the number of $\nu$-trees is finite, we eventually reach $\Tmax$ by a finite sequence of right rotations. The number of nodes on $\Tmax$ is clearly equal to the number of lattice points on~$\nu$.
\end{proof}

\begin{lemma}\label{lem:flipsequalrotations}
Two $\nu$-trees differ by a single element if and only if they are related by a rotation.
\end{lemma}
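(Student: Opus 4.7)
The reverse direction is immediate from the definition of a rotation. For the forward direction, suppose $T'=(T\setminus\{q\})\cup\{q'\}$ with $q\neq q'$. By the maximality of $T$, the set $T\cup\{q'\}$ cannot be pairwise $\nu$-compatible, so $q$ and $q'$ must be $\nu$-incompatible. After possibly swapping the roles of $T$ and $T'$ (which exchanges a right rotation with a left one), I may assume $q'$ is strictly northeast of $q$; write $q=(x_1,y_1)$, $q'=(x_2,y_2)$, and set $p=(x_1,y_2)$, $r=(x_2,y_1)$ for the other two corners of the rectangle $R=[x_1,x_2]\times[y_1,y_2]\subseteq F_\nu$. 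A short ``rectangle analysis'' will show that any node of $T$ or $T'$ lying in $R$ is a corner of $R$: any other point of $R$ would form a $\nu$-incompatible pair with $q$ or $q'$ via a sub-rectangle of $R$. Using that the height function $h_\nu$ of $\nu$ is non-decreasing in $x$, an analogous argument shows that no node of $T$ (and hence no node of $T'$ other than $q'$) lies in column $x_2$ strictly above~$y_2$, since any such node would be $\nu$-incompatible with $q$ via the extended rectangle.

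The next step is to prove $r\in T$. Suppose not. By Lemma~\ref{lem:vtreeproperties}~(\ref{item:nonemptycolumn}) and the previous observation, the topmost node of column $x_2$ in $T$ is some $d=(x_2,y_d)$ with $y_d<y_1$. Since $d$ has no node of $T$ above it in its column, Lemma~\ref{lem:vtreeproperties}~(\ref{it:tree-like}) forces a node $s$ of $T$ to the left of $d$ in row $y_d$; as $y_d\neq y_1$, we have $s\neq q$, so $s\in T'$ as well. But in $T'$ the node $q'$ sits above $d$ in column $x_2$, so $d$ has both a node above it and a node to its left in $T'$, contradicting Lemma~\ref{lem:vtreeproperties}~(\ref{it:tree-like}).

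To prove $p\in T$, apply Lemma~\ref{lem:vtreeproperties}~(\ref{it:tree-like}) to $q$ in $T$. If $q$ has a node $p_b$ to its left in row $y_1$, then $q$ has no node above it in column $x_1$, so $p\notin T$. Since $r\in T$ by the previous step, in $T'$ the node $r$ has both a node above it in column $x_2$ (namely $q'$) and a node to its left in row $y_1$ (namely $p_b$, which is in $T'$ because $p_b\neq q$), contradicting Lemma~\ref{lem:vtreeproperties}~(\ref{it:tree-like}) for $T'$. If instead $q$ has a node $p_a=(x_1,y_a)$ above it in $T$, the rectangle analysis forces $y_a\geq y_2$; if $y_a=y_2$ then $p=p_a\in T$ directly. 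Otherwise $y_a>y_2$ and $p\notin T$, so the tree-parent of $q'$ in $T'$ cannot lie above $q'$ in column $x_2$ (no such node exists in $T'$) nor equal $p$, and must therefore be some $\tilde l=(x',y_2)\in T'$ with $x'<x_1$. By monotonicity of $h_\nu$, the rectangle from $\tilde l$ to $p_a$ lies in $F_\nu$, making $\tilde l$ and $p_a$ $\nu$-incompatible in $T$, a contradiction.

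Once $p,r\in T\cap T'$ is established, the quadruple $(p,q,r,q')$ matches the right-rotation configuration of Figure~\ref{fig:rotation} verbatim, the absence of further nodes on the solid edges following from the rectangle analysis. The hardest step will be the asymmetric case analysis required to show $p\in T$: one must split on whether the tree-parent of $q$ lies above or to its left, and use the monotonicity of $h_\nu$ to extend the rectangle arguments beyond $R$.
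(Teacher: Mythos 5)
Your proof is correct, and its overall skeleton coincides with the paper's: maximality of $T$ gives $q\ncomp q'$, one passes to the rectangle $R$ spanned by $q$ and $q'$, shows that its two remaining corners $p,r$ belong to $T\cap T'$ and that no other nodes of either tree lie in $R$, and then reads off the configuration of Figure~\ref{fig:rotation}. Where you genuinely diverge is in the central claim $p,r\in T\cap T'$. The paper obtains it in one stroke from maximality: any $s\in T\cap T'$ with $s\ncomp p$ or $s\ncomp r$ would satisfy $s\ncomp q$ or $s\ncomp q'$, so $p$ and $r$ are compatible with all of $T$ and of $T'$ and hence lie in both. You instead force $p,r\in T$ by contradiction using the structural facts of Lemma~\ref{lem:vtreeproperties}: part~\eqref{item:nonemptycolumn} to produce the topmost node $d$ of column $x_2$ when $r\notin T$, and the parent dichotomy~\eqref{it:tree-like} applied to $d$, $r$, $q$ and $q'$ in $T$ and $T'$, supplemented by ``extended rectangle'' arguments that use the staircase shape of $F_\nu$ (closure of the region weakly above $\nu$ under north and west moves). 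I checked the case analysis, including the delicate subcase $y_a>y_2$, where the incompatibility $\tilde l\ncomp p_a$ does hold because the rectangle they span lies weakly northwest of the corner $p\in F_\nu$; so the argument goes through. The trade-off: the paper's route is shorter and needs only maximality and the definition of compatibility, while yours pins down the local tree structure explicitly (the parent of $q$ in $T$ and of $q'$ in $T'$ is $p$) at the price of more cases and reliance on Lemma~\ref{lem:vtreeproperties}, whose proof the paper omits. Two cosmetic points: you should note explicitly that $q,q',r,d$ are not the root (immediate, since each has a point of $A_\nu$ strictly north of it or lies outside the leftmost column) before invoking~\eqref{it:tree-like}, and the final sentence, written in the future tense as a plan for a step you have already carried out, should be deleted or rewritten.
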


\begin{proof}
The ``{if}'' direction holds by definition. For the ``{only if}'' direction, consider two $\nu$-trees $T$ and~$T'$ such that $T'=T\ssm q \cup q'$. It is clear that $q\ncomp q'$, for otherwise $T\cup q'$ would consist of pairwise $\nu$-compatible points, contradicting the maximality of $T$. We claim that the points $p$ and $r$, lying respectively on the northwest and southeast corners of the smallest rectangle containing $q,q'$, belong to both $T$ and~$T'$, and that no further points of $T$ or $T'$ lie on that rectangle. 

Indeed, one observes that the existence of any point $s\in T\cap T'$ with $s\ncomp p$ or $s\ncomp r$ would imply incompatibilities $s\ncomp q$ or $s\ncomp q'$. We leave the easy details to the reader. Hence, $p$ and $r$ must belong to $T$ and $T'$ by maximality.  Moreover, the rectangle $pqrq'$ must be empty because any point inside would be incompatible with $q$ or $q'$. From this it follows that $T$ and $T'$ are related by a rotation. 
\end{proof}

\section{The \texorpdfstring{$\nu$}{v}-Tamari lattice as a rotation lattice}
\label{sec:vTamari_rotationlattice}

In this section we show that the rotation lattice of $\nu$-trees is equivalent to the $\nu$-Tamari lattice of Pr\'eville-Ratelle and Viennot~\cite{PrevilleRatelleViennot2014}.

\subsection{\texorpdfstring{$\nu$}{v}-Tamari lattices}
\label{sec:vTamariLattices}
We identify \defn{lattice paths} that consist of a finite number of north and east unit steps with words on the alphabet $\{\sfN,\sfE\}$. 
Given a lattice path~$\nu$, a \defn{$\nu$-path} is a lattice path with the same endpoints as $\nu$ that is weakly above~$\nu$. The set of $\nu$-paths is endowed with a partial order which we now recall. 

\begin{definition}
The \defn{$\nu$-Tamari poset} \defn{$\Tam{\nu}$} on the set of $\nu$-paths is the transitive closure~$<_\nu$ of the covering relation~$\lessdot_\nu$ defined as follows:

Let $\mu$ be a $\nu$-path. For a lattice point $p$ on $\mu$ define the distance \defn{$\horiz_\nu(p)$} to be the maximum number of horizontal steps that can be added to the right of $p$ without crossing $\nu$. Given a \defn{valley} $p$ of $\mu$ (a point preceded by an east step $\sfE$ and followed by a north step $\sfN$) we let $q$ be the first lattice point in $\mu$ after $p$ such that $\horiz_\nu(q)=\horiz_\nu(p)$. We denote by $\mu_{[p,q]}$ the subpath of $\mu$ that starts at~$p$ and finishes at $q$, and consider the path $\mu'$ obtained from $\mu$ by switching $\sfE$ and $\mu_{[p,q]}$. 
The covering relation is defined to be $\mu \lessdot_\nu \mu'$.

\end{definition}

An example is illustrated in Figure~\ref{fig:Tamari35} (left). The case $\nu=(\sfN\sfE)^n$ yields the classical Tamari lattice.

In~\cite{PrevilleRatelleViennot2014}, Pr\'eville-Ratelle and Viennot proved several structural results about~$\Tam{\nu}$. In particular, they showed that it has the structure of a lattice.  

\begin{theorem}[Pr\'eville-Ratelle and Viennot {\cite{PrevilleRatelleViennot2014}}]
\label{thm:PRVlattice}
The $\nu$-Tamari poset is a lattice. 
\end{theorem}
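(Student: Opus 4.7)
The plan is to deduce Theorem~\ref{thm:PRVlattice} from Theorem~\ref{thm:rotationlattice} by exhibiting a poset isomorphism between the $\nu$-Tamari poset $\Tam{\nu}$ and the rotation poset of $\nu$-trees. Concretely, I would construct a bijection $\Phi:\mathcal{T}_\nu\to\mathcal{P}_\nu$ from $\nu$-trees to $\nu$-paths --- essentially the \emph{flushing} bijection announced as Proposition~\ref{prop_flushingbijection} --- and then check that it carries the rotation covering relation on $\nu$-trees onto the covering relation $\lessdot_\nu$ of Pr\'eville-Ratelle and Viennot.

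For the construction of $\Phi(T)$, I would proceed row by row in $\A_\nu$: after right-flushing the nodes of $T$ in each row $i$ (keeping only the rightmost node, together with the valley positions forced by Lemma~\ref{lem:vtreeproperties}), one reads off the horizontal position where row $i$ transitions into row $i-1$, and assembles these positions into a lattice path from the bottom-left to the top-right corner of $\A_\nu$. By Lemma~\ref{lem:numberofnodes}, every $\nu$-tree has the same number of nodes as lattice points of $\nu$, which matches the length of a $\nu$-path, so a counting argument together with the injectivity of $\Phi$ (obtained by recovering $T$ column by column using the function $\horiz_\nu$) will yield bijectivity.

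The core step is to verify that a single right rotation in $T$, exchanging $q$ for $q'$ in the rectangle with corners $p,r$ (Figure~\ref{fig:rotation}), corresponds under $\Phi$ to exactly one covering move in $\Tam{\nu}$. To do this I would show: (i) the $y$-coordinate of $q$ in $T$ corresponds to a valley $\tilde p$ of $\mu=\Phi(T)$; (ii) the rectangle condition $p,q,r\in T$ with no other nodes on the two solid segments translates, via right-flushing, into saying that the subpath $\mu_{[\tilde p,\tilde q]}$ used in the definition of $\lessdot_\nu$ is precisely the portion of $\mu$ bounded by the projections of $p$ and $r$; and (iii) the swap $\sfE\,\mu_{[\tilde p,\tilde q]}\mapsto \mu_{[\tilde p,\tilde q]}\,\sfE$ agrees exactly with the row-by-row re-flushing of $T\ssm\{q\}\cup\{q'\}$. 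The characterization of $q$ as the first return to the value $\horiz_\nu(\tilde p)$ will match the maximality of the rectangle in the rotation (no further nodes of $T$ lie inside it, by the argument of Lemma~\ref{lem:flipsequalrotations}).

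The main obstacle I anticipate is making the correspondence between the two covering relations completely tight, because $\lessdot_\nu$ is phrased through the external function $\horiz_\nu$ on paths while rotations are phrased combinatorially via $\nu$-incompatibilities on lattice points. Once this identification is done, the bijection $\Phi$ is a poset isomorphism $\Tam{\nu}\cong(\mathcal{T}_\nu,\leq)$, and Theorem~\ref{thm:rotationlattice} immediately yields that $\Tam{\nu}$ is a lattice, proving Theorem~\ref{thm:PRVlattice}.
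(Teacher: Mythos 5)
Your plan has the right ingredients, but as written it is circular. Theorem~\ref{thm:PRVlattice} is not proved in the paper at all: it is quoted from Pr\'eville-Ratelle and Viennot, and the paper's \emph{first} proof of Theorem~\ref{thm:rotationlattice} (Section~\ref{sec:vTamari_rotationlattice}) consists precisely of the flushing isomorphism (Proposition~\ref{prop_flushingbijection}, Theorem~\ref{thm:rotationlatticetrees}) \emph{combined with} Theorem~\ref{thm:PRVlattice}. So if you construct the isomorphism $\Phi$ and then invoke Theorem~\ref{thm:rotationlattice} as it is established in Section~\ref{sec:vTamari_rotationlattice}, you are assuming the very statement you set out to prove. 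The missing piece in your proposal is an argument for the lattice property of the rotation poset of $\nu$-trees that does not pass through $\Tam{\nu}$; citing Theorem~\ref{thm:rotationlattice} without specifying which of its two proofs you use does not supply it.

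The fix is available inside the paper and is what your argument implicitly needs: take the second, self-contained proof of Theorem~\ref{thm:rotationlattice} from Section~\ref{sec:brackets}. There the rotation poset of $\nu$-trees is identified with the poset of $\nu$-bracket vectors under componentwise order (Proposition~\ref{prop:bracketvectors}, Corollary~\ref{lem:bracketvectors_properties1}, Lemma~\ref{lem:bracketvectors_properties2}); meets exist because the componentwise minimum of two $\nu$-bracket vectors is again a $\nu$-bracket vector (Proposition~\ref{prop:meet}); and joins follow either from finiteness together with the maximum element $\Tmax$, or from the reflection $T\mapsto\reverse{T}$, none of which uses Theorem~\ref{thm:PRVlattice}. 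With that substitution your outline becomes a correct, non-circular proof of Pr\'eville-Ratelle--Viennot's theorem, in the Huang--Tamari style the paper itself advertises. Two smaller remarks: the covering-relation comparison you sketch is exactly part (3) of Proposition~\ref{prop_flushingbijection}, where the matching of the two cover relations rests on the identity $\horiz_\nu(p)=\hroot_T(\ol p)$ identifying the ``first return'' point with the parent of the rotated node, so that portion coincides with the paper; and note that the tree-to-path direction is the \emph{left}-flushing map $\lflush$, while right-flushing $\rflush$ goes from paths to trees --- your description swaps the two, though this is only a labelling slip.
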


\subsection{The rotation lattice of \texorpdfstring{$\nu$}{v}-trees is isomorphic to the \texorpdfstring{$\nu$}{v}-Tamari lattice}\label{sec:RotationlatticeTamarilatticeIsomorphism}
We present now a bijection that induces an isomorphism between the rotation lattice of $\nu$-trees and the $\nu$-Tamari lattice.
Theorem~\ref{thm:rotationlattice} is then a direct consequence of Theorem~\ref{thm:PRVlattice}.

\begin{theorem}\label{thm:rotationlatticetrees}
The $\nu$-Tamari lattice is isomorphic to the rotation lattice of $\nu$-trees.
\end{theorem}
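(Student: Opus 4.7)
The plan is to construct an explicit bijection $\Phi$ between $\nu$-trees and $\nu$-paths (this will be the \emph{flushing bijection} of Proposition~\ref{prop_flushingbijection}) that sends the right-rotation covering relation on $\nu$-trees to the Pr\'eville--Ratelle--Viennot covering relation $\lessdot_\nu$ on $\nu$-paths. Combined with Theorem~\ref{thm:PRVlattice}, which gives that $\Tam{\nu}$ is a lattice, such a cover-preserving bijection transports the lattice structure to the rotation poset of $\nu$-trees, giving the claimed isomorphism and, as a byproduct, the second proof of Theorem~\ref{thm:rotationlattice} promised after its statement.

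First, I would define $\Phi$ by flushing $T$ row by row. For each row $i$ of $\A_\nu$, let $r_i$ be the number of nodes of $T$ on row $i$. Reading these numbers from top to bottom and recording $r_i-1$ east steps followed by a north step (with the appropriate horizontal offset dictated by the shape of $\A_\nu$) produces a lattice path whose total number of east and north steps matches $\nu$ by Lemma~\ref{lem:numberofnodes}, and whose endpoints coincide with those of $\nu$. The fact that $\Phi(T)$ stays weakly above $\nu$ would follow from $\nu$-compatibility of $T$: at any column, the partial count of east steps read off is bounded by the number of nodes of $T$ strictly to the left, and the presence of the root (Lemma~\ref{lem:vtreeproperties}(\ref{it:corner})) together with $\nu$-compatibility prevents this count from exceeding what is allowed. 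The inverse $\Phi^{-1}$ is built by reading the east-run lengths of a $\nu$-path $\mu$, interpreting them as row counts, and greedily reconstructing from top to bottom the unique maximal $\nu$-compatible configuration that realises these counts (using items (\ref{item:nonemptyrow}) and (\ref{item:nonemptycolumn}) of Lemma~\ref{lem:vtreeproperties} to guarantee uniqueness at each row).

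The crucial and most delicate step is matching rotations with $\lessdot_\nu$-covers. Given a right rotation $T\to T' = (T\setminus\{q\})\cup\{q'\}$ at a rectangle with corners $p, q, r, q'$, I would track which valley of $\Phi(T)$ the node $q$ corresponds to; call it $\tilde q$. The key claim is that the partner point demanded by the definition of $\lessdot_\nu$---the first lattice point of $\Phi(T)$ strictly after $\tilde q$ with $\horiz_\nu$-value equal to $\horiz_\nu(\tilde q)$---is precisely the $\Phi$-image of $r$. This comes down to expressing $\horiz_\nu$ in terms of partial row/column counts of $T$ and using the emptiness of the interior of the rectangle $pqrq'$ (a consequence of $\nu$-compatibility and maximality). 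A direct comparison of row and column counts of $T$ and $T'$, which differ only in the two rows (respectively columns) containing $q$ and $q'$, then shows that $\Phi(T)\lessdot_\nu \Phi(T')$ exactly reproduces the Pr\'eville--Ratelle--Viennot cover move. This $\horiz_\nu$-partner calculation is the main obstacle; I expect it to be handled by a short case analysis on the shape of the staircase of $\Phi(T)$ between $\tilde q$ and the image of $r$.

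To close the argument, I would note that both the rotation poset of $\nu$-trees (which is generated by its covers, by Lemma~\ref{lem:flipsequalrotations} together with the connectivity statement of Lemma~\ref{lem:numberofnodes}) and $\Tam{\nu}$ are generated by their covers and have the same cardinality; hence a cover-preserving bijection $\Phi$ between them is automatically a poset isomorphism. Theorem~\ref{thm:PRVlattice} then transports the lattice structure and completes the proof of Theorem~\ref{thm:rotationlatticetrees}.
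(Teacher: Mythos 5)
Your overall strategy coincides with the paper's: your $\Phi$ is the flushing bijection of Proposition~\ref{prop_flushingbijection} (a $\nu$-tree is sent to the unique path with the same number of lattice points in each row), and the lattice structure is imported from Theorem~\ref{thm:PRVlattice}. However, the key claim in your cover-matching step is wrong as stated. Flushing matches points row by row: the $i$th point of the path lying in row $k$ corresponds to a node of $T$ in row $k$. The partner point demanded by $\lessdot_\nu$ lies strictly above the valley's row (the step right after a valley is north), whereas $r$ lies in the \emph{same} row as the rotated node $q$; so the partner cannot be the $\Phi$-image of $r$. It is the image of $p$, the parent of $q$, i.e.\ the northwest corner of the rectangle in Figure~\ref{fig:rotation}. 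Concretely, take $\nu=\sfN\sfE\sfN\sfE$ and $T=\Tmin$: the unique right rotation removes $q=(0,1)$, with $p=(0,2)$, $r=(1,1)$, $q'=(1,2)$; the corresponding path is $\mu=\nu$ itself, its valley is the point $(1,1)$ with $\horiz_\nu=0$, and the partner is $(2,2)$, which flushes to the node $(0,2)=p$, not to $r$. The case analysis you propose is therefore built on a false identity and would fail. The correct argument (the paper's) is: the valley corresponds to the leftmost node $\ol p$ of its row, whose parent sits directly above it in the same column, so $\horiz_\nu$ of the valley equals $\hroot_T$ of both $\ol p$ and its parent, while every node traversed between them has strictly larger $\hroot_T$; hence the partner corresponds to the parent, the cover transfers one lattice point from the valley's row to the parent's row, and this is exactly the rotation, which moves $q$ up to the row of $p$.

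Two smaller gaps. First, you assert that the row-count vector determines a unique maximal $\nu$-compatible set (your ``greedy reconstruction from top to bottom''); this is true but is precisely where the well-definedness work lies---the paper handles it through the explicit right-flushing construction and the identity $\horiz_\nu(p)=\hroot_T(\ol p)\geq 0$, which also yields the weakly-above-$\nu$ property that you only gesture at. Second, your closing argument invokes only that $\Phi$ sends rotations to covers; to get a poset isomorphism you need covers to correspond in \emph{both} directions, as in part~\ref{it:covertorotation} of Proposition~\ref{prop_flushingbijection}. Equal cardinality of the two posets does not supply the converse, since the target could a priori have covering pairs outside the image of the covering pairs of the source.
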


To describe the isomorphism, consider the following maps between $\nu$-paths and $\nu$-trees.
Let $\mu$ be a $\nu$-path.
We construct a $\nu$-tree $T=\rflush(\mu)$ by \defn{$\rflush$ight-flushing} the points in $\mu$ row-wise in the following way.
First label the points in $\mu$ in the order they appear along the path, traversed from southwest to northeast.
Starting from the bottom row and proceeding upwards, the points in a row are placed as rightmost as possible on the same row of $\A_\nu$, in the assigned order and avoiding $x$-coordinates that are \emph{forbidden} by previous flushed rows. The $x$-coordinates forbidden by a row are those of its right-flushed lattice points, excepting the last (i.e. leftmost) one. The collection of lattice points obtained by right flushing all the points in~$\mu$ constitutes the $\nu$-tree $\rflush(\mu)$. This is illustrated in Figure~\ref{fig:justification} (top). 

Symmetrically, the inverse map is defined as a row-wise \defn{$\lflush$eft-flushing} of the lattice points in a $\nu$-tree $T$. First we label the points of $T$ in the order they appear when traversed from bottom to top and from right to left. Starting from the bottom row and proceeding upwards, the points in a row are placed as leftmost as possible on the same row, in the assigned order and avoiding $x$-coordinates that are \emph{forbidden} by previous flushed rows. This time, the $x$-coordinates forbidden by a row are those of its left-flushed lattice points, excepting the last (i.e. rightmost) one. The resulting collection of lattice points forms the path $\mu=\lflush(T)$; see Figure~\ref{fig:justification} (bottom). 

Thus, the above maps give a one-to-one correspondence between lattice points in a $\nu$-path $\mu$ and nodes in a $\nu$-tree $T=\rflush(\mu)$. We will often decorate labels of lattice points in a $\nu$-tree with an overline, and write $p\leftrightarrow \ol p$ for this correspondence\footnote{We will be rather lax with the overlined notation for nodes of $\nu$-trees, and only employ it when the correspondence $p\leftrightarrow \ol p$ induced by flushing needs to be stressed.}. We show now that $\rflush$ and $\lflush$ are indeed well-defined inverse bijections.

\begin{figure}[htpb]
\centering 
\includegraphics[width=0.8\textwidth]{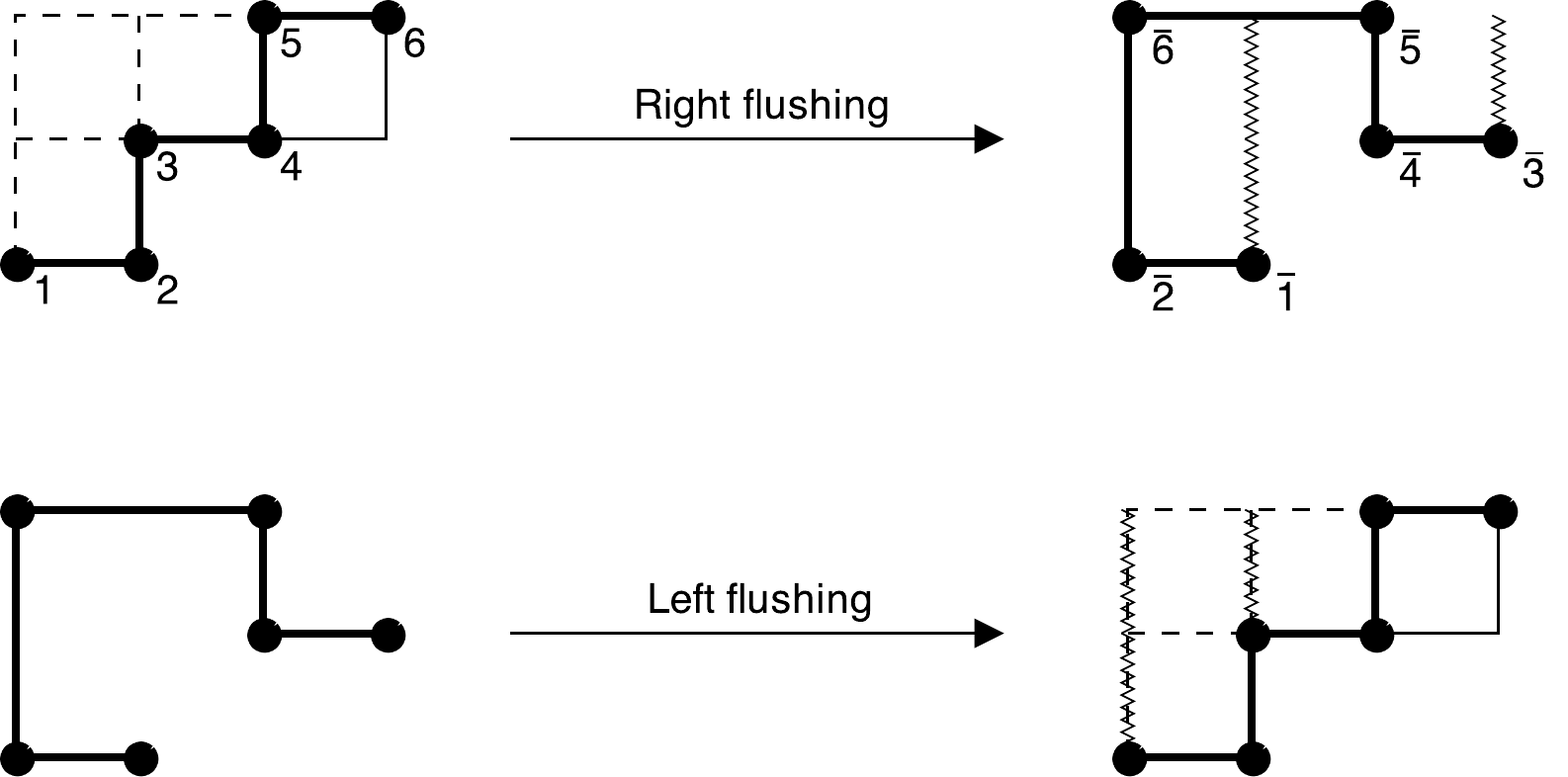}
 \caption{Bijection between $\nu$-paths and $\nu$-trees. Forbidden $x$-coordinates are indicated by vertical creased lines.}
 \label{fig:justification}
\end{figure}

\begin{proposition}\leavevmode
\label{prop_flushingbijection}
\begin{enumerate}[leftmargin=*, ref=\emph{(\arabic*)}]
 \item \label{it:welldefn} The right and left flushing maps $\rflush,\lflush$ are well-defined.
 \item \label{it:bijections} $\rflush$ and $\lflush$ are inverse bijective correspondences between $\nu$-paths and $\nu$-trees. 
 \item \label{it:covertorotation} Two $\nu$-paths $\mu,\mu'$ are related by a $\nu$-Tamari covering relation if and only if the $\nu$-trees $\rflush(\mu),\rflush(\mu')$ are related by rotation.
\end{enumerate}
\label{prop:Tamari_rotationtrees}
\end{proposition}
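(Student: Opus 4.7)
The plan is to prove items~\ref{it:welldefn} and~\ref{it:bijections} jointly by induction on the number of rows of~$\F_\nu$, and then derive~\ref{it:covertorotation} by tracking how a $\nu$-Tamari covering at a valley translates into a rotation on the tree side of the bijection. The base case, when $\F_\nu$ has a single row, is trivial: both $\mu$ and the unique $\nu$-tree consist of the full row of lattice points, and both flushings fix them. For the inductive step, I observe that the lattice points of $\mu$ on the bottom row form an initial segment (those visited before the first north step of $\mu$), and $\rflush$ places them at the rightmost available positions of the bottom row of $A_\nu$. Recording the forbidden $x$-coordinates produced thereby and removing the bottom row exhibits the remainder of $\rflush(\mu)$ as right-flushing on a smaller Ferrers diagram $\F_{\nu'}$, to which the inductive hypothesis applies. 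The output is a $\nu$-tree because rightmost placement combined with the forbidden columns prevents any two placed points from being $\nu$-incompatible (any such witnessing rectangle would contain a forbidden column in some row, violating the placement rule), and the total count of placed points equals the number of lattice points of $\nu$, which gives maximality in view of Lemma~\ref{lem:numberofnodes}. The analogous argument with ``rightmost'' swapped for ``leftmost'' handles $\lflush$, and a row-by-row comparison of the two placement rules shows that they are mutually inverse.

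For item~\ref{it:covertorotation}, let $\mu \lessdot_\nu \mu'$ be a covering at a valley $v$ of $\mu$, with $q$ the first later lattice point of $\mu$ satisfying $\horiz_\nu(q)=\horiz_\nu(v)$. Set $T=\rflush(\mu)$ and let $\bar v\in T$ be the image of $v$. I claim that there exist nodes $p,r\in T$ realizing the configuration of Figure~\ref{fig:rotation}, with $p$ directly above $\bar v$ in its column (no node of $T$ strictly between) and $r$ directly east of $\bar v$ in its row. The node $r$ arises from the $\sfE$ preceding $v$ in $\mu$, since the label immediately before $v$ along $\mu$ lies on the same row and is placed further right than $\bar v$ in $T$; the node $p$ arises from the $\sfN$ following $v$ together with the rightmost placement in the row above, which forces some label to occupy the first available position due north of $\bar v$. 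The $\horiz_\nu$ equation $\horiz_\nu(q)=\horiz_\nu(v)$ then identifies $q$ as the unique label in $\mu$ after $v$ whose image, after swapping $\sfE \leftrightarrow \mu_{[v,q]}$, occupies the NE corner $q'$ of the rectangle $p\bar v r q'$. This swap therefore induces exactly the right rotation that replaces $\bar v$ by $q'$, leaving $p$ and $r$ in place. The converse direction is obtained by reversing these identifications, starting from a rotation in $T$ and reading off the corresponding valley on $\lflush(T)$.

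The main obstacle is item~\ref{it:covertorotation}, specifically the identification of the rotation rectangle with the endpoints of $\mu_{[v,q]}$. The cleanest way forward is to first extract a lemma translating the quantity $\horiz_\nu$ computed along $\mu$ into column-position information in $\rflush(\mu)$; proven by the same row-by-row induction, this lemma reduces the matching between $q$ and the corner $q'$ to a direct definitional check. Items~\ref{it:welldefn} and~\ref{it:bijections} are comparatively routine once the row-by-row decomposition is in place, and the compatibility of rotations with a single-element swap (Lemma~\ref{lem:flipsequalrotations}) can be used as a sanity check, since both Tamari coverings and rotations amount to exchanging a single point.
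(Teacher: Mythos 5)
Your items~\ref{it:welldefn} and~\ref{it:bijections} are plausible but only sketched: the row-by-row induction requires verifying that after right-flushing the bottom row, deleting that row together with its forbidden columns really produces the Ferrers diagram $\F_{\nu'}$ of a shorter path with the remaining part of $\mu$ weakly above it, and the crucial claim that the two placement rules are ``mutually inverse'' is asserted rather than argued (note that both $\rflush(\mu)$ and $\lflush(T)$ depend only on the row counts of their input, so inverseness amounts to showing that a $\nu$-tree is determined by its row counts --- this is exactly the content that needs a proof, not a comparison of rules). The paper avoids the induction altogether by observing that the number of available columns when flushing the point $p$ (resp.\ the node $\ol p$) equals $\horiz_\nu(p)$ (resp.\ the number $\hroot_T(\ol p)$ of horizontal edges on the path from $\ol p$ to the root), which is nonnegative; this single identity $\horiz_\nu(p)=\hroot_T(\ol p)$ gives well-definedness in both directions at once.

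The genuine gap is in item~\ref{it:covertorotation}, and you flag it yourself: the ``lemma translating $\horiz_\nu$ into column-position information'' is precisely the heart of the statement, and it is never proved. Without it you have no argument that the rectangle you build corresponds to the Tamari cover: the rotation of $\bar v$ moves it to the row of the node $p$ lying directly north of $\bar v$, and what must be shown is that this row is the row of $q$, the \emph{first} lattice point after the valley with $\horiz_\nu(q)=\horiz_\nu(v)$. The paper gets this from the identity above plus the observation that $\bar v$ is the leftmost node of its row (the valley is the last point of its row in $\mu$, hence flushed furthest left), so every node traversed between $\bar v$ and its parent has strictly larger $\hroot_T$, making the parent the first later node with equal value --- matching the definition of $q$. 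Your justification for the existence and location of $p$ (``the $\sfN$ following $v$ together with the rightmost placement in the row above forces some label to occupy the first available position due north of $\bar v$'') is also incorrect as stated: the row immediately above need not place any point in $\bar v$'s column, and the nearest node due north may lie several rows up; the correct reason a node exists due north is Lemma~\ref{lem:vtreeproperties}\eqref{it:tree-like}, since $\bar v$ is leftmost in its row and is not the root. Finally, even granting the rectangle, you still need to check that $\rflush(\mu')$ coincides with $T$ outside the swapped node (it follows from the fact that $\mu'$ differs from $\mu$ only by moving one lattice point from row of $v$ to row of $q$, together with Lemma~\ref{lem:flipsequalrotations}); as written, ``this swap therefore induces exactly the right rotation'' skips that step.
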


\begin{proof}
\ref{it:welldefn}: First, we verify that right-flushing can proceed up to the top row, in the sense that there are always $x$-coordinates available to place a flushed point. Given a point $p$ on a $\nu$-path $\mu$ and the corresponding right-flushed point $\ol p$, this means that the difference between the width of the Ferrers diagram $\F_\nu$ at the row on which $\ol p$ lies and the number of $x$-coordinates forbidden prior to $\ol p$ is nonnegative. Indeed, we recognize the subtrahend as the number of east steps of $\mu$ prior to $p$, so the difference equals the quantity $\horiz_\nu(p)$ (cf. Section~\ref{sec:vTamariLattices}), which is nonnegative by construction.

Likewise, to left-flush a node $\ol p$ of a $\nu$-tree $T$ to a point $p$ via $\lflush$, we require again that the difference between the width of the Ferrers diagram $\F_\nu$ at the row on which $p$ lies and the number of $x$-coordinates forbidden prior to $p$ be nonnegative. We recognize this quantity as the number of horizontal edges in the unique path in $T$ from $\ol p$ to the root, and denote it by $\hroot_T(\ol p)$. Clearly $\hroot_T(\ol p)\geq 0$ for every $\ol p\in T$

We now check that $\rflush$ and $\lflush$ map to $\nu$-trees and $\nu$-paths, respectively. Given a $\nu$-path $\mu$, we claim that $T:=\rflush(\mu)$ is a $\nu$-tree. 
It is not difficult to see that, by construction of $\rflush$, the points of $T$ are pairwise $\nu$-compatible. 
Indeed, if $\overline p$ and $\overline q$ are incompatible, with $\overline q$ northeast of $\overline p$, then either there is a point in $\overline p$'s row in the same column as $\overline q$, or this $x$-coordinate was already forbidden by a previous row; in both cases, the $x$-coordinate of $\overline q$ is forbidden, which is a contradiction.
On the other hand, $T$ is maximal because, by Lemma~\ref{lem:numberofnodes}, the number of nodes in every $\nu$-tree equals the number of lattice points in $\nu$, which in turn equals the number of nodes in $\mu$.

Given a $\nu$-tree $T$, we claim that $\mu:=\lflush(T)$ is a $\nu$-path. Indeed, the correspondence $p\leftrightarrow \ol p$ between lattice points of $\mu$ and nodes of $T$ induces the equality $\horiz_\nu(p) = \hroot_T(\overline p)$, since the expressions for these quantities as differences agree, modulo exchanging $p$ and $\ol p$. Since $\hroot_T(\overline p)$ is non-negative for every $\ol p\in T$,  $\mu$ lies weakly above $\nu$.

\ref{it:bijections}: Injectivity of the right flushing map $\mu\mapsto \rflush(\mu)$ follows  because $\rflush(\mu)$ depends only on the number of lattice points of $\mu$ on each row, and this statistic uniquely determines the path. 
The surjectivity of $\rflush$ follows from the left flushing map $T\mapsto \lflush(T)$, which is the inverse of $\rflush$.

 \ref{it:covertorotation}: It remains to show that the covering relation on $\nu$-paths translates to rotation on $\nu$-trees, and vice versa.
 Let $p$ be a valley of $\mu$, $q$ be the first point on $\mu$ after $p$ such that $\horiz_\nu(q)=\horiz_\nu(p)$, and $\mu'$ be the path obtained from $\mu$ by switching the east step preceding $p$ and $\mu_{[p,q]}$. 
 Let $T=\rflush(\mu)$ and $\overline p,\overline q$ be the corresponding nodes of $p$ and $q$, respectively. We claim that $\overline q$ is the parent of $\overline p$ in $T$. The reason is that, since $\overline p$ is the leftmost node of its row, all the nodes with labels between $\overline p$ and its parent have larger horizontal distance to the root. Therefore, the parent $\overline q$ of $\overline p$ is the first node after $\overline p$ such that $\hroot_T(\overline q) = \hroot_T(\overline p)$.  
 The tree $T'=\rflush(\mu')$ is then obtained from $T$ by replacing $\overline p$ by a node on the same row as $\overline q$. Therefore $T'$ is necessarily equal to rotating $\overline p$ to the right in $T$.
 The same argument works to prove the converse correspondence.
\end{proof}

Theorem~\ref{thm:rotationlatticetrees} is a direct consequence of Proposition~\ref{prop:Tamari_rotationtrees}.
As a corollary, we get an alternative proof of one of the main results in~\cite{PrevilleRatelleViennot2014}. 

\begin{corollary}[{\cite[Theorem~2]{PrevilleRatelleViennot2014}}]\label{cor:combinatorialduality}
Let $\reverse \nu$ be the path obtained by reading $\nu$ backwards and replacing the east steps by north steps and vice versa. Then $\Tam{\nu}$ is isomorphic to the dual lattice of $\Tam{\reverse\nu}$.
\end{corollary}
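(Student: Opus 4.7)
The plan is to construct an explicit anti-isomorphism $\sigma$ between the rotation lattices of $\nu$-trees and $\reverse\nu$-trees; by Theorem~\ref{thm:rotationlatticetrees}, this will immediately yield the stated duality between $\Tam{\nu}$ and $\Tam{\reverse\nu}$. Writing $[0,w]\times[0,h]$ for the smallest rectangle containing $\nu$, I would set $\sigma(x,y)=(h-y,w-x)$. Geometrically, $\sigma$ is the reflection of the plane through the anti-diagonal of the bounding rectangle: equivalently, the composition of a $180^{\circ}$ rotation of the bounding box with the swap of the two axes.

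The first step is to verify that $\sigma$ restricts to a bijection $\A_\nu\to \A_{\reverse\nu}$. Since $\reverse\nu$ is obtained from $\nu$ by reversing the sequence of steps and exchanging $\sfE\leftrightarrow \sfN$, the map $\sigma$ sends the vertices of $\nu$ (in order) to the vertices of $\reverse\nu$ (in reverse order), and accordingly takes the region weakly above $\nu$ inside its $w\times h$ bounding box to the region weakly above $\reverse\nu$ inside its $h\times w$ bounding box.

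Next, I would show that $\sigma$ preserves $\nu$-compatibility. As an affine involution, $\sigma$ sends axis-aligned rectangles to axis-aligned rectangles, and the previous step guarantees that it matches ``lying inside $\F_\nu$'' with ``lying inside $\F_{\reverse\nu}$''. Although $\sigma$ swaps the relations ``$p$ northeast of $q$'' and ``$p$ southwest of $q$'', the $\nu$-compatibility condition is symmetric in these two orientations. Hence $\sigma$ restricts to a bijection between $\nu$-trees and $\reverse\nu$-trees.

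Finally, I would check that $\sigma$ is order-reversing for the rotation order. In the setup of Figure~\ref{fig:rotation}, a right rotation from $T$ to $T'$ exchanges a node $q$ at the southwest corner of an empty rectangle for a node $q'$ at its northeast corner. The map $\sigma$ sends the southwest corner of the rectangle to the northeast corner of its image, so the rotation from $\sigma(T')$ to $\sigma(T)$ now exchanges a southwest-corner node $\sigma(q')$ for a northeast-corner node $\sigma(q)$, i.e. it is again a right rotation. This shows that $T\lessdot T'$ if and only if $\sigma(T')\lessdot \sigma(T)$, so $\sigma$ is an anti-isomorphism of rotation lattices. The only real thing to keep track of is the coordinate bookkeeping; the underlying content is that the combinatorial construction of $\nu$-trees is naturally symmetric under the antidiagonal reflection, and this symmetry exchanges $\nu$ with $\reverse\nu$.
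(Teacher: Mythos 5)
Your proposal is correct and is essentially the paper's own argument: the paper also proves the corollary by reflecting $\nu$-trees across the antidiagonal through the root (your map $\sigma(x,y)=(h-y,w-x)$ is this reflection written in coordinates, up to translation), observing that this takes $\nu$-trees to $\reverse\nu$-trees and turns right rotations into left rotations, and then invoking Theorem~\ref{thm:rotationlatticetrees}. Your write-up just makes the coordinate bookkeeping and the compatibility/rotation checks explicit (only quibble: $\sigma$ is an involution only when $w=h$, though nothing in the argument needs this).
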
 

\begin{proof}
The $\reverse\nu$-trees can be obtained from $\nu$-trees by reflecting them on the line of slope minus 1 passing through the root. This reflecting operation, that we represent by~$T\rightarrow \reverse T$, maps $\Tam{\nu}$ to the dual of $\Tam{\reverse \nu}$ bijectively, because it turns right rotations on $T$ into left rotations on $\reverse T$.
\end{proof}

\begin{remark}
The correspondence between $\Tam{\nu}$ and the dual of $\Tam{\reverse\nu}$ can be made explicit at the level of lattice paths by the bijection 
$\lflush\ \circ \reverse{(\cdot)} \circ\ \rflush$.
We illustrate this composition in Figure~\ref{fig:justifiedpath}.
\end{remark}

\begin{figure}[htbp]
\begin{overpic}[width=\textwidth]{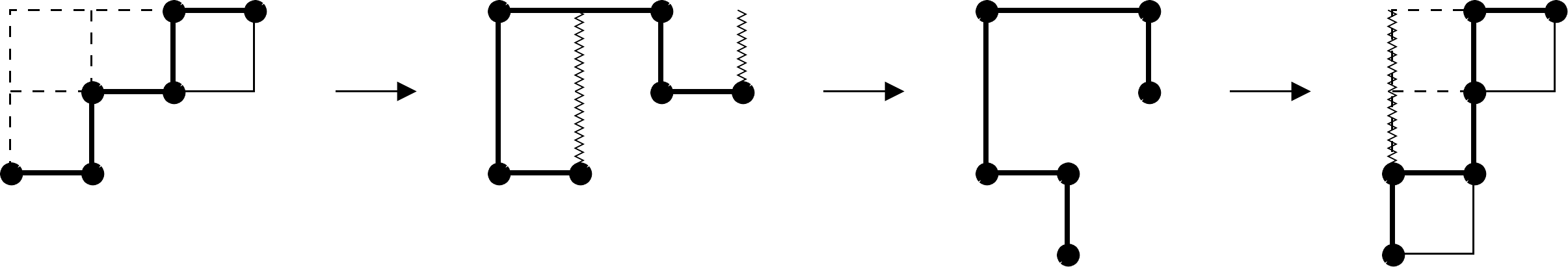}
 \put(22,12){{ $\rflush$}}
 \put(53,12){{ $\leftarrow$}}
 \put(79,12){{ $\lflush$}}
\end{overpic}
\caption{Bijection between $\nu$-paths and $\protect\reverse{\nu}$-Dyck paths via flushing.}
\label{fig:justifiedpath}
\end{figure}

\begin{remark}
In~\cite{tams2017}, the concept of $(I,J)$-trees was introduced in order to produce geometric realizations of $\nu$-Tamari lattices. The $\nu$-trees presented in this paper are equivalent to the \emph{grid representation} of the $(I,J)$-trees (cf.\ \cite[Remark~2.2]{tams2017}). 
\end{remark}

\section{The rotation lattice via bracket vectors}

\label{sec:brackets}

In this section, we provide a direct proof of the lattice property for the rotation lattice of $\nu$-trees. The core notion is that of a bracket vector, which has a natural meaning in the graph theoretical context of $\nu$-trees. For completeness, we also include a description of bracket vectors in terms of lattice paths.

\subsection{\texorpdfstring{$\nu$}{v}-bracket vectors}

\begin{definition}\label{def:bracketvectors}
Let $\nu$ be a lattice path from $(0,0)$ to $(m,n)$ with length $\ell = \ell(\nu)=m+n$. The \defn{minimal $\nu$-bracket vector} $\bmin$ is a vector consisting of $\ell+1$ non-negative integers obtained by reading the $y$-coordinates of the lattice points on $\nu$ in the order they appear along the path. We define the set of \defn{fixed positions} as the set $F=\{f_0,f_1,\dots,f_n\}$ where $f_k$ is the position of the last appearance of~$k$ in $\bmin$. 
A \defn{$\nu$-bracket vector} is a vector $\sfb = (b_1,\dots, b_{\ell+1})$ satisfying the following properties:
\begin{enumerate}
\item $b_{f_k}=k$ for $0\leq k \leq n$; \label{def:bracketvectors1}
\item $\bmin_i \leq b_i \leq n$ for all $i$; \label{def:bracketvectors2}
\item the sequence $(b_1,\dots, b_{\ell+1})$ is $121$-avoiding. \label{def:bracketvectors3_alternative}
\end{enumerate}
Recall that a sequence is $121$-avoiding if it does not contain any subsequence~$(k,k',k)$ with $k<k'$.
Condition~(\ref{def:bracketvectors3_alternative}) in this definition can be equivalently replaced by 
\begin{enumerate}[label={(\arabic*$'$)}, ref={\arabic*$'$}, start=3]
\item if $b_i=k$, then $b_j\leq k$ for $i\leq j \leq f_k$. \label{def:bracketvectors3}.
\end{enumerate}
\end{definition}

\begin{theorem}\label{thm:bracketlattice}
The $\nu$-Tamari lattice is isomorphic to the lattice of $\nu$-bracket vectors under componentwise order.
\end{theorem}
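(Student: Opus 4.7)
My plan is to construct an explicit order isomorphism $\sfb$ from the rotation lattice of $\nu$-trees---which, by Theorem~\ref{thm:rotationlatticetrees}, is equivalent to the $\nu$-Tamari lattice---to the poset of $\nu$-bracket vectors under componentwise order. Given a $\nu$-tree $T$, I will define $\sfb(T)=(b_1,\ldots,b_{\ell+1})$ by performing the in-order traversal of the binary tree associated to $T$ (treating the south-child as ``left'' and the east-child as ``right'') and setting $b_i$ to the $y$-coordinate of the $i$-th visited node. A direct inspection of the minimum tree confirms $\sfb(\Tmin)=\bmin$: the in-order traversal of $\Tmin$ visits its nodes row by row from the bottom up, reproducing the sequence of $y$-coordinates of the lattice points of $\nu$.

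Next I will verify that $\sfb(T)$ is a valid $\nu$-bracket vector. Condition~(1) asks that the node at in-order position $f_k$ has $y$-coordinate $k$. For $k<n$, this node is always the valley of $\nu$ at height $k$, a forced leaf of $T$ by Lemma~\ref{lem:vtreeproperties}; I will prove the positional claim by induction on the rotation poset starting from $\Tmin$, noting that rotations do not move the in-order positions of nodes in $L_q$, $R_q$, or $E_p$, and that valleys (being leaves) never appear as the $q$ or $p$ of a rotation. The case $k=n$ is handled analogously by the rightmost node of the top row. Condition~(2) is immediate for the upper bound, while the lower bound follows from $\sfb(\Tmin)=\bmin$ combined with monotonicity of $\sfb$ under right rotations (see below). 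Condition~(3), that is $121$-avoidance, follows from the $\nu$-compatibility of $T$: a pattern $b_i=k,\, b_j=k'>k,\, b_l=k$ with $i<j<l$ would correspond to three nodes of $T$ in rows $k,k',k$ whose relative grid positions force an incompatible pair.

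The central step is the analysis of a right rotation $T\to T'$ exchanging $q$ for $q'$ (with corners $p$ NW and $r$ SE as in Figure~\ref{fig:rotation}). By tracking how the binary-tree structure changes, the local in-order subsequence $\ldots L_q\,q\,R_q\,p\,E_p\ldots$ in $T$ becomes $\ldots L_q\,p\,R_q\,q'\,E_p\ldots$ in $T'$, where $L_q$ and $R_q$ denote the in-orders of $q$'s south and east subtrees and $E_p$ is the in-order of $p$'s east subtree. Since $y(p)=y(q')$, the $y$-coordinate sequence changes at exactly the position previously held by $q$, where it jumps from $y(q)$ to $y(p)$; this gives $\sfb(T)<\sfb(T')$ componentwise. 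To verify this change is a cover relation, I will show that any proposed intermediate value $y(q)<k<y(p)$ at that position is disallowed by condition~$(3')$: the in-order position $i_p$ of $p$ satisfies $i_p\le f_k$ (since the $k$-valley lies at $(x_k,k)$ with $x_k>x_q$, where the inequality follows from the fact that valleys of $\nu$ are non-decreasing in $x$-coordinate together with the rotation constraint that column $x_q$ contains no $T$-node strictly between $q$ and $p$), while $\sfb(T)_{i_p}=y(p)>k$ violates condition~$(3')$ at the would-be intermediate entry.

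Combining these ingredients, $\sfb$ is an order-preserving injection that sends each cover of the rotation poset to a cover of the componentwise poset, hence an order embedding. Surjectivity will follow by producing an explicit inverse that reconstructs the tree inductively from the bracket vector by placing nodes row-by-row guided by the fixed positions $f_k$, or equivalently by a cardinality argument once we observe that both posets are in bijection with $\nu$-paths via the flushing correspondence of Proposition~\ref{prop_flushingbijection}. The main obstacle I expect is the cover-relation verification in the third paragraph: pinning down the inequality $i_p\le f_k$ requires a careful analysis of how the $k$-valley of $\nu$ is located in $T$'s topology around the rotation site, namely showing that this valley is always reached from $p$ along a path of in-order successors rather than predecessors.
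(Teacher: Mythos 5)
Your construction of $\sfb(T)$ via in-order traversal, and your analysis that a right rotation changes exactly one entry of the bracket vector (raising it from $y(q)$ to $y(p)$), agree with the paper (its Lemma~\ref{lem:bracketvectorrotation} and Corollary~\ref{lem:bracketvectors_properties1}), and your verification of properties (\ref{def:bracketvectors1})--(\ref{def:bracketvectors3_alternative}) and of surjectivity via $\nu$-paths is in the same spirit as Proposition~\ref{prop:bracketvectors}. However, there is a genuine logical gap at the final step. You conclude that $\sfb$ is an order embedding from the facts that it is an order-preserving bijection sending covers of the rotation poset to covers of the componentwise poset. That inference is invalid: such a map need not have an order-preserving inverse (already a bijection from a two-element antichain to a two-element chain is order-preserving and vacuously sends covers to covers, yet is not an isomorphism). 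What you would need is the opposite containment of relations: either that \emph{every} cover of the componentwise poset starting at some $\sfb(T)$ is the image of a rotation of $T$, or, more directly, that $\sfb(T)\leq\sfb(T')$ componentwise implies $T\rightarrow T'$ by right rotations. Your proposal never addresses this direction, and it is precisely the crux of the paper's argument (Lemma~\ref{lem:bracketvectors_properties2}): one takes the first coordinate $i$ where $\sfb(T)$ and $\sfb(T')$ differ, uses $121$-avoidance of $\sfb(T')$ to show that $b_i$ is the \emph{first} occurrence of its value $x$ in $\sfb(T)$, performs the rotation there (replacing that $x$ by the entry $y$ at position $f_x+1$), checks $y\leq z$ via $y\leq w\leq z$ so the new vector still lies below $\sfb(T')$, and iterates.

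A secondary remark: the cover-preservation step you flag as your ``main obstacle'' (the inequality $i_p\leq f_k$ ruling out intermediate values) is not needed for the theorem at all --- once the converse direction above is proved, covers take care of themselves, since both orders are then transitive closures of matching relations. So the effort is misdirected: even if you completed that delicate valley analysis, the proof would still not close without the analogue of Lemma~\ref{lem:bracketvectors_properties2}. Likewise, your surjectivity sketch (reconstructing a tree from a bracket vector, or counting via the flushing bijection of Proposition~\ref{prop_flushingbijection}) is fine in outline but should be made precise along the lines of part~(ii) of the proof of Proposition~\ref{prop:bracketvectors}, where one shows each vector satisfying Definition~\ref{def:bracketvectors} is the bracket vector of a unique $\nu$-path.
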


This theorem provides a simple description of the lattice. Its proof is delayed until Section~\ref{sec:proofbracketlattice}, and the description of the meet and join operations are presented in Section~\ref{sec:meetjoin}.

\begin{remark}
Our definition of bracket vectors is inspired by the work of Huang and Tamari in~\cite{HuangTamari1972}, who introduced a notion of \emph{right bracketings} of a word $x_0x_1\dots x_n$ to provide a simple proof of the lattice property for the classical Tamari lattice $\Tam{n}$. Each right bracketing is encoded by an $n$-vector satisfying similar properties as in our definition of $\nu$-bracket vectors (Definition~\ref{def:bracketvectors}). Indeed, their vectors can be obtained from our $\nu$-brackets vectors, for $\nu=(\sfN\sfE)^n$, by removing the values at the fixed positions $f_0,\dots, f_n$. Our Theorem~\ref{thm:bracketlattice} generalizes the main result in~\cite{HuangTamari1972}. 
\end{remark}

\begin{remark}
Pr\'eville-Ratelle and Viennot showed that the $\nu$-Tamari lattice can be obtained as an interval in the classical Tamari lattice~\cite{PrevilleRatelleViennot2014}. Restricting the classical bracket vectors of Huang and Tamari in~\cite{HuangTamari1972} to this interval gives a similar description of the $\nu$-Tamari lattice as in Theorem~\ref{thm:bracketlattice}. 
However, the description of $\nu$-bracket vectors in this paper is simpler and more direct; it also uncovers essential information about $\nu$-trees, as we will now see. 
\end{remark}

\subsection{Bracket vectors from \texorpdfstring{$\nu$}{v}-trees}
The bracket vector of a $\nu$-tree is obtained using the in-order traversal\footnote{The in-order is called \defn{symmetric order} by Pr\'eville-Ratelle and Viennot in~\cite{PrevilleRatelleViennot2014}} of the tree, which is defined recursively as follows: if $x$ is the root, $A$ is its left subtree and $B$ its right subtree, we visit the nodes $A$ in in-order, then visit $x$ and finally visit $B$ in in-order.

\begin{definition}[Bracket vector of a $\nu$-tree]
We label each node of a $\nu$-tree $T$ by its $y$-coordinate. The \defn{bracket vector} $\sfb(T)$ is the result of reading the labels of the nodes in \defn{in-order}. Note that $\bmin=\sfb(\Tmin)$, the bracket vector of the minimal $\nu$-tree. 
\end{definition}

An example of the bracket vectors for all $\sfE\sfN\sfE\sfE\sfN$-trees is illustrated in Figure~\ref{fig:Tamari35} (right), where the bold numbers are the values at the fixed positions $(f_0,f_1,f_2)=(2,5,6)$. The rotation operation induces a simple operation at the level of bracket vectors, schematically depicted in Figure~\ref{fig:bracket_rotation}. 
Let $T'$ be a rotation of $T$. If $\sfb(T)=AxByC$, where $x$ is the entry corresponding to the node that is being rotated, and $y$ that of its parent; then $\sfb(T')=AyByC$.
Note that in such rotation all entries of the vectors remain unchanged except for the label corresponding to the rotated node. 
This explains why the values at the fixed positions remain unchanged for all~$\nu$-trees.

\begin{figure}[htpb]
\centering 
	\begin{overpic}[width=0.6\textwidth]{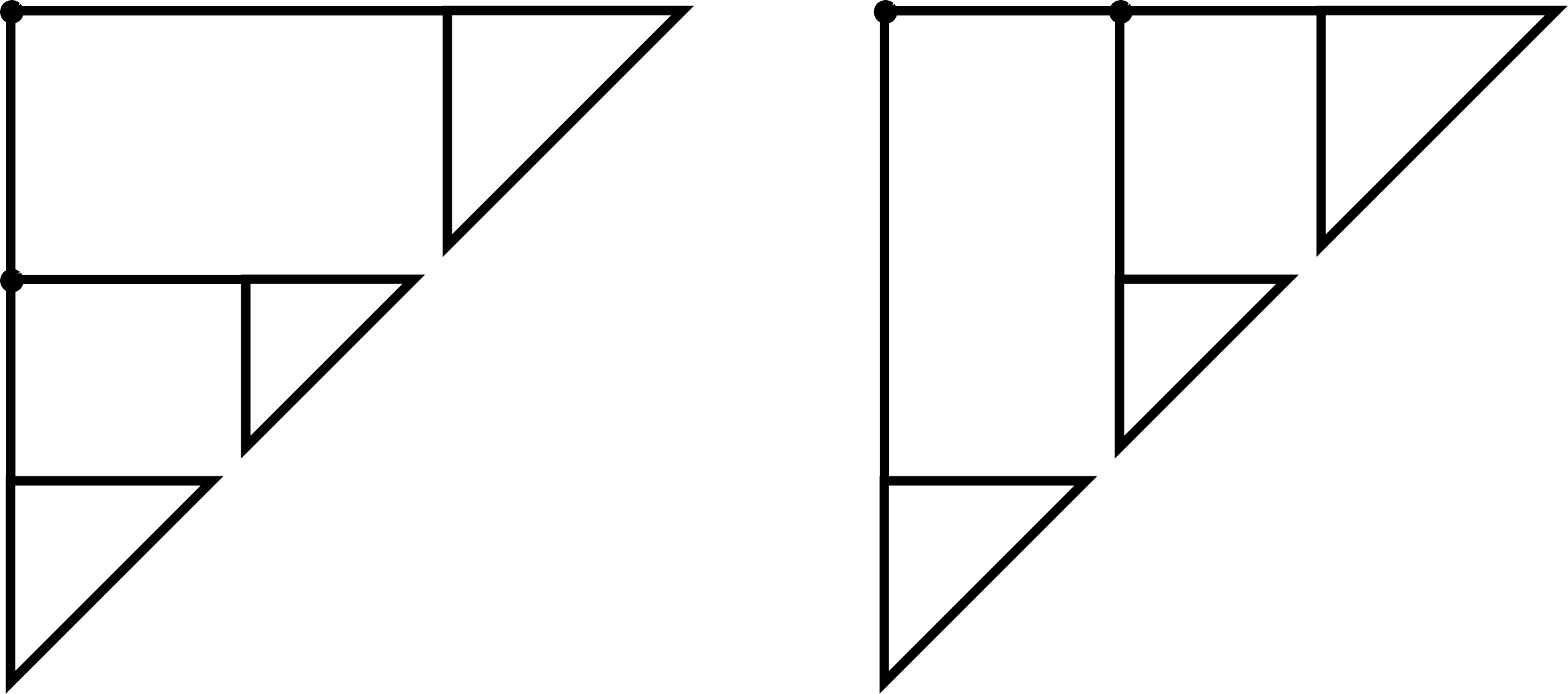}
		\put(3,8){$A$}
		\put(16.8,21.8){$B$}		
		\put(31,36){$C$}
		\put(-3,25){$x$}
		\put(-3,42.5){$y$}
		\put(23,10){$AxByC$}
		\put(59,8){$A$}
		\put(72.8,21.8){$B$}		
		\put(87,36){$C$}
		\put(68,41){$y$}
		\put(53,42.5){$y$}
		\put(79,10){$AyByC$}		
	\end{overpic}
 \caption{The bracket rotation.}
 \label{fig:bracket_rotation}
\end{figure}

\begin{remark}
Our notion of bracket vector is closely related 
to other definitions of bracket vectors in the literature. For instance, in~\cite{BjornerWachs1997} Bj\"orner and Wachs define (after Knuth~\cite{Knuth93} and Pallo~\cite{Pallo86}) the bracket vector of a complete binary tree $T$ on $\ell+2$ leaves as the sequence $r(T)=(r_1,\ldots,r_{\ell+1})$, where $r_i$ denotes the number of internal nodes in the right subtree of the internal node at position $i$ in the in-order traversal of $T$. In terms of $r(T)$, the entries of the bracket vector $\sfb(T)=(b_1,\ldots,b_{2\ell+3})$ can be recovered as $b_{2i-1}=i-1$ for $1\leq i \leq \ell+2$ (fixed positions) and $b_{2i}=r_i+i$ for $1\leq i \leq \ell+1$. The latter follows because $b_{2i}$ equals the number of leaves weakly preceding internal node $2i$ in the \emph{post-order} traversal\footnote{In the post-order traversal of a binary tree, if $x$ is the root, $A$ is its left subtree and $B$ its right subtree, we visit the nodes $A$ in post-order, then visit $B$ in post-order, and finally visit $x$.} of $T$ minus one (see~\cite[Remark 2.2]{tams2017}), which in turn equals $r_i+i$. 
One can generalize the definition of $r(T)$ for more general $\nu$-trees, and its relation with $\sfb(T)$ is analogous.  
\end{remark}

\subsection{Bracket vectors from \texorpdfstring{$\nu$}{v}-paths}
Bracket vectors can also be easily defined in terms of $\nu$-paths. 

\begin{definition}[Bracket vector of a $\nu$-path]
We label each lattice point of a $\nu$-path $\mu$ by its $y$-coordinate. 
The \defn{bracket vector} $\sfb(\mu)$ is constructed from the labels as follows. We start with an empty vector of length $\ell+1$ and start filling its entries. For $k$ varying from $0$ to $n$, 
we set as many entries of the vector equal to $k$ as there are lattice points in row $k$, rightmost possible but before the fixed position $f_k$.
Note that $\bmin=\sfb(\nu)$, the bracket vector of $\nu$ itself. 
\end{definition}

An example is illustrated in Figure~\ref{ex:bracketvector_from_path}. The underlined numbers denote the values at the fixed positions $(f_0,f_1,f_2,f_3,f_4)=(3,4,7,8,10)$.

\begin{figure}[htbp]
\begin{center}
	\begin{overpic}[width=\textwidth]{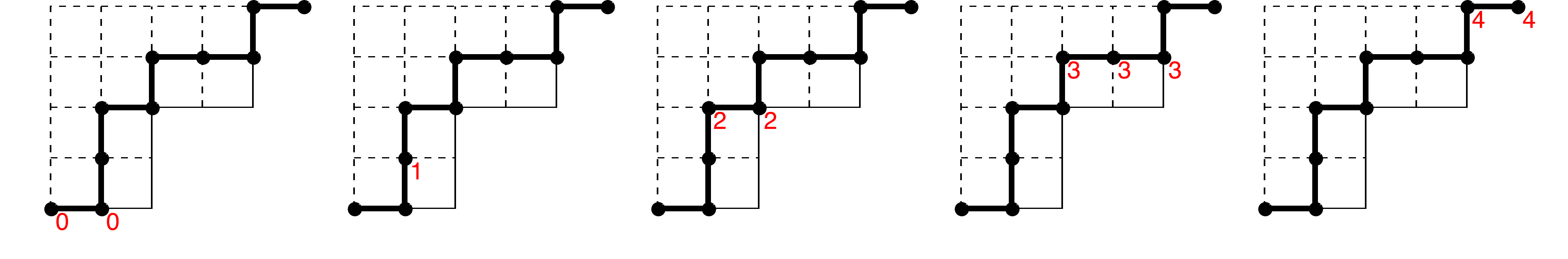}
	\put(1.5,0){\tiny$(,{\color{red}0},{\color{red}\underline{\bf0}},,,,,,,)$}
	\put(21.5,0){\tiny$(,0,\underline{\bf0},{\color{red}\underline{\bf1}},,,,,,)$}
	\put(39.5,0){\tiny$(,0,\underline{\bf0},\underline{\bf1},,{\color{red}2},{\color{red}\underline{\bf2}},,,)$}
	\put(58.5,0){\tiny$({\color{red}3},0,\underline{\bf0},\underline{\bf1},{\color{red}3},2,\underline{\bf2},{\color{red}\underline{\bf3}},,)$}
	\put(79.5,0){\tiny$(3,0,\underline{\bf0},\underline{\bf1},3,2,\underline{\bf2},\underline{\bf3},{\color{red}4},{\color{red}\underline{\bf4}})$}
	\end{overpic}
\caption{Bracket vectors from $\nu$-paths.}
\label{ex:bracketvector_from_path}
\end{center}
\end{figure}

\begin{proposition}\label{prop:bracketvectors}
The bracket vectors for $\nu$-trees and $\nu$-paths are characterized by Definition~\ref{def:bracketvectors}. Moreover, if $T=\rflush(\mu)$ is the $\nu$-tree corresponding to a $\nu$-path $\mu$ under the right flushing bijection, then $\sfb(T)=\sfb(\mu)$.
\end{proposition}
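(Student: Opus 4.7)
Let $\mathcal{B}$ denote the set of vectors satisfying Definition~\ref{def:bracketvectors}. My plan is to prove the proposition in three stages, showing that $\mu \mapsto \sfb(\mu)$ is a bijection between $\nu$-paths and $\mathcal{B}$, that $\sfb(T) \in \mathcal{B}$ for every $\nu$-tree $T$, and finally deducing $\sfb(T) = \sfb(\mu)$ from the uniqueness established in stage one.

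The key observation for the first stage is that any vector in $\mathcal{B}$ is uniquely determined by the multiset of its entries. Indeed, property~\ref{def:bracketvectors1} together with $121$-avoidance forces the $0$'s to fill a block ending at $f_0$, then the $1$'s to occupy the rightmost available positions up to $f_1$, and so on inductively. Since a $\nu$-path is uniquely determined by its row-counts $(p_0, \ldots, p_n)$, which coincide with the multiplicities of $\sfb(\mu)$, the map $\mu \mapsto \sfb(\mu)$ is injective. The verification $\sfb(\mu) \in \mathcal{B}$ is routine: property~\ref{def:bracketvectors1} holds by construction; property~\ref{def:bracketvectors3_alternative} follows by induction from the rightmost-first rule; property~\ref{def:bracketvectors2} translates the condition ``$\mu \geq \nu$'' into a partial-sum comparison of row-counts. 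Surjectivity follows by reading row-counts from $\sfb \in \mathcal{B}$ (positive by~\ref{def:bracketvectors1}) and reconstructing~$\mu$.

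For the second stage, I would induct on the rotation lattice, which is connected by Lemma~\ref{lem:numberofnodes}. The base case $T = \Tmin$ reduces to the direct computation $\sfb(\Tmin) = \bmin$, since the in-order traversal of $\Tmin$ reads off the $y$-coordinates of $\nu$ in order. For the inductive step, a right rotation $T \to T'$ changes $\sfb(T) = (A, x, B, y, C)$ into $\sfb(T') = (A, y, B, y, C)$ with $x = y_r < y = y_p$ (Figure~\ref{fig:bracket_rotation}). Property~\ref{def:bracketvectors1} is preserved because $q$ is not the rightmost row-$y_r$ node (the east-neighbor $r$ witnesses another such node east of $q$), so its in-order position is strictly less than $f_{y_r}$. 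Property~\ref{def:bracketvectors2} is preserved because the only changed entry increases. For property~\ref{def:bracketvectors3_alternative}, a case analysis on whether the changed entry appears as the left, middle, or right of a hypothetical new $(a,b,a)$-subsequence eliminates every case using the bounds that entries of $A$ are strictly less than~$x$, entries of $B$ are at most~$x$, and entries of $C$ are at most~$y$; these bounds reflect the geometric fact that left subtrees lie in strictly lower rows while right subtrees lie in rows at most equal to their root.

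Finally, since right-flushing preserves rows (Proposition~\ref{prop_flushingbijection}), $\sfb(T)$ and $\sfb(\lflush(T))$ share their multisets of entries; as both lie in $\mathcal{B}$, the uniqueness from stage one forces $\sfb(T) = \sfb(\lflush(T))$, which is the ``moreover'' claim upon setting $T = \rflush(\mu)$. The characterization of $\sfb(T)$ then transfers from that of $\sfb(\mu)$ through the flushing correspondence. The most delicate point I anticipate is the case analysis preserving $121$-avoidance under rotation: while each case is a short local verification, correctly identifying the ranges of values in the subvectors $A$, $B$, and~$C$ requires tracking how in-order traversal interacts with the left-south/right-east encoding of the $\nu$-tree, and it is here that one uses the binary-tree structure most directly.
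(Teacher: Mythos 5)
Your overall architecture — characterize which vectors come from $\nu$-paths via the ``right-flushed reconstruction from the multiset of entries'', verify that $\sfb(T)$ satisfies Definition~\ref{def:bracketvectors}, then conclude $\sfb(T)=\sfb(\mu)$ because both vectors lie in $\mathcal{B}$ and have the same multiset of entries — is essentially the paper's proof, and your stages (1) and (3), including the base case $\sfb(\Tmin)=\bmin$ and the fixed-position argument for property~(\ref{def:bracketvectors1}), are fine. The problem is the step you yourself flag as the delicate one: preserving condition~(\ref{def:bracketvectors3_alternative}) along a rotation. In the factorization $\sfb(T)=A\,x\,B\,y\,C$ of Figure~\ref{fig:bracket_rotation}, $A$ is the \emph{entire} in-order prefix before the rotated node and $C$ the \emph{entire} suffix after its parent; they are not subtrees of the rotated node, so the bounds you invoke are false. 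Any node having the rotated node in its right subtree is read before it and carries a label $\geq y>x$, so $A$ can contain entries larger than $x$; and any node having the parent in its left subtree is read after it and carries a label $>y$, so $C$ can contain entries larger than $y$. Concretely, for the valid bracket vector $\sfb=(3,0,0,1,3,2,2,3,4,4)$ of Figure~\ref{ex:bracketvector_from_path}, the rotation at the first $0$ has $x=0$, $y=1$, $A=(3)$ and $C=(3,2,2,3,4,4)$, contradicting both claimed bounds. Since your case analysis is stated to rest on exactly these bounds, the inductive step as written fails; this is a genuine gap, not a presentational one.

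The gap is repairable inside your strategy, because the only facts you actually need are: the changed position is the \emph{first} occurrence of $x$ (true: the rotated node has its parent to the north, hence by Lemma~\ref{lem:vtreeproperties}\eqref{it:tree-like} it is leftmost in its row and therefore precedes, in in-order, all other nodes of that row); every entry of $B$ is at most $x$ (true: $B$ is the right subtree of the rotated node); the old vector has $y$ at position $f_x+1$; and the $121$-avoidance of $\sfb(T)$ itself. With these, each of your three cases for a hypothetical new pattern $(k,k',k)$ in $\sfb(T')$ transports to a $121$-pattern or a violation of~(\ref{def:bracketvectors3}) already present in $\sfb(T)$, a contradiction. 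Alternatively, the paper sidesteps the induction for this condition entirely and proves~(\ref{def:bracketvectors3}) directly for every $\nu$-tree: between two equal entries $k$ the in-order traversal only reads labels of descendants of the node producing the first $k$, and all such labels are at most $k$. Either repair works; I would recommend the direct argument, since it removes the need to track how rotations interact with the global prefix and suffix at all.
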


\begin{proof}
The result follows from the following three observations:

\begin{enumerate}[label=\emph{(\roman*)},labelindent=0pt, wide, labelwidth=!]
\item \emph{The bracket vector of a $\nu$-tree $T$ satisfies the properties in Definition~\ref{def:bracketvectors}}. 
Note that $\sfb(\Tmin)$ satisfies Property~(\ref{def:bracketvectors1}). Since rotations do not change the value at position $f_k$ in a bracket vector, then $b_{f_k}=k$ for every tree.   

Property~(\ref{def:bracketvectors2}) follows from the fact that each right rotation increases the values of the bracket vector. Property~(\ref{def:bracketvectors3}) follows because, between two values $k$ in $\sfb(T)$, we read in the in-order some labels of nodes that are descendants of the node with the first value~$k$. These labels are all less than or equal to $k$.

\item \label{it:uniquebvector}\emph{Each $\nu$-bracket vector can be obtained uniquely as the bracket vector of a $\nu$-path~$\mu$}. 
Let $\sfb$ be a $\nu$-bracket vector (as in Definition~\ref{def:bracketvectors}) and $\mu$ be the unique path containing as many lattice points on row $k$ as values $k$ in $\sfb$. Since all values $\leq k$ in $\sfb$ are placed at positions $\leq f_k$, the path $\mu$ is weakly above $\nu$ and therefore is a $\nu$-path. If $\mu'$ is a $\nu$-path with $\mu'\neq \mu$, then $\mu,\mu'$ have a different number of points on some row $k$. But then $\sfb\neq \sfb(\mu')$ because they do not have the same number of instances of $k$, so the map $\mu\mapsto \sfb(\mu)$ is injective. To check that $\sfb=\sfb(\mu)$, note that~$\sfb$ can be uniquely reconstructed in the same way as $\sfb(\mu)$ is defined: start from the empty vector, and for $k$ varying from $0$ to $b$, add all the values $k$ from $\sfb$ as rightmost as possible before $f_k$ inclusive. The resulting vector is equal to $\sfb$, for otherwise property~(\ref{def:bracketvectors3}) would be invalidated at some point in the process.   

\item \emph{If $T=\rflush(\mu)$ then $\sfb(T)=\sfb(\mu)$}. 
As we have seen in the proof of~\ref{it:uniquebvector}, a $\nu$-bracket vector is completely determined by the number of values $k$ it contains for each $0\leq k \leq b$. Since $T=\rflush(\mu)$ and $\mu$ have the same number of labels equal to $k$ for each $k$, then their bracket vectors must be equal.  \qedhere
\end{enumerate}
\end{proof}

\subsection{Properties of bracket vectors and proof of Theorem~\ref{thm:bracketlattice}}\label{sec:proofbracketlattice}
Let $T$ and~$T'$ be two $\nu$-trees. We will write $T\rightarrow T'$ if the tree $T'$ can be obtained from $T$ by a sequence of right rotations. 
The rotation action on bracket vectors can be described as follows.

\begin{lemma}\label{lem:bracketvectorrotation}
Let $T'$ be a $\nu$-tree obtained by a right rotation of $T$ at a node with label $x$. The bracket vector $\sfb(T')$ can be obtained from $\sfb(T)$ by replacing the first appearance of $x$ by the value $y$ at position $f_x+1$ (the value following the last $x$).
\end{lemma}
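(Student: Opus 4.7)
The plan is to use the in-order description of $\sfb(T)$ to track how the bracket vector changes when we rotate at $q$, and then to identify the replacement value with $b_{f_x+1}$.

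First I would unfold the rotation of Figure~\ref{fig:rotation} at the level of binary trees. Since $p$ lies due north of $q$ in $T$, the parent of $q$ is $p$. Writing $A$ for the in-order of the left subtree of $q$ (the nodes below $q$ in its column and their descendants), and $B$ for the in-order of the right subtree of $q$ (rooted at $r$), the in-order traversal of $T$ takes the local form $\ldots A\,x\,B\,y\,C \ldots$, where $y$ is the label of $p$ and $C$ is the in-order of $p$'s right subtree together with whatever suffix follows $p$. A parallel analysis of $T'$ (with $q'$ in the row of $p$ and column of $r$) shows that in $T'$ the parent of $q'$ is $p$ and the left child of $q'$ is $r$; hence the subtree $B$ is simply re-parented from $q$ to $q'$, and the in-order of $T'$ becomes $\ldots A\,y\,B\,y\,C\ldots$. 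Thus $\sfb(T')$ differs from $\sfb(T)$ in exactly one entry: the label of the rotated node, changing from $x$ to $y$. Because every node of $A$ lies strictly below $q$ in the same column, its label is strictly less than $x$; combined with the claim $q=n_1$ proved below, this identifies the modified position as the \emph{first} occurrence of $x$ in $\sfb(T)$.

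Next I would show that $b_{f_x+1}=y$, by locating the last occurrence of $x$ in $\sfb(T)$ and its in-order successor. Let $n_1,\dots,n_k$ be the nodes of $T$ in row $x$, listed from left to right. For each $i<k$, the nearest $T$-point due west of $n_{i+1}$ is $n_i$; by property~\eqref{it:tree-like} of Lemma~\ref{lem:vtreeproperties}, $n_{i+1}$ then has no $T$-point due north, so its parent is $n_i$ and $n_{i+1}$ is the right child of $n_i$. Hence $n_1,\dots,n_k$ form a right chain in the binary tree, they appear in this order in the in-order, and $n_k$ carries the last $x$ of $\sfb(T)$, i.e.\ $b_{f_x}$. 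The in-order successor of the bottom of such a right chain is obtained by climbing back up to $n_1$ and then taking $n_1$'s parent. Now $q$ is itself the leftmost $T$-point in row $x$: its parent $p$ lies due north, so property~\eqref{it:tree-like} applied to $q$ forbids any $T$-point due west of $q$. Thus $q=n_1$, its parent is $p$, and $b_{f_x+1}$ equals the label of $p$, which is $y$.

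The main obstacle is the in-order bookkeeping used to pin down both the first and last occurrences of $x$ in $\sfb(T)$; once the structural fact that the $T$-points in a common row form a right chain in the binary tree is established, the schematic $A\,x\,B\,y\,C\rightsquigarrow A\,y\,B\,y\,C$ of Figure~\ref{fig:bracket_rotation} immediately yields the lemma.
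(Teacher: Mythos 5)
Your proof is correct and follows essentially the same route as the paper's: it justifies the schematic $AxByC \rightsquigarrow AyByC$ of Figure~\ref{fig:bracket_rotation}, identifies the rotated node with the first $x$ (since $q$ is leftmost in its row by property~\eqref{it:tree-like}), and identifies $b_{f_x+1}$ with the label of $p$ via the right chain of row-$x$ nodes, exactly the content the paper compresses into its short argument. The only blemish is the parenthetical claim that every node of $A$ lies below $q$ \emph{in the same column} — descendants of those nodes may sit in other columns — but all of $A$ does lie strictly below $q$'s row, which is what your label bound (and the rest of the argument) actually uses.
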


\begin{proof}
This result follows from our schematic illustration of right rotation in Figure~\ref{fig:bracket_rotation}: The first value $x$ in $\sfb(T)$ is the node being rotated, while the last $x$ corresponds to the last node of the subtree $B$ read in in-order. This last $x$ stays at the fixed position $f_k$ for all trees. The first entry $y$ after the last $x$ corresponds to the label of the parent of the node being rotated, which gets a new label equal to~$y$.  
\end{proof}

\begin{corollary}\label{lem:bracketvectors_properties1}
If $T\rightarrow T'$ then $\sfb(T)< \sfb(T')$.
\end{corollary}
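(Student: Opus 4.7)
The plan is to reduce to the case of a single right rotation using Lemma~\ref{lem:bracketvectorrotation}, and then extend to arbitrary sequences of rotations by transitivity.

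First I would treat the base case: assume $T'$ is obtained from $T$ by a single right rotation at a node labeled $x$. By Lemma~\ref{lem:bracketvectorrotation}, the vector $\sfb(T')$ agrees with $\sfb(T)$ in every coordinate except at the position of the first occurrence of $x$, where the value is replaced by $y := \sfb(T)_{f_x+1}$ (the value immediately following the last $x$). Hence it suffices to verify that $y>x$ in order to obtain $\sfb(T)<\sfb(T')$ componentwise (with a single strict inequality at that position).

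The main point to check is the inequality $y>x$. Referring to Figure~\ref{fig:rotation}, the node being rotated is $q$ (with $y$-coordinate $x$), and the resulting node $q'$ lies in the same row as $p$, the closest element of $T$ directly above $q$ in its column. Thus $y$ is the $y$-coordinate of $p$, and since $p$ is strictly north of $q$, we have $y>x$. Concretely, in the in-order traversal, the entry at position $f_x+1$ is the label of the parent of the rotated node in $T$; the parent is connected to the rotated node by a vertical edge going up, so its $y$-coordinate strictly exceeds $x$. This establishes the strict componentwise inequality $\sfb(T)<\sfb(T')$ in the one-rotation case.

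For the general case, by definition of $T\rightarrow T'$ there exists a sequence $T=T_0,T_1,\dots,T_m=T'$ in which $T_{i+1}$ is obtained from $T_i$ by a single right rotation. The base case yields $\sfb(T_i)<\sfb(T_{i+1})$ for every $i$, and transitivity of the componentwise partial order immediately gives $\sfb(T)<\sfb(T')$. I do not anticipate any obstacle beyond confirming that the replacement value $y$ arises as the label of the parent of the rotated node (which is transparent from the schematic in Figure~\ref{fig:bracket_rotation}), so the corollary follows directly once Lemma~\ref{lem:bracketvectorrotation} is in hand.
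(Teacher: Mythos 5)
Your proposal is correct and takes essentially the same route as the paper: a single right rotation changes exactly one entry of the bracket vector, replacing the first $x$ by $y$, and $y>x$ because $y$ is the label of the parent of the rotated node, which lies strictly north in the rotation configuration; the general case then follows by transitivity along a sequence of rotations. The only difference is that you spell out the justification of $y>x$, which the paper asserts implicitly via Lemma~\ref{lem:bracketvectorrotation} and Figure~\ref{fig:bracket_rotation}.
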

\begin{proof}
Since $y>x$ in the previous lemma, applying a right rotation to a tree acts on its bracket vector by increasing exactly one of its entries. The result follows by applying a sequence of rotations. 
\end{proof}

Note that if a $\nu$-bracket vector $\sfb$ has at least two $x$'s and $x<n$, then a right rotation action can always be performed at the first appearance of $x$, replacing it by the value $y$ that appears after the last $x$ in $\sfb$.

\begin{lemma}\label{lem:bracketvectors_properties2}
If $\sfb(T)< \sfb(T')$ then $T\rightarrow T'$.
\end{lemma}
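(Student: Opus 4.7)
The plan is to proceed by strong induction on the non-negative integer $\Delta(T,T') := \sum_k \bigl(\sfb(T')_k - \sfb(T)_k\bigr)$, proving the slightly stronger statement that $\sfb(T) \leq \sfb(T')$ implies $T \rightarrow T'$. The base case $\Delta=0$ forces $\sfb(T) = \sfb(T')$, and the bijectivity of $T \mapsto \sfb(T)$ established in Proposition~\ref{prop:bracketvectors} yields $T = T'$ (so $T \rightarrow T'$ via the empty sequence of rotations). For the inductive step I will exhibit a single right rotation $T \rightarrow T''$ with $\sfb(T) < \sfb(T'') \leq \sfb(T')$; then $\Delta(T'',T') < \Delta(T,T')$ and the inductive hypothesis gives $T'' \rightarrow T'$, whence $T \rightarrow T'$.

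To produce this rotation, let $i$ be the smallest position where $\sfb(T)$ and $\sfb(T')$ disagree, and set $x := \sfb(T)_i < \sfb(T')_i =: z$. The first preparatory step is to argue that $i$ is the first occurrence of $x$ in $\sfb(T)$ and that $\sfb(T)$ contains another copy of $x$ further to the right: if $\sfb(T)_j = x$ for some $j<i$, then by minimality of $i$ also $\sfb(T')_j = x$, and property~\ref{def:bracketvectors3} applied to $\sfb(T')$ would force $\sfb(T')_i \leq x$, contradicting $z > x$; moreover $i = f_x$ is ruled out because it would give $\sfb(T)_i = \sfb(T')_i = x$ by property~\ref{def:bracketvectors1}, so $i < f_x$ and the two copies of $x$ at positions $i$ and $f_x$ are distinct. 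Since $z \leq n$ forces $x < n$, the remark following Corollary~\ref{lem:bracketvectors_properties1} guarantees that a right rotation at this first $x$ is well-defined. By Lemma~\ref{lem:bracketvectorrotation}, it produces a $\nu$-tree $T''$ whose bracket vector agrees with $\sfb(T)$ except at position $i$, where the entry becomes $y := \sfb(T)_{f_x+1}$.

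The crux of the argument, and the main obstacle, is to verify $y \leq z$, which together with the agreement of $\sfb(T'')$ and $\sfb(T)$ at all other positions will yield $\sfb(T'') \leq \sfb(T')$. I plan to combine two observations. First, since $\bmin$ records the $y$-coordinates along the monotone lattice path $\nu$, the fixed positions satisfy $f_0 < f_1 < \cdots < f_n$, so $z > x$ implies $f_x + 1 \leq f_z$. Second, property~\ref{def:bracketvectors3} applied to $\sfb(T')$ at position $i$ with value $z$ gives $\sfb(T')_{f_x+1} \leq z$, since $i < f_x + 1 \leq f_z$. Combined with the hypothesis $\sfb(T)_{f_x+1} \leq \sfb(T')_{f_x+1}$, this delivers $y \leq z$. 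Finally, Corollary~\ref{lem:bracketvectors_properties1} gives $y > x$, whence $\Delta(T'',T') = \Delta(T,T') - (y - x) < \Delta(T,T')$ and the induction closes.
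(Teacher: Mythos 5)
Your proof is correct and takes essentially the same route as the paper's: pick the smallest index where the vectors disagree, show it carries the first occurrence of $x$ (so a right rotation there is available), use property~(\ref{def:bracketvectors3}) of $\sfb(T')$ together with $\sfb(T)_{f_x+1}\leq \sfb(T')_{f_x+1}$ to get $y\leq z$, and iterate; your induction on $\sum_k(\sfb(T')_k-\sfb(T)_k)$ is just a more formal phrasing of the paper's ``continue doing rotations until reaching $\sfb(T')$''.
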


\begin{proof}
Let $\sfb(T)=(b_1,\dots,b_{\ell+1})$ and $\sfb(T')=(b_1',\dots,b_{\ell+1}')$ and consider the smallest index $i$ such that $b_i\neq b_i'$. For simplicity we call $b_i=x$ and $b_{f_x+1}=y$ the first entry after the last $x$ in $\sfb$. We also denote $b_i'=z$ and $b_{f_x+1}'=w$. 

\begin{table}[htbp]
\begin{center}
\begin{tabular}{ccccccccc}
$\sfb' =$&\dots & $z$ & \dots & $\circled{x}$ & $w$ & \dots & $\circled{z}$ & \dots \\ 
$\sfb =$&\dots & $x$ & \dots & $\circled{x}$ & $y$ & \dots & $\circled{z}$ & \dots \\ 
&&$i$&&$f_x$&&&$f_z$&
\end{tabular}
\end{center}
\end{table}%

We start by observing that $b_i$ is the first entry equal to $x$ in $\sfb$, otherwise there would be a pattern $x\dots z \dots x$ with $x<z$ in $\sfb'$, which would contradict Property~(\ref{def:bracketvectors3}) in the definition of $\nu$-bracket vectors. 
Applying a rotation operation on this first $x$ produces a new bracket vector $\bar \sfb$, which is obtained from $\sfb$ by replacing its first $x$ by the value $y$.
Since $y\leq w$ (because $\sfb<\sfb'$) and $w\leq z$ by Property~(\ref{def:bracketvectors3}) for $\sfb'$, 
we get that $y \leq z$. Therefore $\sfb<\bar \sfb \leq \sfb'$. If $\bar \sfb = \sfb'$ we are done, otherwise we continue doing rotations until reaching $\sfb'$ in a finite number of steps. 
\end{proof}

We are now ready to prove Theorem~\ref{thm:bracketlattice} asserting that the $\nu$-Tamari lattice is isomorphic to the lattice of $\nu$-bracket vectors under componentwise order.

\begin{proof}[Proof of Theorem~\ref{thm:bracketlattice}]
By Theorem~\ref{thm:rotationlatticetrees}, the $\nu$-Tamari lattice is isomorphic to the lattice of $\nu$-trees whose order is induced by right rotations. This lattice is isomorphic to the lattice of $\nu$-bracket vectors by Proposition~\ref{prop:bracketvectors}, Corollary~\ref{lem:bracketvectors_properties1} and Lemma~\ref{lem:bracketvectors_properties2}. 
\end{proof}

\subsection{Meet and join}\label{sec:meetjoin}

The properties~(\ref{def:bracketvectors1}),~(\ref{def:bracketvectors2}), and~(\ref{def:bracketvectors3}) in the definition of $\nu$-bracket vectors are clearly preserved after taking the componentwise minimum between two bracket vectors. This gives us a simple description for the meet operation:

\begin{proposition}\label{prop:meet}
The meet of two $\nu$-bracket vectors $\sfb = (b_1 , \dots, b_{\ell+1} )$ and
$\sfb' = (b'_1 , \dots , b'_{\ell+1} )$ is their component-wise minimum 
 \[\sfb\wedge \sfb'= (\min\{b_1,b_1'\},\dots , \min\{b_{\ell+1},b_{\ell+1}'\}).\] 
\end{proposition}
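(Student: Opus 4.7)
The plan is to verify that the componentwise minimum $\tilde\sfb := (\min\{b_1,b_1'\},\dots,\min\{b_{\ell+1},b_{\ell+1}'\})$ of two $\nu$-bracket vectors is itself a $\nu$-bracket vector, and then observe that any lower bound of $\sfb$ and $\sfb'$ is automatically bounded above by $\tilde\sfb$. Since Theorem~\ref{thm:bracketlattice} identifies the $\nu$-Tamari lattice with the poset of $\nu$-bracket vectors under componentwise order, this will immediately yield the claim.

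First I would verify each of the three defining conditions of Definition~\ref{def:bracketvectors} for $\tilde\sfb$. Condition~\eqref{def:bracketvectors1} is immediate since $\tilde b_{f_k} = \min\{b_{f_k},b'_{f_k}\} = \min\{k,k\} = k$. Condition~\eqref{def:bracketvectors2} follows because $\bmin_i \leq b_i$ and $\bmin_i \leq b'_i$ together imply $\bmin_i \leq \tilde b_i$, while $b_i, b_i' \leq n$ yields $\tilde b_i \leq n$.

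For condition~\eqref{def:bracketvectors3}, I would use the equivalent formulation~\hbox{(3$'$)}. Suppose $\tilde b_i = k$; then $\min\{b_i, b_i'\} = k$, so at least one of $b_i$ or $b_i'$ equals~$k$. Without loss of generality, $b_i = k$. Applying property~\hbox{(3$'$)} to $\sfb$ gives $b_j \leq k$ for all $i \leq j \leq f_k$, and hence $\tilde b_j \leq b_j \leq k$ for all such $j$. Thus~\hbox{(3$'$)} holds for $\tilde\sfb$.

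Finally, $\tilde\sfb \leq \sfb$ and $\tilde\sfb \leq \sfb'$ componentwise by construction, so $\tilde\sfb$ is a common lower bound. If $\sfc = (c_1,\dots,c_{\ell+1})$ is any $\nu$-bracket vector with $\sfc \leq \sfb$ and $\sfc \leq \sfb'$, then $c_i \leq \min\{b_i, b_i'\} = \tilde b_i$ for every $i$, so $\sfc \leq \tilde\sfb$. Hence $\tilde\sfb$ is the greatest lower bound, i.e., the meet. The only step requiring any thought is the verification of~\hbox{(3$'$)}, and as shown above it reduces to a one-line application of the same property to one of the two input vectors; I do not foresee any genuine obstacle.
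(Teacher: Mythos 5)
Your proposal is correct and follows essentially the same route as the paper, which simply asserts that properties (\ref{def:bracketvectors1}), (\ref{def:bracketvectors2}), and (\ref{def:bracketvectors3}) are preserved under componentwise minimum and concludes; you have merely spelled out the (correct) verification, including the only nontrivial point, namely that condition (3$'$) for $\tilde\sfb$ follows by applying it to whichever of $\sfb,\sfb'$ attains the minimum.
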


The join cannot be obtained by taking the componentwise maximum. Instead, it can computed in terms of the meet of the corresponding reflected trees $\reverse T$ and~$\reverse T'$. 

\begin{proposition}
The join $T\vee T'=\reverse{\reverse T \wedge \reverse T'}$. 
\end{proposition}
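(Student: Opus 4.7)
The plan is to leverage the anti-isomorphism established in Corollary~\ref{cor:combinatorialduality}. Recall that the map $T \mapsto \reverse{T}$ (reflection across the line of slope $-1$ through the root) is a bijection between $\nu$-trees and $\reverse{\nu}$-trees that turns right rotations into left rotations. Hence it is an \emph{anti-isomorphism} of lattices from $\Tam{\nu}$ to $\Tam{\reverse{\nu}}$; moreover, this map is an involution, i.e.\ $\reverse{\reverse{T}} = T$.

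First I would observe the standard fact that any anti-isomorphism $\phi\colon L \to L'$ between lattices swaps joins and meets: $\phi(x \vee y) = \phi(x) \wedge \phi(y)$. Applying this to $\phi = \reverse{(\cdot)}$ with inputs $T$ and $T'$ in $\Tam{\nu}$ yields
\[
\reverse{T \vee T'} \;=\; \reverse{T} \wedge \reverse{T'},
\]
where the meet on the right-hand side is taken in $\Tam{\reverse{\nu}}$.

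Next, since $\reverse{(\cdot)}$ is also an anti-isomorphism in the reverse direction (from $\Tam{\reverse{\nu}}$ to $\Tam{\nu}$) and is involutive, I would apply it to both sides of the identity above. Using $\reverse{\reverse{(T \vee T')}} = T \vee T'$ on the left, we obtain
\[
T \vee T' \;=\; \reverse{\reverse{T} \wedge \reverse{T'}},
\]
which is the desired formula. Proposition~\ref{prop:meet} then gives an explicit description of the inner meet as the componentwise minimum of the $\reverse{\nu}$-bracket vectors of $\reverse{T}$ and $\reverse{T'}$, so the join admits a fully concrete computation by conjugation with reflection. No step here presents a genuine obstacle; the only care needed is to verify explicitly that the anti-isomorphism property claimed in Corollary~\ref{cor:combinatorialduality} is indeed an order-reversing bijection of lattices, which is immediate from the fact that it is a bijection of underlying sets that turns covering relations upside down.
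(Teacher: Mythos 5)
Your proposal is correct and follows essentially the same route as the paper: both invoke the reflection $T\mapsto\reverse{T}$ from Corollary~\ref{cor:combinatorialduality} as an order-reversing bijection (anti-isomorphism) between $\Tam{\nu}$ and $\Tam{\reverse{\nu}}$, so that joins are computed by conjugating the meet with reflection. You merely spell out the standard lattice-theoretic facts (anti-isomorphisms swap meet and join, and reflection is an involution) that the paper leaves implicit.
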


\begin{proof}
As in the proof of Corolary~\ref{cor:combinatorialduality}, the map $T \rightarrow \reverse T$ sends the $\nu$-Tamari lattice $\Tam{\nu}$ to the dual of $\Tam{\reverse \nu}$. The result follows.
\end{proof}

An example of the meet and join operation using bracket vectors is illustrated in Figure~\ref{fig:meetjoin}.

\begin{figure}[htbp]
	\begin{overpic}[width=\textwidth]{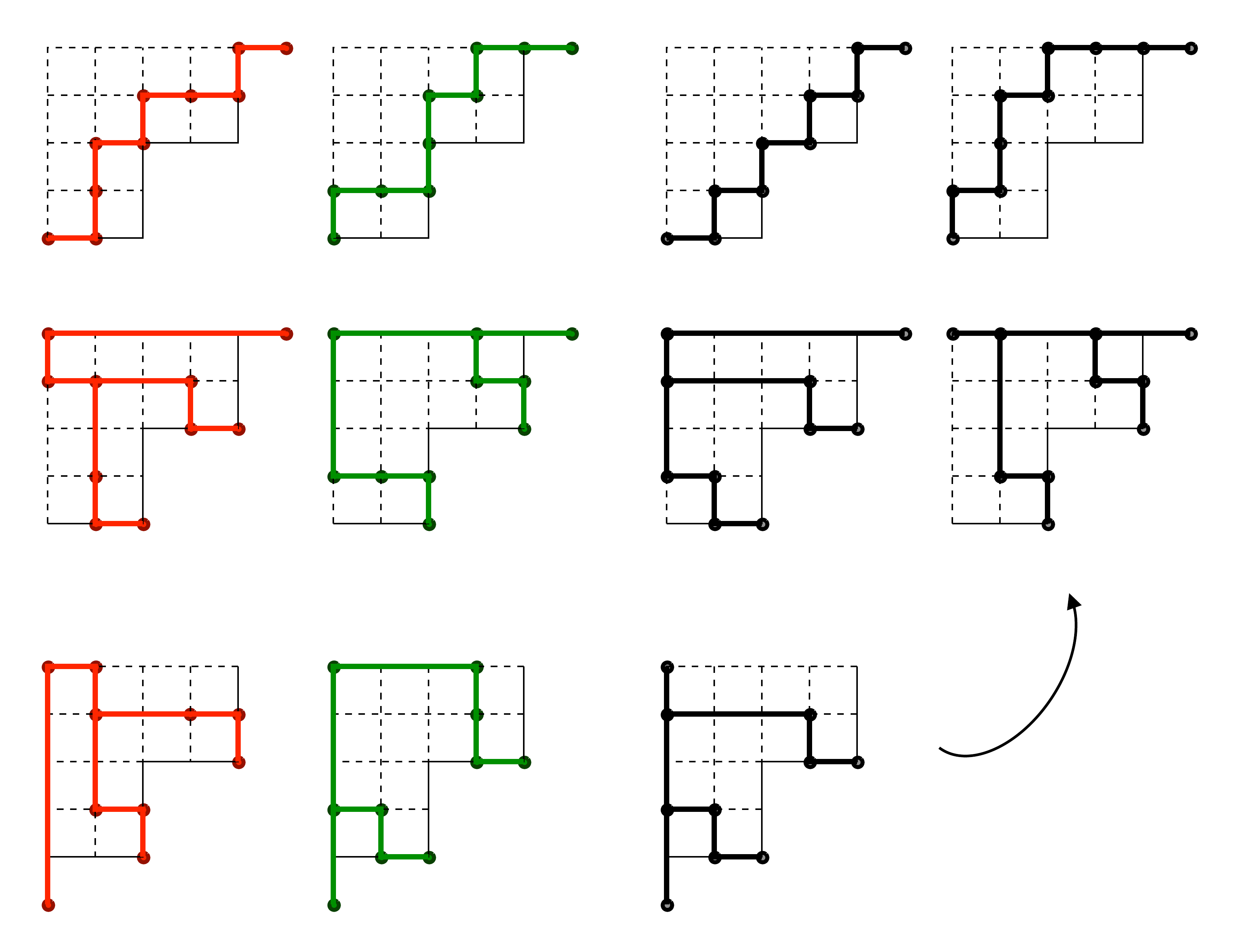}
	\put(12.5,75){$\mu$}
	\put(36,75){$\mu'$}
	\put(60,75){$\mu\wedge\mu'$}
	\put(83,75){$\mu\vee\mu'$}
	\put(12.5,51.5){\small $T$}
	\put(36,51.5){\small $T'$}
	\put(59.5,51.5){\small $T\wedge T'$}
	\put(82.5,51.5){\small $T\vee T'$}
	\put(3,31){\footnotesize $\sfb = 30{\bf0}{\bf1}32{\bf2}{\bf3}4{\bf4}$}
	\put(27,31){\footnotesize $\sfb'=11{\bf0}{\bf1}43{\bf2}{\bf3}4{\bf4}$}
	\put(51,31){\footnotesize $\sfb \wedge \sfb' = 10{\bf0}{\bf1}32{\bf2}{\bf3}4{\bf4}$}
	\put(12.5,24.5){\small $\reverse T$}
	\put(35.5,24.5){\small $\reverse T'$}
	\put(59,24.5){\small $\reverse T\wedge \reverse T'$}
	\put(2,0){\footnotesize $\reverse\sfb = {\bf0}52{\bf1}{\bf2}44{\bf3}{\bf4}{\bf5}$}
	\put(26,0){\footnotesize $\reverse\sfb'={\bf0}21{\bf1}{\bf2}53{\bf3}{\bf4}{\bf5}$}
	\put(50,0){\footnotesize $\reverse\sfb \wedge \reverse\sfb' = {\bf0}21{\bf1}{\bf2}43{\bf3}{\bf4}{\bf5}$}
	\put(12,35){\tiny$0$}
	\put(8,35){\tiny$0$}
	\put(8,39){\tiny$1$}
	\put(8,47){\tiny$3$}
	\put(4,47){\tiny$3$}
	\put(16,47){\tiny$3$}
	\put(16,43){\tiny$2$}
	\put(20,43){\tiny$2$}
	\put(4,51){\tiny$4$}
	\put(23,51){\tiny$4$}
	\put(27,51){\tiny$4$}
	\put(39,51){\tiny$4$}
	\put(46,51){\tiny$4$}
	\put(39,47){\tiny$3$}
	\put(43,47){\tiny$3$}
	\put(43,43){\tiny$2$}
	\put(27,39){\tiny$1$}
	\put(31,39){\tiny$1$}
	\put(35,39){\tiny$1$}
	\put(35,35){\tiny$0$}
	\put(4,24){\tiny$5$}
	\put(8,24){\tiny$5$}
	\put(8,20){\tiny$4$}
	\put(16,20){\tiny$4$}
	\put(20,20){\tiny$4$}
	\put(20,16){\tiny$3$}
	\put(8,12){\tiny$2$}
	\put(12,12){\tiny$2$}
	\put(12,8){\tiny$1$}
	\put(4.5,4){\tiny$0$}
	\put(27,24){\tiny$5$}
	\put(39,24){\tiny$5$}
	\put(39,20){\tiny$4$}
	\put(39,16){\tiny$3$}
	\put(43,16){\tiny$3$}
	\put(27.5,12){\tiny$2$}
	\put(31,12){\tiny$2$}
	\put(31,8){\tiny$1$}
	\put(35,8){\tiny$1$}
	\put(27.5,4){\tiny$0$}
	\end{overpic}
\caption{Computing the meet and join with bracket vectors.}
\label{fig:meetjoin}
\end{figure}

\section{The \texorpdfstring{$\nu$}{v}-Tamari lattice via subword complexes}
\label{sec:subwordComp}

The main goal of this section is to show that the $\nu$-Tamari lattice is isomorphic to the increasing-flip poset of a suitably chosen subword complex (Theorem~\ref{thm:TamariSubwordComplex}). 
This will be achieved through an identification of $\nu$-trees with certain reduced pipe dreams, which are closely related to the work of Rubey~\cite{rubey_maximal_2012} and Serrano and Stump~\cite{serrano_maximal_2012}. 

A \defn{pipe dream} is defined as a filling of a triangular shape with crosses \cross{} and elbows~\elbow{} so that all pipes (or lines) entering on the left side exit on the top side, see Figure~\ref{fig:vtree_pipedream} (right). 
Given a pipe dream $P$, we label the left ends of the lines with the numbers $1,2,\ldots$ from top to bottom, and transport these labels along the lines to get a labeling of top ends. We denote by $\pi(P)$ the permutation whose one-line representation is given by the top labels, read from left to right. 
In our example from Figure~\ref{fig:vtree_pipedream} (right), the permutation is $\pi(P)=[1,4,3,5,2,6]$.

A pipe dream is called \defn{reduced} if any two pipes have at most one intersection. Reduced pipe dreams play a fundamental role in the combinatorial understanding of Schubert polynomials; they were first considered by Fomin and Kirillov in~\cite{FominKirillov93} as a ``planar history'' of the inversions in a permutation, introduced as rc-graphs by Bergeron and Billey in~\cite{bergeron_rc-graphs_1993}, and further studied using the pipe dream terminology by Knutson and Miller in~\cite{knutson_grobner_2005}. Each crossing \cross{} is meant to represent the action of a transposition of the symmetric group, and the product of the transpositions associated to the crossings~\cross{} (in suitable order) gives a \defn{reduced expression} for~$\pi(P)$ (cf. Section~\ref{sec:pipeequalsubword}).

For the purpose of this section, we view the set of lattice points $\A_\nu$ weakly above a lattice path $\nu$ as lattice points of the smallest square grid such that all points in~$\A_\nu$ are strictly above the main diagonal of the grid, as in Figure~\ref{fig:vtree_pipedream} (left).

With this convention in mind, given a $\nu$-tree $T$, replace each point in $\A_\nu$ by an elbow \elbow{} if it belongs to $T$, and by a crossing \cross{} otherwise. Further replace each point above the main diagonal that is strictly below $\nu$ by an elbow \elbow{}. We obtain a pipe dream fitting inside a triangular shape $(n,n-1,\ldots,2,1)$. We denote by $\pi_\nu(T)$ its corresponding permutation.
An example is illustrated in Figure~\ref{fig:vtree_pipedream} (center).

\begin{figure}[htbp]
\includegraphics[width=\textwidth]{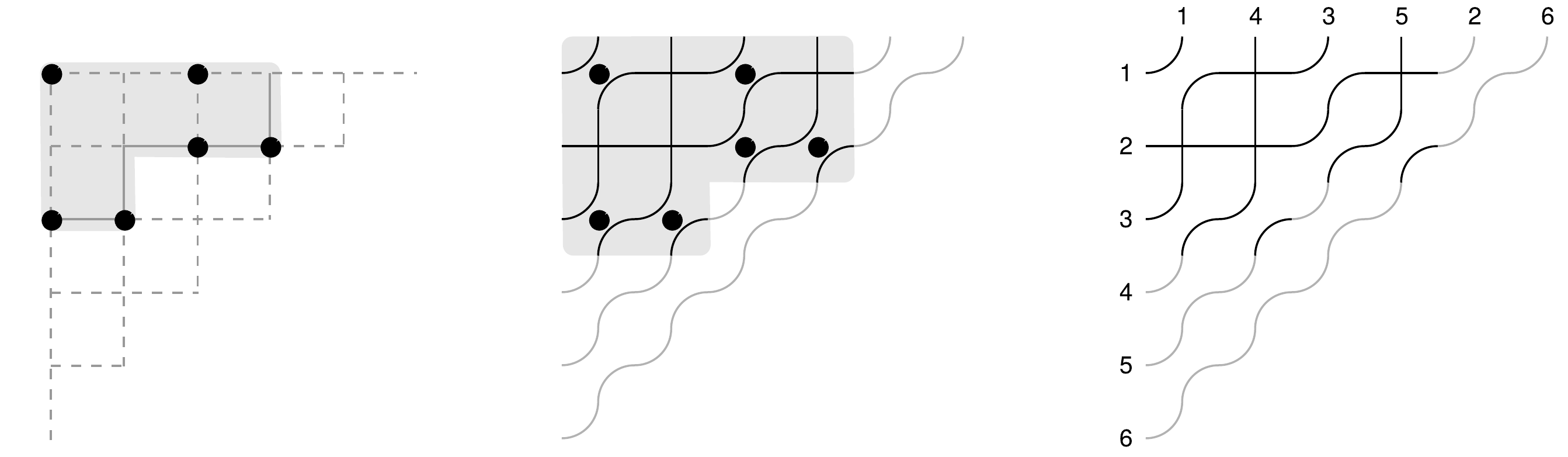}
\caption{The pipe dream representation of a $\nu$-tree.}
\label{fig:vtree_pipedream}
\end{figure}

\begin{proposition}[{\cite{bergeron_rc-graphs_1993}}]
\label{prop:vtreesfacets}
For a fixed $\nu$, the permutation $\pi_\nu:=\pi_\nu(T)$ is independent of the $\nu$-tree $T$. Moreover, 
$\nu$-trees give all reduced pipe dreams for~${\pi_\nu}$. 
\end{proposition}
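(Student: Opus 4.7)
The plan is to proceed in two stages. First, I would show that a right rotation $T\to T'=T\ssm\{q\}\cup\{q'\}$ induces a local move on pipe dreams that preserves the boundary permutation $\pi_\nu(T)$, and then invoke connectedness of the rotation graph to conclude that $\pi_\nu$ is independent of~$T$. Second, I would argue that $\nu$-tree pipe dreams are precisely the reduced pipe dreams for $\pi_\nu$ on the prescribed shape.

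For the first stage, consider the rectangle with corners $p$ (NW), $q$ (SW), $r$ (SE), $q'$ (NE) from Figure~\ref{fig:rotation}. Maximality and $\nu$-compatibility of $T$ and $T'$ force every non-corner cell of this rectangle to be a crossing in both pipe dreams: any interior cell is simultaneously NE of $q$ and SW of $q'$ with its bounding rectangle inside $\F_\nu$, so it is $\nu$-incompatible with $q$ in $T$ and with $q'$ in $T'$; the non-corner boundary cells are excluded by the ``no further nodes along the solid lines'' hypothesis. Hence the only local change is an elbow/crossing swap between $q$ and $q'$. Tracing pipes across the rectangle in both configurations, the three active pipes---entering at the left of the row of $p$, at the left of the row of $q$, and at the bottom of the column of $q$---exit at the top of the column of $p$, at the right of the row of $p$, and at the top of the column of $r$, respectively, with all other pipes passing straight through. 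Thus $\pi_\nu(T')=\pi_\nu(T)$, and by Lemma~\ref{lem:numberofnodes} the rotation graph is connected, so $\pi_\nu:=\pi_\nu(T)$ is well-defined. The pipe dream of any $\nu$-tree has $|\A_\nu|-|T|$ crossings in $\A_\nu$, a number constant by Lemma~\ref{lem:numberofnodes}; moreover, one verifies directly on $\Tmin$ (whose elbows form the leftmost column of $\A_\nu$ together with the east-step endpoints of $\nu$) that no two pipes cross twice, so every $\nu$-tree gives a reduced pipe dream for~$\pi_\nu$.

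For the converse, I would invoke the theorem of Bergeron and Billey~\cite{bergeron_rc-graphs_1993} that reduced pipe dreams for a fixed permutation are connected under chute moves. The set of $\nu$-tree pipe dreams is a nonempty subset of the reduced pipe dreams for $\pi_\nu$ that is closed under chute moves, since every chute move on such a pipe dream corresponds to a $\nu$-tree rotation (by reversing the analysis in the previous paragraph), and no chute move can flip a forced elbow below $\nu$ because such a flip would not preserve the permutation. By Bergeron--Billey connectedness, the $\nu$-tree pipe dreams exhaust the reduced pipe dreams for $\pi_\nu$. The main obstacle I expect is verifying the case analysis that every chute move compatible with the shape is indeed a $\nu$-tree rotation---in particular, ensuring that the chute rectangle lies inside $\A_\nu$ and never crosses the boundary $\nu$.
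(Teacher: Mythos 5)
Your route is the same as the paper's: identify right rotations with (general) chute moves on the associated pipe dreams, deduce that all $\nu$-trees yield one permutation $\pi_\nu$ via connectedness of the rotation graph (Lemma~\ref{lem:numberofnodes}), verify reducedness on $\Tmin$ and propagate it using the constant crossing count, and obtain the converse from the Bergeron--Billey connectivity of reduced pipe dreams under chute moves. The only real difference is that you verify permutation-invariance of a rotation by tracing the pipes locally, where the paper simply cites~\cite{bergeron_rc-graphs_1993}; your local analysis (interior and non-corner cells of the rectangle are crossings by $\nu$-incompatibility, and the three affected pipes have the same entry/exit pattern before and after the swap) is correct and makes that step self-contained.

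The one genuine weak point is your justification of the closure step in the converse. You dismiss the possibility of a chute move placing a cross strictly below $\nu$ ``because such a flip would not preserve the permutation,'' but this is circular: a chute move preserves the permutation by definition (that is exactly~\cite[Lemma~3.5]{bergeron_rc-graphs_1993}), so permutation-preservation cannot exclude any chute move. What must actually be shown is that in a $\nu$-tree pipe dream every chutable rectangle has its southwest corner weakly above $\nu$ --- equivalently, that every reduced pipe dream for $\pi_\nu$ has all its crossings inside $\A_\nu$, so that its elbows in $\A_\nu$ form a maximal $\nu$-compatible set and Lemma~\ref{lem:flipsequalrotations} applies. This containment is precisely the nontrivial content of the statement (it is the $k=1$ case of \cite[Theorem~3.2]{rubey_maximal_2012} and \cite[Theorem~2.1]{serrano_maximal_2012}, as the remark following Proposition~\ref{prop:vtreesfacets} records), and the paper's own proof does not rederive it but attributes it to~\cite{bergeron_rc-graphs_1993}. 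You rightly flag this case analysis as the main obstacle, but as written your argument assumes at this step what it is trying to prove; to complete it you must either carry out the rectangle analysis along $\nu$ in detail, or, as the paper does, invoke the cited results.
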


\begin{proof}
In the language of reduced pipe dreams, right rotations on $\nu$-trees correspond to \defn{(general) chute moves} (as defined and illustrated in Figure~\ref{fig:chutemove})\footnote{These chute moves are slightly more general than the (two sided) chute moves originally defined by Bergeron and Billey in~\cite{bergeron_rc-graphs_1993}. This more general version was defined by Rubey in~\cite{rubey_maximal_2012}.}. 
\begin{figure}[htbp]
	\begin{overpic}[width=0.8\textwidth]{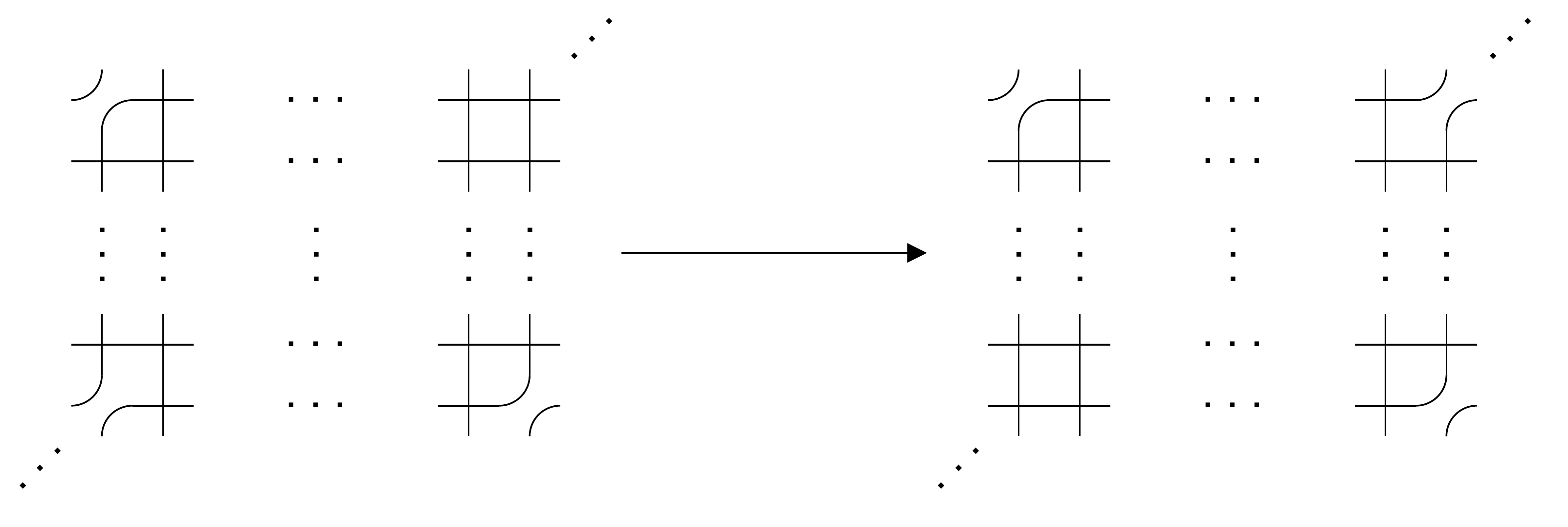}
	\put(40,19){Chute move}
	\end{overpic}
\caption{Rotations of $\nu$-trees correspond to chute moves in pipe dreams.}
\label{fig:chutemove}
\end{figure}
Since such moves do not alter the permutation of the pipe dream~\cite[Lemma 3.5]{bergeron_rc-graphs_1993}, different $\nu$-trees give rise to the same permutation. 
Since all reduced pipe dreams of a permutation are connected by chute and inverse chute moves~\cite[Thm.~3.7]{bergeron_rc-graphs_1993}, $\nu$-trees and reduced pipe dreams for $\pi_\nu$ coincide.
In order to show that the pipe dreams are reduced, it suffices to check it for one tree; this holds for $\Tmin$.
\end{proof}

\begin{remark}
Proposition~\ref{prop:vtreesfacets} is a special case of Rubey's result~\cite[Theorem~3.2]{rubey_maximal_2012} where $k=1$ and the polyomino is chosen to be a Ferrers shape, as well as the special case of Serrano and Stump's result~\cite[Theorem 2.1]{serrano_maximal_2012} for $k=1$.  
\end{remark}

\subsection{Reduced pipe dreams as facets of subword complexes}
\label{sec:pipeequalsubword}

Reduced pipe dreams for a permutation $w$ can be identified with the facets of certain subword complex~\cite[Section 1.8]{knutson_grobner_2005}. These complexes were introduced by Knutson and Miller for Coxeter groups in~\cite{knutson_subword_2004}, and reduced pipe dreams are a special case corresponding to the symmetric group. 

Let us briefly recall some basic notions relating to subword complexes. Since we are only working with the symmetric group, we restrict our presentation to this level of generality. Let $\frS_{n+1}$ be the symmetric group of permutations of $[n+1]$, and $S=\{s_1,\dots,s_n\}$ be the generating set of simple transpositions $s_i= (i\ i+1)$. Every element~$w\in \frS_{n+1}$ can be written as a product $w=s_{i_1}s_{i_2}\dots s_{i_k}$ of elements in $S$. If~$k$ is minimal among all such expressions for $w$, then $k$ is called the \defn{length} $\ell(w)$ of $w$, and $s_{i_1}s_{i_2}\dots s_{i_k}$ is called a \defn{reduced expression} for $w$. 

\begin{definition}[\cite{knutson_subword_2004}]\label{defn:subword}
Let~$Q=(q_1,\dots,q_m)$ be a word in $S$ and $\pi\in \frS_{n+1}$ be an element of the group. The \defn{subword complex}~$\subwordComplex{Q}{\pi}$ is a simplicial complex whose facets (maximal faces) are given by subsets $I\subset [m]=\{1,2,\ldots,m\}$, such that the subword of~$Q$ with positions at~$[m]\ssm I$ is a reduced expression of~$\pi$.

Two facets $I$ and $J$ are \defn{adjacent} if they differ by one single element, that is $I\smallsetminus i=J\smallsetminus j$. The operation of replacing $i$ by $j$ to go from $I$ to $J$ is called a \defn{flip}.   
The flip from $I$ to $J$ is called \defn{increasing} if $i<j$.
The \defn{increasing flip poset} of~$\subwordComplex{Q}{\pi}$ is the partial order on its facets, whose covering relations correspond to increasing flips. The \defn{facet adjacency graph} of~$\subwordComplex{Q}{\pi}$ is the graph whose vertices are facets of~$\subwordComplex{Q}{\pi}$ and edges correspond to pairs of adjacent facets. 
\end{definition}

\begin{example}
Let $n=2$ and $S=\{s_1,s_2\}=\{(1\ 2), (2\ 3)\}$. Let $\pi=[2,3,1]=s_1s_2$ and $Q=(q_1,q_2,q_3,q_4,q_5)=(s_1,s_2,s_1,s_2,s_1)$. 
Since the reduced expressions of $\pi$ in $Q$ are given by $q_1q_2=q_1q_4=q_3q_4=\pi$, 
the facets of~$\subwordComplex{Q}{\pi}$ are $\{3,4,5\},\{2,3,5\}$ and $\{1,2,5\}$. 
The increasing flips are:
\[
\{1,2,5\} \rightarrow \{2,3,5\} \rightarrow \{3,4,5\}. 
\]
This subword complex is illustrated in Figure~\ref{fig:A2}.

\begin{figure}[htbp]
\begin{center}
\includegraphics[width=0.4\textwidth]{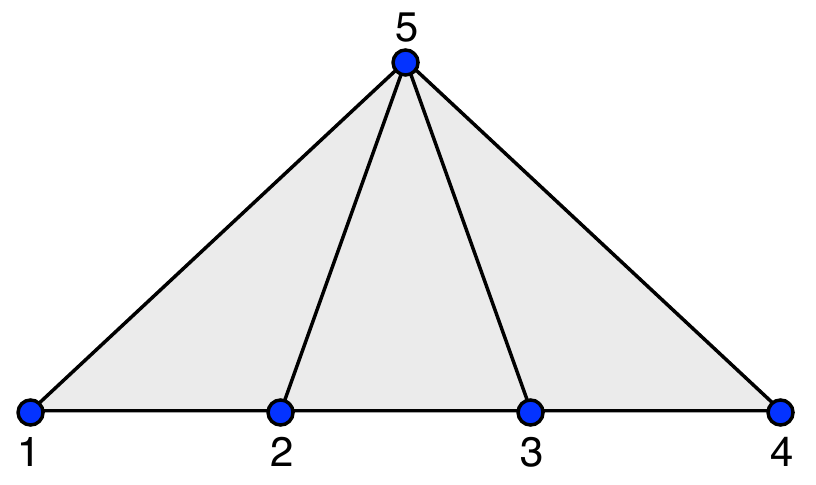}
\caption{Subword complex~$\subwordComplex{Q}{\pi}$ for $Q=(s_1,s_2,s_1,s_2,s_1)$ and $\pi=s_1s_2$. Its maximal faces are $\{3,4,5\},\{2,3,5\}$ and $\{1,2,5\}$.}
\label{fig:A2}
\end{center}
\end{figure}

\end{example}

For a fixed lattice path $\nu$, recall that~$\F_\nu$ is the Ferrers diagram that lies weakly above $\nu$. For a lattice point $p$ in $\F_\nu$, denote by $d(p)$ the lattice distance from $p$ to the top-left corner of $\F_\nu$. Set $\hat d=\max_{p\in\F_\nu}d(p)$. We denote by $\pi_\nu$ the permutation in $\frS_{\hat d+2}$ whose Rothe diagram (i.e.\ the set $\{(\pi(j),i)\colon i<j, \pi(i)>\pi(j)\}$) is equal to $\F_\nu$ with its northwest block lying at (2,2).
Now label each integer lattice point $p$ in $\F_\nu$ by the transposition~$s_{d(p)+1}$. Define $Q_\nu$ as the word obtained 
by reading the labels of each row from left to right, and the rows from bottom to top.
 See Figure~\ref{fig:vTamariSubwordcomplexes} (compare~\cite{serrano_maximal_2012}).
 
 \begin{figure}[bht]
\begin{center}
\includegraphics[width= 0.85\textwidth]{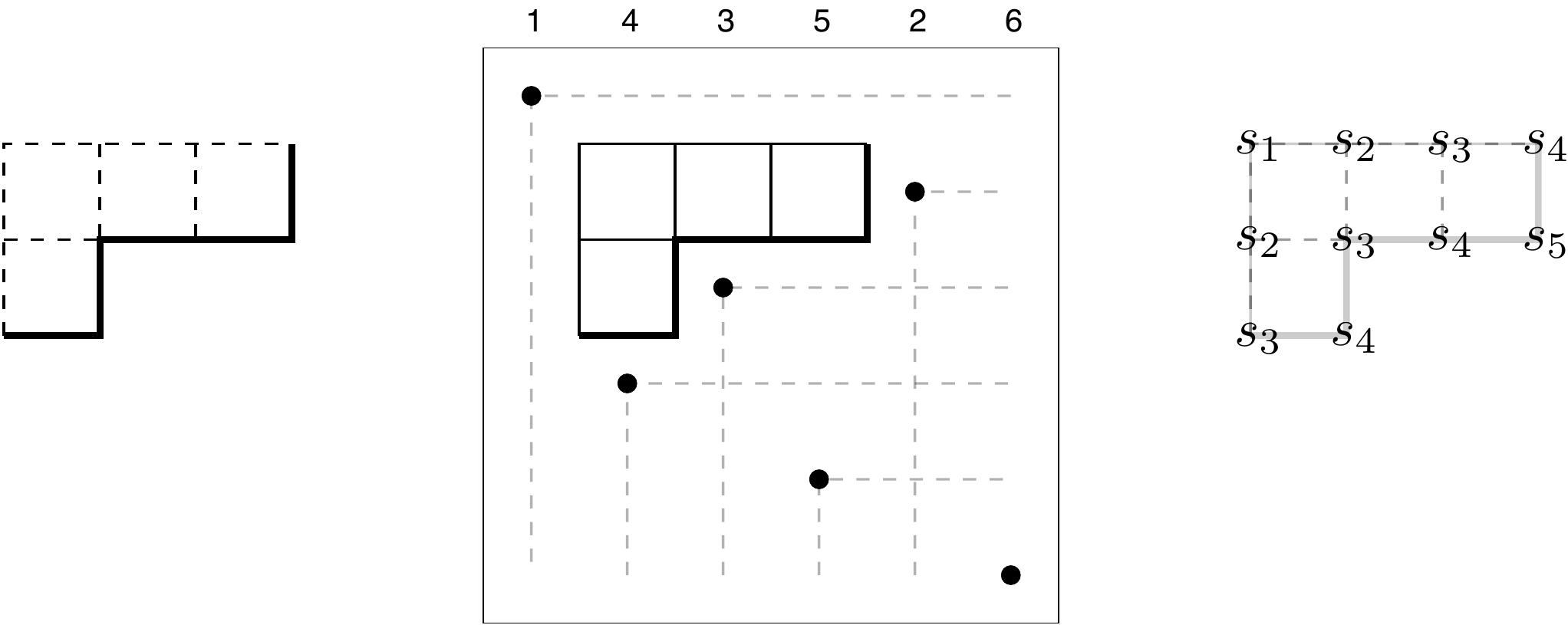}
\caption{Lattice path $\nu=\sfE\sfN\sfE\sfE\sfN$ and its corresponding Ferrers diagram $\F_\nu$(left). Rothe diagram of the permutation $\pi_\nu=[1,4,3,5,2,6]$ (middle). Corresponding word 
$Q_\nu=(s_3,s_4, s_2, s_3, s_4, s_5, s_1, s_2, s_3, s_4)$
(right).}\label{fig:vTamariSubwordcomplexes}
\end{center}
\end{figure}

Thus, from a $\nu$-tree $T$ one gets a reduced expression for $\pi_\nu$ as the product of the transpositions in $Q_\nu$ corresponding to points of $\A_\nu$ \textbf{not} in $T$. Figure~\ref{fig:vtree_expression} illustrates this, along with the effect of a rotation.

\begin{figure}[htbp]
\begin{center}
	\begin{overpic}[width=0.6\textwidth]{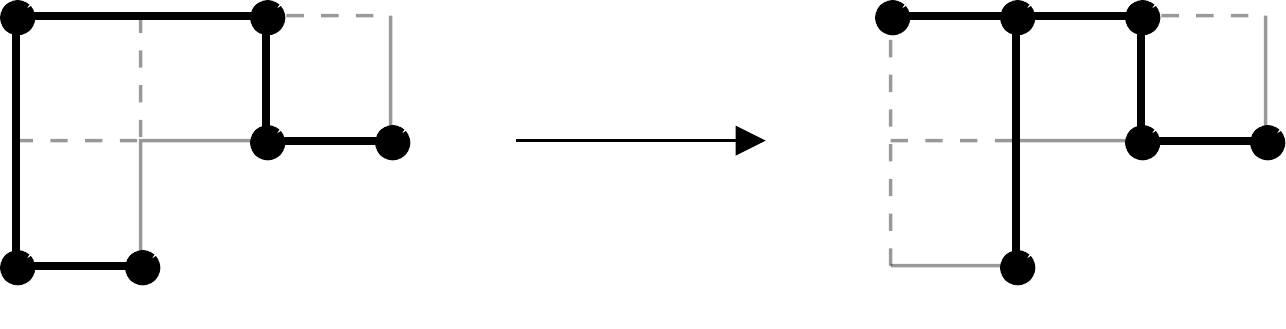}
		\put(-8,-1){$s_2s_3s_2s_4=$ {\footnotesize $[1,4,3,5,2,6]$}}
		\put(1.5,11.5){$s_2$}
		\put(11,11.5){$s_3$}
		\put(11,21.5){$s_2$}
		\put(30,21.5){$s_4$}
		\put(60,-1){$s_3s_2s_3s_4=$ {\footnotesize $[1,4,3,5,2,6]$}}
		\put(67.5,13.5){$s_2$}
		\put(79.5,13.5){$s_3$}
		\put(67.5,4.5){$s_3$}
		\put(96,21.5){$s_4$}		
		\put(41.5,18){\small rotation}
	\end{overpic}
\caption{Complements of $\nu$-trees are the reduced expressions of $\pi_\nu$ in $Q_\nu$.}
\label{fig:vtree_expression}
\end{center}
\end{figure}

\begin{theorem}\label{thm:TamariSubwordComplex}
The $\nu$-Tamari lattice is isomorphic to the increasing flip poset of the subword complex $\subwordComplex{Q_\nu}{\pi_\nu}$.
\end{theorem}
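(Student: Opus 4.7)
The plan is to combine Theorem~\ref{thm:rotationlatticetrees} with the identification of $\nu$-trees with facets of $\subwordComplex{Q_\nu}{\pi_\nu}$, and then verify that right rotations correspond to increasing flips.

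First I would set up the bijection between $\nu$-trees and facets. By the construction of $Q_\nu$ (reading the labels row by row from bottom to top, and each row from left to right), the positions of $Q_\nu$ are in one-to-one correspondence with the lattice points of $\A_\nu$. Combining Proposition~\ref{prop:vtreesfacets} with Definition~\ref{defn:subword}, the map sending a $\nu$-tree $T$ to the set $F(T)$ of positions in $Q_\nu$ of its nodes (the elbows) is a bijection between $\nu$-trees and facets of $\subwordComplex{Q_\nu}{\pi_\nu}$: indeed, the complement of $F(T)$ in $[|Q_\nu|]$ records the crosses of the reduced pipe dream associated to $T$, whose product in the prescribed order is precisely $\pi_\nu$.

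Next I would show that this bijection matches covers. By Lemma~\ref{lem:flipsequalrotations}, two $\nu$-trees are related by a rotation if and only if they differ by exactly one node, which is exactly the flip-adjacency condition for the facets $F(T), F(T')$. To see that right rotations correspond to \emph{increasing} flips, recall from Figure~\ref{fig:rotation} that a right rotation replaces a node $q$ by a node $q'$ lying strictly northeast of $q$, and in particular on a row strictly above the row of $q$. Because $Q_\nu$ is read row by row from bottom to top, the position of $q'$ in $Q_\nu$ is strictly larger than that of $q$. Hence removing $q$ from $F(T)$ and inserting $q'$ is an increasing flip, and conversely every increasing flip is obtained in this way.

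Combining Theorem~\ref{thm:rotationlatticetrees} with the bijection $T\mapsto F(T)$ and the above matching of covers concludes the proof: the $\nu$-Tamari lattice and the increasing flip poset of $\subwordComplex{Q_\nu}{\pi_\nu}$ have the same underlying sets, the same cover relations, and hence the same transitive closures. I do not foresee a substantial obstacle, since the argument is essentially a compilation of results already established in the paper; the only point requiring attention is to confirm that the bottom-to-top reading convention of $Q_\nu$ aligns the geometric direction of right rotation with the combinatorial direction of increasing flips.
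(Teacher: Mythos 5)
Your proposal is correct and follows essentially the same route as the paper's proof: it combines Theorem~\ref{thm:rotationlatticetrees}, the identification of $\nu$-trees with facets via Proposition~\ref{prop:vtreesfacets}, and the flip--rotation equivalence of Lemma~\ref{lem:flipsequalrotations}. Your explicit check that the bottom-to-top reading order of $Q_\nu$ makes right rotations correspond to increasing flips is a detail the paper leaves implicit, but it is not a different argument.
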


\begin{proof}
By Theorem~\ref{thm:rotationlatticetrees}, the $\nu$-Tamari lattice is isomorphic to the rotation lattice of $\nu$-trees. The facets of the subword complex $\subwordComplex{Q_\nu}{\pi_\nu}$ are in correspondence with $\nu$-trees by Proposition~\ref{prop:vtreesfacets}. Two facets are related by an increasing flip if and only if the corresponding $\nu$-trees are related by a right rotation by~Lemma~\ref{lem:flipsequalrotations}.
\end{proof}

\begin{remark}
Theorem~\ref{thm:TamariSubwordComplex} is equivalent to Theorem~\ref{thm:rotationlatticetrees} and Lemma~\ref{lem:flipsequalrotations} together with any of the following two results:
\begin{itemize}
\item The specialization of Rubey's result~\cite[Theorem~3.2]{rubey_maximal_2012} for $k=1$ and the polyomino being a Ferrers shape.
\item The specialization of Serrano and Stump's result~\cite[Theorem 2.1]{serrano_maximal_2012} for $k=1$.  
\end{itemize}
Note that Lemma~\ref{lem:flipsequalrotations} implies that all flips between pipe dreams with permutation~$\pi_\nu$ are (general) chute moves. 
This is not true for arbitrary permutations, which exhibit flips that do not correspond to chute moves.
\end{remark}

\begin{remark}\label{rem_vTamaricomplex}
The set of $\nu$-trees is naturally equipped with the simplicial complex structure of the corresponding subword complex, where covering pairs of $\nu$-trees represent adjacent facets. This generalization of the simplicial associahedron was already considered in the context of the $\nu$-Tamari lattice in~\cite{tams2017}.
\end{remark}

\subsection{Rubey's lattice conjecture}
The collection of reduced pipe dreams of a permutation $w$ can be equipped with a natural poset structure determined by (general) chute moves. Rubey formulated the following conjecture in~\cite[Conjecture~2.8]{rubey_maximal_2012}.

\begin{conjecture}[Rubey's Lattice Conjecture~\cite{rubey_maximal_2012}]
The poset of reduced pipe dreams of a permutation $w$, whose covering relations are defined by (general) chute moves, is a lattice.
\end{conjecture}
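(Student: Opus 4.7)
The plan is to extend the bracket-vector framework of Section~\ref{sec:brackets} from the Ferrers case $w=\pi_\nu$ to an arbitrary permutation $w\in\frS_{n+1}$. For $\pi_\nu$, the crucial facts were that reduced pipe dreams biject with $\nu$-trees, $\nu$-trees biject with $121$-avoiding bracket vectors, and chute moves correspond to entry-wise increases, so that componentwise minimum of bracket vectors furnishes the meet. I want to replicate these three steps in the general setting.

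First, I would fix the minimal reduced pipe dream $P_\min(w)$ of $w$---the pipe dream whose crossings are top-and-left flushed within the Rothe diagram $D(w)$, corresponding to the lex-smallest reduced word of $w$. For each non-crossing cell $p$ in the triangular grid of a reduced pipe dream $P$ of $w$, define $\ell_P(p)$ to be the row index of the pipe passing through $p$, producing a vector $\sfb(P)$ by reading these labels in an order analogous to the in-order traversal used for $\nu$-trees (one plausible candidate is a traversal anchored at the essential cells of Fulton). An immediate verification mirroring Lemma~\ref{lem:bracketvectorrotation} should show that a chute move acts on $\sfb(P)$ by increasing a single entry, and hence that the chute-move cover relations are covering relations for the componentwise order on the image of $\sfb$.

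Second, I would characterize the image $\{\sfb(P)\colon P\text{ reduced pipe dream of }w\}$ by: (i) fixed-position identities dictated by the essential set of $D(w)$; (ii) the entrywise lower bound $\sfb(P_\min(w))$ and an entrywise upper bound $\sfb(P_\max(w))$; and (iii) a local monotonicity/pattern-avoidance condition generalizing the $121$-avoidance of Definition~\ref{def:bracketvectors}, encoding reducedness. With such a characterization, closure under componentwise minimum would give the meet, and the join would follow by invoking the duality $w\leftrightarrow w^{-1}$ (which interchanges chute moves with their inverses), in direct analogy with Corollary~\ref{cor:combinatorialduality} and Proposition~\ref{prop:meet}.

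The main obstacle is step (iii): in the Ferrers case, $121$-avoidance follows cleanly from the rectangular geometry of $\F_\nu$, whereas Rothe diagrams of general permutations can be highly non-convex, and a naive in-order reading will typically fail to produce a $121$-avoiding sequence even for $P_\min(w)$. Resolving this likely requires choosing a reading order adapted to the Lehmer code of $w$, perhaps one arising from a pipe-dream analogue of the Edelman--Greene insertion of Proposition~\ref{prop:EdelmanGreene}. A secondary obstacle is that for general $w$ the chute-move poset is a \emph{proper} subposet of the flip poset of the subword complex $\subwordComplex{Q}{w}$, and so cannot inherit a lattice structure from standard subword-complex theory; the bracket-vector encoding earns its keep precisely here, since entrywise operations must be shown to correspond to honest sequences of chute moves (never of exotic non-chute flips), which is automatic only once the image characterization in step (iii) has been pinned down.
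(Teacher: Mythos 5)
You should first note that the paper does not prove this statement at all: it is stated as a \emph{conjecture} (due to Rubey), and the paper's contribution is only the special case settled in Theorem~\ref{thm_RubeysConjecture}, namely $w=1\oplus u$ with $u$ dominant, where the reduced pipe dreams are exactly the $\nu$-trees, all flips are chute moves (Lemma~\ref{lem:flipsequalrotations}), and the chute poset is identified with the $\nu$-Tamari lattice via Theorem~\ref{thm:TamariSubwordComplex}. So a complete argument for the general statement would be a new result going beyond the paper, and your proposal does not supply one: it is a program whose load-bearing steps are left open. Concretely, (a) you never define the reading order or the labels $\ell_P(p)$ precisely for a general Rothe diagram, so the claimed analogue of Lemma~\ref{lem:bracketvectorrotation} (a chute move increases exactly one entry) is unverified --- and the paper's proof of that lemma uses the in-order traversal of a $\nu$-tree, a structure that has no evident counterpart when $D(w)$ is not a Ferrers shape; (b) step (iii), the characterization of the image of $\sfb$ by a pattern condition closed under componentwise minimum, is exactly where the lattice property would be proved, and you acknowledge yourself that $121$-avoidance fails for general $w$; without it the ``meet = componentwise min'' step (the analogue of Proposition~\ref{prop:meet}) has no content.

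A further concrete flaw is the proposed join argument via the duality $w\leftrightarrow w^{-1}$. In the Ferrers case the join is obtained from the meet through the reflection $T\mapsto\reverse{T}$ of Corollary~\ref{cor:combinatorialduality}, which is a symmetry special to Ferrers shapes sending right rotations to left rotations. For general $w$, transposing a pipe dream across the main diagonal does produce a pipe dream for $w^{-1}$, but it conjugates chute moves into \emph{ladder} moves, not into inverse chute moves, so the chute-move poset of $w^{-1}$ is not in any obvious way anti-isomorphic to that of $w$; this step would need an entirely different justification (or one would have to prove existence of joins directly, e.g.\ from existence of meets plus a global maximum, which again hinges on the unproved step (iii)). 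In short, the proposal correctly identifies the ingredients that make the paper's special case work, but extending them to arbitrary permutations is precisely the open problem, and none of the extensions is established.
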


An important class of permutations arising from the theory of Schubert polynomials is the class of {dominant permutations}, see for instance~\cite{manivel_symmetric_2001}.
A \defn{dominant permutation} is a permutation whose Rothe diagram is the shape of a partition with its northwest block located at position $(1,1)$. Those are permutations avoiding the pattern $132$. 
For $u\in \mathfrak{S}_m$ and $v\in \mathfrak{S}_n$ we denote by $u\oplus v\in \mathfrak{S}_{m+n}$
the permutation defined by 
\[
(u\oplus v)(i) =
  \begin{cases}
    u(i), & \text{for } 1 \leq i \leq m \\
    v(i)+m, & \text{for } m < i \leq m+n.
  \end{cases}
\]

The collection of permutations $\pi_\nu$ associated to lattice paths $\nu$ are exactly the permutations of the form $w=1\oplus u$, where $u$ is a dominant permutation. As a direct consequence of Theorem~\ref{thm:TamariSubwordComplex} we get that Rubey's conjecture holds for a special class of permutations determined by dominant permutations:

\begin{theorem}
\label{thm_RubeysConjecture}
Rubey's Lattice Conjecture holds for permutations $w=1\oplus u$ where~$u$ is a dominant permutation.
\end{theorem}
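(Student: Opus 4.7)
The plan is to deduce Theorem~\ref{thm_RubeysConjecture} as essentially a direct corollary of Theorem~\ref{thm:TamariSubwordComplex}, once the permutations $w = 1\oplus u$ (with $u$ dominant) are identified with the permutations $\pi_\nu$ attached to lattice paths, and once one checks that the chute-move poset coincides with the increasing-flip poset in this setting.

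First, I would verify the identification already asserted in the excerpt: every permutation $w = 1\oplus u$ with $u$ dominant arises as $\pi_\nu$ for some lattice path $\nu$. Since $u$ is dominant, its Rothe diagram is a partition-shaped Young diagram $\lambda$ with northwest block at $(1,1)$. The Rothe diagram of $w = 1\oplus u$ is the same shape $\lambda$ translated by $(1,1)$, so its northwest block lies at $(2,2)$. Taking $\nu$ to be the lattice path tracing the southeast boundary of $\lambda$, we obtain $\F_\nu = \lambda$, and by construction $\pi_\nu = w$. Conversely, every $\pi_\nu$ is clearly of this form, since $\F_\nu$ is a partition shape and $\pi_\nu$ fixes $1$.

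Next, I would invoke Proposition~\ref{prop:vtreesfacets} to identify the set of reduced pipe dreams of $\pi_\nu$ with the set of $\nu$-trees, under a correspondence in which right rotations of $\nu$-trees translate into general chute moves on pipe dreams. Combining this with Lemma~\ref{lem:flipsequalrotations} (and the remark following Theorem~\ref{thm:TamariSubwordComplex}), every flip in the subword complex $\mathcal{SC}(Q_\nu,\pi_\nu)$ is realized by a chute move on the corresponding pipe dreams. Consequently, Rubey's poset of reduced pipe dreams of $\pi_\nu$, with covering relations given by chute moves, coincides with the increasing-flip poset of $\mathcal{SC}(Q_\nu,\pi_\nu)$.

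Finally, Theorem~\ref{thm:TamariSubwordComplex} provides an isomorphism between this increasing-flip poset and the $\nu$-Tamari lattice, and the latter is a lattice by Theorem~\ref{thm:PRVlattice} (or by the authors' own Theorems~\ref{thm:rotationlattice} and~\ref{thm:bracketlattice}). Thus Rubey's Lattice Conjecture holds for $w = 1\oplus u$. The argument is essentially assembly: the substantive work has been done in the preceding sections. The only point requiring a moment's care is the equivalence of flips and chute moves, which is crucial because it fails for general permutations; here it is guaranteed by the restriction to permutations of the form $1 \oplus u$ with $u$ dominant via Lemma~\ref{lem:flipsequalrotations}.
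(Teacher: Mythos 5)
Your proof is correct and follows the paper's own argument: the paper likewise deduces the theorem from Theorem~\ref{thm:TamariSubwordComplex} together with Lemma~\ref{lem:flipsequalrotations} (identifying chute moves with flips, hence the pipe-dream poset with the increasing-flip poset) and the known lattice property of the $\nu$-Tamari lattice. Your explicit check that every $w=1\oplus u$ with $u$ dominant arises as $\pi_\nu$ simply spells out what the paper asserts just before the theorem.
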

\begin{proof}
By Theorem~\ref{thm:TamariSubwordComplex} and Lemma~\ref{lem:flipsequalrotations}, the poset of reduced pipe dreams of $w$ is isomorphic to a $\nu$-Tamari lattice, which is known to be lattice.
\end{proof}

\subsection{The Edelman--Greene correspondence}
Using Edelman--Greene insertion, Woo~\cite{woo_catalan_2004} described a bijection between pipe dreams with permutation $[1,n+1,n,\dots,2]$ and Dyck paths with $2n$ steps. His bijection extends trivially to a bijection between pipe dreams with permutation $w=1\oplus u$ and $\nu$-paths, where $u$ is a dominant permutation and $\nu$ is the path whose Ferrers diagram $\F_\nu$ equals the Rothe diagram of $u$. 
As noticed in Proposition~\ref{prop:vtreesfacets}, pipe dreams with permutation $w=1\oplus u$ are in correspondence with $\nu$-trees. The purpose of this section is to show that Woo's bijection from $\nu$-trees (when regarded as pipe dreams) to $\nu$-paths coincides with the left flushing bijection $\lflush$ from Section~\ref{sec:RotationlatticeTamarilatticeIsomorphism}. 

\begin{remark}
In~\cite{serrano_maximal_2012}, Serrano and Stump extended Woo's result to a bijection between pipe dreams with permutation $w=[1,2,\dots,k-1] \oplus u$ and $k$-tuples of nested $\nu$-paths, and used it to describe a bijection between $k$-triangulations of a polygon and $k$-tuples of nested Dyck paths. 
\end{remark}

Let $\nu$ be path and $w=1\oplus u$, where $u$ is the dominant permutation whose Rothe diagram is equal to $\F_\nu$. Edelman-Greene's (column) correspondence associates to each $\nu$-tree $T$ a pair $(\X,\Y)$ of an insertion tableau and a recording tableau, as follows:

Let $T$ be a $\nu$-tree. We denote by~$\A_\nu$ be the set of lattice points weakly above~$\nu$ in~$\F_\nu$. 
Each point in $\A_\nu$ has a coordinate $(i,j)$ where $i$ stands for the $i$th row from top to bottom and $j$ stands for the $j$th column from left to right. 
The \defn{reading biword} of $T$ is the array ${a_1,\dots, a_\ell}\choose{b_1,\dots,b_\ell}$ obtained by reading $i \choose i+j-1$ for every point $(i,j)$ in the complement $T^c:=\A_\nu\smallsetminus T$, row by row from right to left and from top to bottom.  

We insert the letters of the word formed by the bottom row using column Edelman--Greene insertion~\cite{edelmangreene_balanced_1987} into a tableau, while recording the corresponding letters from the first row. This produces an insertion tableau $\X=\X(T)$ and a recording tableau $\Y=\Y(T)$. An example is illustrated in Figure~\ref{fig_EdelmanGreene}.

We briefly recall the column \defn{Edelman--Greene insertion} for completeness. When we insert a letter $i$ into a column:
\begin{itemize}
\item if all numbers in that column are smaller than or equal to~$i$, we append $i$ to that column;
\item if the column contains both $i$ and $i+1$, it remains unchanged and an $i+1$ is bumped to the next column;
\item otherwise, we replace the smallest number $j$ greater than $i$ in that column by $i$ and bump $j$ to the next column. 
\end{itemize}

\begin{figure}[htbp]
\begin{center}
\includegraphics[width= 0.8\textwidth]{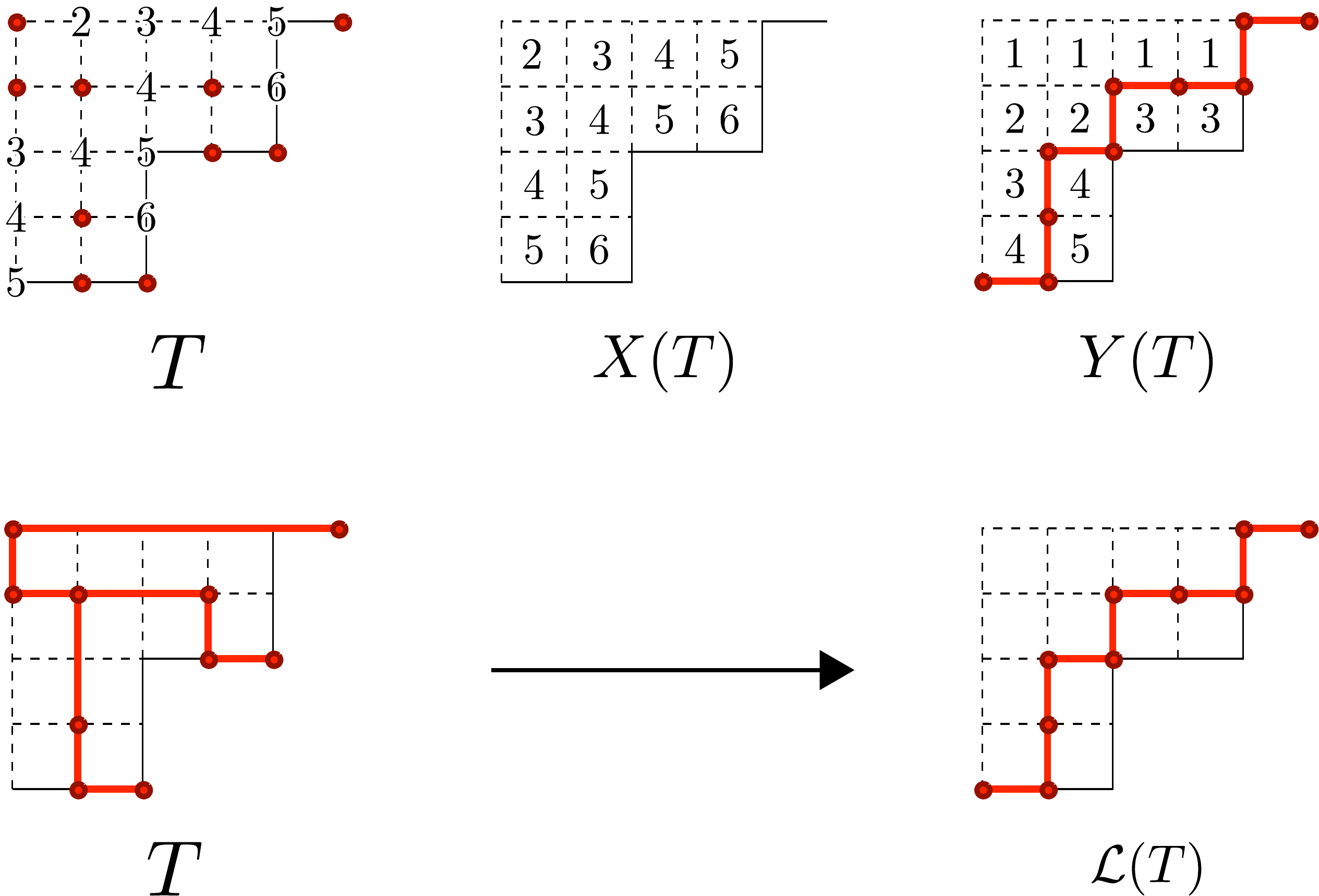}
\caption{The $\nu$-path obtained applying the column Edelman-Greene correspondence to a $\nu$-tree $T$ coincides with the left flushing $\lflush (T)$. The reading biword in this case is~\usebox{\smlmat}.}
\label{fig_EdelmanGreene}
\end{center}
\end{figure}

\begin{lemma}[{\cite[Lemma~3.1]{serrano_maximal_2012}}]
For every $\nu$-tree $T$, the shapes of $\X(T)$ and $\Y(T)$ are given by $\F_\nu$.
\end{lemma}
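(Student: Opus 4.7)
My plan is to reduce the claim to standard facts about column Edelman--Greene insertion applied to reduced expressions of dominant permutations. By Proposition~\ref{prop:vtreesfacets}, the complement $T^c = \A_\nu \setminus T$ is the set of crossings of a reduced pipe dream for $\pi_\nu = 1 \oplus u$, and the transposition attached to a crossing at position $(i,j)$ is $s_{i+j-1}$, which is exactly the bottom entry of the reading biword at the pair coming from $(i,j)$. The first step is to check that reading these transpositions row by row, right to left, top to bottom produces a reduced expression for $\pi_\nu$; this is compatible with the pipe structure, since between any pair of pipes the crossings they undergo are read in a consistent order and no two pipes cross twice.

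Once the bottom word of the biword is known to be a reduced expression for $\pi_\nu$, I would invoke the Edelman--Greene theorem: column insertion of any reduced word for a permutation $w$ produces a pair $(\X, \Y)$ of tableaux of a common shape, and the shapes that arise as one varies over all reduced words of $w$ are exactly the supports of the Schur expansion of the Stanley symmetric function $F_w$. In particular, $\X(T)$ and $\Y(T)$ automatically share their shape, so it suffices to compute this common shape.

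To finish, I would exploit that $u$ is dominant (its Rothe diagram is the Ferrers shape $\F_\nu$), so $F_u = s_{\F_\nu}$ by the classical evaluation of Stanley symmetric functions on $132$-avoiding permutations. Column Edelman--Greene insertion is translation-invariant in the letters, so uniformly shifting every index up by one---which converts reduced words of $u$ into reduced words of $w = 1 \oplus u$---leaves the insertion shape unchanged. Hence every reduced expression for $\pi_\nu$, in particular the one arising from the reading biword of any $\nu$-tree $T$, inserts into a tableau of shape $\F_\nu$, and the recording tableau $\Y(T)$ inherits the same shape. The main obstacle is the first step, namely verifying that the specific right-to-left, top-to-bottom reading order produces a reduced expression; once this is in place the remaining steps reduce to an appeal to Edelman--Greene theory together with the dominance of $u$.
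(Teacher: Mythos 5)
Note first that the paper does not prove this statement at all: it is quoted directly from Serrano and Stump (their Lemma~3.1), so there is no internal argument to compare with, and you are supplying a proof where the paper gives a citation. Your overall route --- read off the crossings of the reduced pipe dream attached to $T$, observe that the bottom row of the biword is a reduced word, and then invoke Edelman--Greene theory together with the vexillarity/dominance of the underlying permutation to force a unique insertion shape --- is a legitimate strategy, and the ``main obstacle'' you flag is in fact the standard Bergeron--Billey/Knutson--Miller fact about reading words of reduced pipe dreams, so it is not where the difficulty lies.

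The genuine gap is that your chain of identifications never pins down the shape against its conjugate, which is exactly the content of the lemma, and at least one of your intermediate assertions is stated with the wrong convention. Three separate convention choices each transpose the shape: (i) the biword is read by rows from top to bottom and right to left, which is the exact reverse of the order defining $Q_\nu$; since the product of the crossings in the $Q_\nu$ order is $\pi_\nu$ (this is what Figure~\ref{fig:vtree_expression} and Theorem~\ref{thm:TamariSubwordComplex} use), the bottom word of the biword, multiplied left to right, is a reduced word for $\pi_\nu^{-1}$, not $\pi_\nu$; (ii) the paper's ``Rothe diagram'' $\{(\pi(j),i)\}$ is the transpose of the usual $\{(i,\pi(j))\}$, so ``the Rothe diagram of $u$ is $\F_\nu$'' holds only in the paper's convention, whereas the classical evaluation $F_u=s_{\lambda(u)}$ for dominant $u$ is stated for the standard one (under which it is $u^{-1}$ whose diagram is $\F_\nu$); (iii) the classical Edelman--Greene theorem concerns row (Coxeter--Knuth) insertion, while the paper uses column insertion, and on a given reduced word these can produce conjugate shapes. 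A concrete test: for $\nu=\sfE\sfE\sfN$ one has $\F_\nu=(2)$ and $\pi_\nu=[1,3,4,2,5]$; the biword of every $\nu$-tree has bottom word $(3,2)$, which is a reduced word for $\pi_\nu^{-1}=[1,4,2,3,5]$ and column-inserts to shape $(2)=\F_\nu$, whereas $(2,3)$, a reduced word for $\pi_\nu$ itself, column-inserts to the conjugate $(1,1)$. So feeding ``reduced word of $\pi_\nu$'' and ``$F_u=s_{\F_\nu}$'' into the standard statements, as your write-up does, would output the conjugate shape. The argument is salvageable: work with $\pi_\nu^{-1}=1\oplus u^{-1}$, whose standard Rothe diagram is $\F_\nu$, fix the column-insertion convention once and for all, and use vexillarity to conclude that all reduced words insert to a common shape --- which can then be computed explicitly on a single convenient tree such as $\Tmin$. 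But as written the proposal does not distinguish $\F_\nu$ from its conjugate, and that bookkeeping is precisely what the lemma asserts.
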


\begin{lemma}[{\cite[Proposition~3]{woo_catalan_2004}}]\label{lem:kthrowEG}
The $k$th row of the tableau $\Y(T)$ contains only entries $k$ or $k+1$. 
\end{lemma}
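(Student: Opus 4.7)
My plan is to follow the template of Woo's original argument (cited as~\cite[Proposition~3]{woo_catalan_2004}), adapted from the classical Dyck-path case to arbitrary~$\nu$. The reading biword of $T$ has its top row equal to the weakly increasing string $1^{m_1}2^{m_2}\cdots$, where $m_i = |T^c \cap \{\text{row } i\}|$, because $T^c$ is traversed row by row from top to bottom. Consequently, the entries equal to $k$ in $\Y(T)$ are precisely those recorded when inserting the \emph{stage-$k$ batch} of the biword, namely the decreasing sequence of values $k+j-1$ indexed by the points $(k,j) \in T^c$ read right-to-left. The lemma reduces to proving that every new box added to the shape of the insertion tableau $\X$ during stage~$k$ is placed in row $k$ or row $k+1$.

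I would proceed by induction on $k$, maintaining an inductive description of $\X$ after stage $k-1$: its shape should coincide with the first $k-1$ rows of $\F_\nu$, and its entries in row $k-1$ should all be strictly less than the minimum value of the stage-$k$ batch. Inserting the decreasing stage-$k$ values into $\X$ via column Edelman--Greene insertion then proceeds column by column, with each value $k+j-1$ being propagated until it reaches a column whose current length is $k-1$ or $k$, at which point the insertion terminates by appending a new box either in row $k$ or in row $k+1$. The $\nu$-tree structure is what makes this dovetail: the columns $j$ with $(k,j) \in T^c$ are exactly the complement of the columns of the row-$k$ nodes of $T$, and the latter control the column lengths of $\X$ after stage $k-1$ in precisely the way needed for the bumping trajectories to land correctly.

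The main obstacle will be managing the special Edelman--Greene bumping rule, in which a column already containing both $x$ and $x+1$ is left unchanged while $x+1$ is propagated to the next column. Ruling out bumping cascades that could cause a new box to appear in row $k+2$ or higher requires establishing a delicate invariant on the row-$(k-1)$ entries of $\X$ that persists through each intermediate step of stage $k$. Once this invariant is in place, the result follows mechanically, paralleling Woo's original Dyck-path argument and its extension by Serrano and Stump to the nested setting~\cite{serrano_maximal_2012}; in our context the only real novelty is bookkeeping the shape of $\F_\nu$ and the row-$k$ node positions of $T$, rather than any new combinatorial phenomenon.
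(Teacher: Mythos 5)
The paper does not actually prove this lemma: it is imported verbatim from Woo~\cite[Proposition~3]{woo_catalan_2004} (see also~\cite{serrano_maximal_2012}), so there is no in-paper argument to compare with; judged on its own, your plan has a genuine gap. You correctly observe that, because the top row of the reading biword is the weakly increasing word $1^{m_1}2^{m_2}\cdots$, the entries of $\Y(T)$ record the stage at which each box of the shape is created. But then the lemma ``row $k$ of $\Y(T)$ contains only $k$ or $k+1$'' is equivalent to ``the boxes created during stage $m$ lie in rows $m$ and $m-1$'', whereas you set out to prove that stage-$k$ insertions append boxes in rows $k$ and $k+1$. That statement has the spillover in the wrong direction: it would force the entries of row $k$ of $\Y(T)$ to lie in $\{k-1,k\}$, contradicting the lemma, and it is false. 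In the paper's own running example (Figure~\ref{fig_EdelmanGreene}, reading biword with bottom word $5,4,3,2,6,4,5,4,3,6,4,5$), the stage-$3$ batch $5,4,3$ creates boxes at positions $(3,1)$, $(2,3)$, $(2,4)$: two of the three new boxes land in row $2$, and none in row $4$.

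The inductive invariants you propose also fail, so the induction cannot even be set up as stated. After stage $k-1$ the shape of $\X(T)$ is \emph{not} the first $k-1$ rows of $\F_\nu$; it consists of rows $1,\dots,k-2$ of $\F_\nu$ complete together with a possibly proper initial segment of row $k-1$, which is only completed during stage $k$ --- this is precisely why entries $k$ occur in row $k-1$ of $\Y(T)$. In the same example, after stage $3$ the third row of the shape has one box while row $3$ of $\F_\nu$ has two, the second being added at stage $4$. Likewise the claim that all entries in row $k-1$ of $\X$ are strictly smaller than the minimum of the stage-$k$ batch is false: before stage $3$ the second row of $\X$ is $(4,6)$ while the batch minimum is $3$ (and already before stage $2$ the first row is $(2,3,4,5)$ with batch minimum $4$). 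A correct argument must instead track, within stage $k$, how each bumping path terminates either at the end of the partially built row $k-1$ or at the bottom of a column of current length $k-1$, via an invariant relating column lengths of $\X$ to the positions of the nodes of $T$ in rows $\le k$ --- or one simply invokes Woo's proposition and its extension by Serrano and Stump, as the paper does.
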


\begin{definition}[{\cite{woo_catalan_2004}}]
For a $\nu$-tree $T$, define $\EG(T)$ to be the $\nu$-path such that the boxes weakly above it are precisely those whose row number matches their label in $\Y(T)$. Figure~\ref{fig_EdelmanGreene} illustrates an example.
\end{definition}

\begin{proposition}\label{prop:EdelmanGreene}
For every $\nu$-tree $T$, we have $\EG(T)=\lflush(T)$.
\end{proposition}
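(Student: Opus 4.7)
The plan is to exploit the fact that a $\nu$-path is uniquely determined by its number of lattice points in each row, and then verify separately that this row-distribution is the same for $\EG(T)$ and $\lflush(T)$. Indeed, knowing how many lattice points a $\nu$-path has at each height $k$ pins down the number of east steps it takes at that height, and hence the path itself.

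For $\lflush(T)$ this is immediate: by construction the left-flushing bijection rearranges the nodes of $T$ horizontally within rows but never moves them between rows, so the number of lattice points of $\lflush(T)$ on row $k$ coincides with the number of nodes of $T$ on row $k$.

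For $\EG(T)$ I would decode the row distribution from the recording tableau $\Y(T)$. By definition of $\EG(T)$, a box of $\F_\nu$ in row $k$ lies weakly above $\EG(T)$ exactly when its label in $\Y(T)$ equals~$k$. Lemma~\ref{lem:kthrowEG} restricts the entries of the $k$th row of $\Y(T)$ to the set $\{k,k+1\}$, and since $\Y(T)$ is a semistandard Young tableau, the $k$'s in row $k$ form a left-justified block whose length determines the number of boxes above $\EG(T)$ in that row. The total number of labels equal to $k$ in $\Y(T)$ is prescribed by the reading biword: it is exactly the number of points of $T^{c}$ in row $k$ of $\A_\nu$. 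Since these $k$-labels can occur only in rows $k-1$ and~$k$ of $\Y(T)$, a downward recursion starting from the top row determines, for every~$k$, the number of $k$-labels in row $k$ of $\Y(T)$, and hence the number of boxes of row $k$ of $\F_\nu$ that lie weakly above $\EG(T)$. Translating this cell-count back into the number of lattice points of $\EG(T)$ on each row, and comparing with the row distribution of $T$, yields equality of row distributions and therefore $\EG(T)=\lflush(T)$.

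The main obstacle is the careful bookkeeping in this recursion. One must be precise about (i) the two indexing conventions, rows of $\A_\nu$ in which $T$ lives versus rows of $\F_\nu$ in which $\Y(T)$ lives; (ii) the conversion between the count of cells of $\F_\nu$ weakly above a $\nu$-path and the count of lattice points of the path at a given height; and (iii) the base case of the recursion at the top row of $\Y(T)$, where there are no $(k-1)$-entries to account for.
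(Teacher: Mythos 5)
Your proof is correct and takes essentially the same route as the paper: both compare the two paths row by row, using Lemma~\ref{lem:kthrowEG} to count the boxes above $\EG(T)$ in each row (your downward recursion, once unrolled, is exactly the paper's count $\lambda_k(\EG(T))=\sum_{i\le k}|T_i^c|-\sum_{i<k}\lambda_i(\nu)$, Equation~\eqref{eq_edelmangreen_partition}) and the fact that flushing never moves points between rows to handle the $\lflush$ side. The bookkeeping you defer does close routinely: differencing consecutive rows of that formula gives $\lambda_{k-1}(\nu)-|T_k^c|+1=|T_k|$ lattice points of $\EG(T)$ in row $k$, which matches the row distribution of $\lflush(T)$.
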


\begin{proof}
We denote by $T_k$ (resp. $T_k^c$) the points in $T$ (resp. $T^c$) that are in row $k$. 
For a $\nu$-path $\mu$ we denote by $\lambda_k(\mu)$ the number of boxes above $\mu$ in row $k$ (from top to bottom). By Lemma~\ref{lem:kthrowEG}, the values $1,2,\dots, k $ fill all the boxes in the tableau $\Y(T)$ that are above $\nu$ in the first $k-1$ rows and part of boxes in the $k$th row. Therefore, we have:
\begin{equation}\label{eq_edelmangreen_partition}
\lambda_k(\EG(T)) = 
\sum_{i=1}^{k} |T_k^c| -
\sum_{i=1}^{k-1} \lambda_i(\nu).
\end{equation}
On the other hand, $\lambda_k(\lflush(T))$ counts the number of columns that are forbidden by rows below row $k$ in the left flushing bijection, and so:
\begin{equation}\label{eq_flushing_partition}
\lambda_k(\lflush(T)) = 
\sum_{i>k} (|T_i|-1).
\end{equation}
We need to show that $\lambda_k(\EG(T)) = \lambda_k(\lflush(T))$.
Note that the difference on the right hand side of Equation~\eqref{eq_edelmangreen_partition} is independent of the position of the vertices in $T$ located in the first $k$ rows. 
Moreover, if there were no forbidden columns produced by rows below row $k$, then this difference would be equal to zero. 
Now, each forbidden column produced by a row below row $k$ increases this difference by one. Therefore,  
\begin{equation*}
\sum_{i=1}^{k} |T_k^c| - \sum_{i=1}^{k-1} \lambda_i(\nu) =
\sum_{i>k} (|T_i|-1).
\end{equation*}
This finishes the proof.
\end{proof}

\section{Multi \texorpdfstring{$\nu$}{v}-Tamari complexes}\label{sec:geom_multiasso}

For any integer $k\geq 1$ one may define a $(k,\nu)$-tree as a maximal subset of~$\A_\nu$ without $k+1$ pairwise $\nu$-incompatible elements. The \defn{$\knu$-Tamari complex} is the simplicial complex on $\A_\nu$ whose facets are $(k,\nu)$-trees. 
This object was introduced by Jonsson in~\cite{Jonsson2005}.
For $\nu=(NE)^n$ this coincides with the simplicial complex of $(k+1)$-crossing-free subsets of diagonals of a convex $(n+2)$-gon. 
This complex is conjectured to be realizable as the boundary complex of a simplicial polytope~\cite{Jonsson2005}, whose dual would be a simple polytope $\Delta_{n+2,k}^*$ known as the \defn{simple multiassociahedron} (see the introductions of~\cite{BergeronCeballosLabbe2015} and~\cite{Manneville2017}, and the references therein, for the current knowledge on the existence of these polytopes). 

For $k=1$ we have recently shown that the facet adjacency graphs of $(1,\nu)$-Tamari complexes can be realized as the edge graphs of polytopal subdivisions of (simple) associahedra%
\footnote{Whenever $\nu$ does not have two consecutive non-initial north steps or does not have two consecutive non-final east steps, which holds in particular when the lattice paths weakly above $\nu$ are rational Dyck paths, like Fuss-Catalan paths.}%
~\cite{tams2017}.
We believe that a similar statement might be true for general~$k$. The following proposition is a first positive result in this direction:

\begin{proposition}\label{prop:multi}
Let 
$m\geq k$ and $\nu=(NE^m)^{k+1}$. The facet adjacency graph of the Fuss-Catalan $(k,\nu)$-Tamari complex is the edge graph of a polytopal subdivision of the simple multiassociahedron $\Delta_{2k+2,k}^*$ (a $k$-dimensional simplex).
\end{proposition}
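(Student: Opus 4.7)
My plan is as follows. First, I would verify that $\Delta^*_{2k+2,k}$ is combinatorially the $k$-simplex. The $(2k+2)$-gon has exactly $k+1$ $k$-relevant diagonals, namely the diameters $\{i,i+k+1\}$ for $i=1,\ldots,k+1$, which pairwise cross and together form the unique maximal $(k+1)$-crossing. Each $k$-triangulation consists of all $k$-boundary diagonals together with $k$ of these $k+1$ diameters (i.e.\ choosing which one to omit), so there are exactly $k+1$ $k$-triangulations, any two of which are related by a single flip. Hence the 1-skeleton of $\Delta^*_{2k+2,k}$ is the complete graph $K_{k+1}$, confirming that $\Delta^*_{2k+2,k}$ is a $k$-simplex.

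Next, I would analyze the Fuss-Catalan $(k,\nu)$-Tamari complex for $\nu=(NE^m)^{k+1}$ with $m\geq k$. The Ferrers diagram $F_\nu$ is a staircase with $k+1$ horizontal steps, each of height 1 and width $m$; the hypothesis $m\geq k$ guarantees that each step is wide enough to accommodate $(k+1)$-chains of pairwise $\nu$-incompatible points, so the combinatorics of $(k,\nu)$-trees behaves as expected in the Fuss-Catalan regime. Generalizing the subword complex interpretation of Theorem~\ref{thm:TamariSubwordComplex}, I would identify the $(k,\nu)$-Tamari complex with the subword complex $\mathcal{SC}(Q_\nu^{(k)},\pi_\nu^{(k)})$ associated to a natural word $Q_\nu^{(k)}$ and permutation $\pi_\nu^{(k)}=[1,\ldots,k-1]\oplus\pi_\nu$, building on the multi-pipe-dream correspondence of Serrano and Stump~\cite{serrano_maximal_2012}. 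This realizes $(k,\nu)$-trees as reduced pipe dreams for $\pi_\nu^{(k)}$ and flips as general chute moves.

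Finally, I would construct the polytopal subdivision of $\Delta^*_{2k+2,k}$ by generalizing the construction of~\cite{tams2017} from $k=1$ to arbitrary $k$. To each $(k,\nu)$-tree $T$, I would assign a point $\mathbf{x}(T)\in\Delta^*_{2k+2,k}$ via a brick-polytope-type statistic derived from the subword complex interpretation of the previous step, and then verify that this point configuration is the vertex set of a regular polytopal subdivision of the $k$-simplex whose edges correspond exactly to flips between $(k,\nu)$-trees. The main obstacle is precisely this third step: the $k=1$ construction in~\cite{tams2017} proceeds through sections of products of simplices and a careful study of $\nu$-associahedra, so adapting it to $k>1$---where the target polytope degenerates to a single $k$-simplex while the $(k,\nu)$-tree combinatorics remains rich---requires a delicate handling of the multi-pipe-dream structure. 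The hypothesis $m\geq k$ is expected to play a pivotal role in ensuring that the resulting subdivision is honest and has the correct 1-skeleton.
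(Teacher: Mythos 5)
Your first two steps (identifying $\Delta^*_{2k+2,k}$ as a $k$-simplex and viewing $(k,\nu)$-trees as facets of a subword complex / reduced pipe dreams via Serrano--Stump) are sound and agree with the paper's setup, but your third step is a genuine gap: you state the plan of assigning a brick-polytope-type point $\mathbf{x}(T)$ to each $(k,\nu)$-tree and ``then verifying'' that these points are the vertices of a polytopal subdivision with the correct edges, and you yourself flag this verification as the main obstacle without supplying the argument. That is precisely the content of the proposition, and generalizing the $k=1$ machinery of~\cite{tams2017} ($\nu$-associahedra, sections of products of simplices, tropical realizations) to $k>1$ is not carried out anywhere in your proposal; indeed, whether such a realization exists for general $k$ and $\nu$ is exactly the open problem (Question~\ref{que:multisubd}), so a proof cannot rest on the expectation that this generalization goes through.

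The paper's proof avoids that machinery entirely by exploiting the very special structure of the Fuss--Catalan case. First, one reduces to the staircase path $\mu=E^mN^k$: all points of $\A_\nu$ beyond the $m$th column (and in the lowest row, and in the first and last $k$ diagonals) are \emph{irrelevant}, i.e.\ lie in every $(k,\nu)$-tree, so the facet adjacency graph is unchanged. After this reduction the relevant word is $Q=(s_{k+1},\dots,s_{m+1})^{k+1}$ and the permutation is $\pi=s_{k+1}s_{k+2}\cdots s_{m+1}$, which has a \emph{unique} reduced expression. Hence facets of $\subwordComplex{Q}{\pi}$ are exactly the choices $1\le j_1\le\cdots\le j_{m+1-k}\le k+1$ of which copy of the factor each letter is taken from, i.e.\ minimal non-crossing subgraphs of $K_{k+1,m+1-k}$ covering one side. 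These are precisely the vertices of the staircase triangulation of $\Delta_{m-k}\times\Delta_k$, and the Cayley trick turns that triangulation into a fine mixed subdivision of the dilated simplex $(m-k+1)\cdot\Delta_k$ whose vertices are the facets and whose edges are the flips. This combinatorial identification (unique reduced word $\to$ non-crossing bipartite graphs $\to$ staircase triangulation $\to$ Cayley trick) is the missing idea in your proposal; your hypothesis $m\ge k$ enters only to make $\pi$ and the staircase picture well-defined, not through any ``delicate handling'' of multi-pipe-dream geometry.
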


\begin{proof}
 We will show that the facet adjacency graph of the $\knu$-Tamari complex is the edge graph of a fine mixed subdivision of an $(m-k+1)$-fold dilated $k$-dimensional simplex~$(m-k+1)\cdot\Delta_k$, obtained from the staircase triangulation of $\productp{m-k}{k}$ via the Cayley trick~\cite{San05} (we refer to Sections~6.2 and 9.2 of~\cite{DeLoeraRambauSantos2010} for a nice introduction to triangulations of products of simplices, mixed subdivisions, and the Cayley trick).

 Our first observation is that the facet adjacency graph of the $(k,\nu)$-Tamari complex coincides with the facet adjacency graph of the $(k,\mu)$-Tamari complex for $\mu=E^mN^k$. Indeed, all the lattice points $p\in \A_\nu$ beyond the $m$th column, as well as the point in the lowest row, belong to every $\knu$-tree. The reason is that such a point $p$ cannot be contained in a $(k+1)$-subset of pairwise $\nu$-incompatible elements. 
By the same token, points of $\A_\mu$ lying in the first $k$ and in the last $k$ southwest-northeast diagonals weakly above $\mu$ also belong to every $(k,\mu)$-tree. We call such nodes belonging to every $(k,\mu)$-tree \defn{irrelevant nodes}. See Figure~\ref{fig:irrelevant}, where irrelevant nodes are drawn as white-filled dots. 
 
  \begin{figure}[htbp]
 \centering
 \includegraphics[width=.45\textwidth]{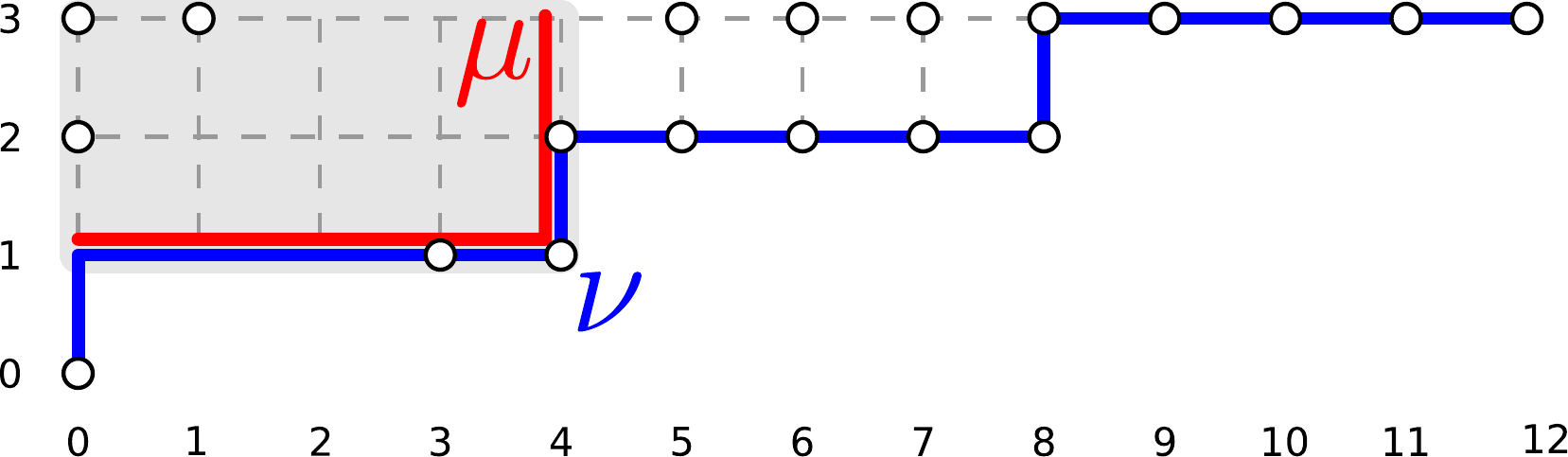} \qquad 
  \includegraphics[width=.45\textwidth]{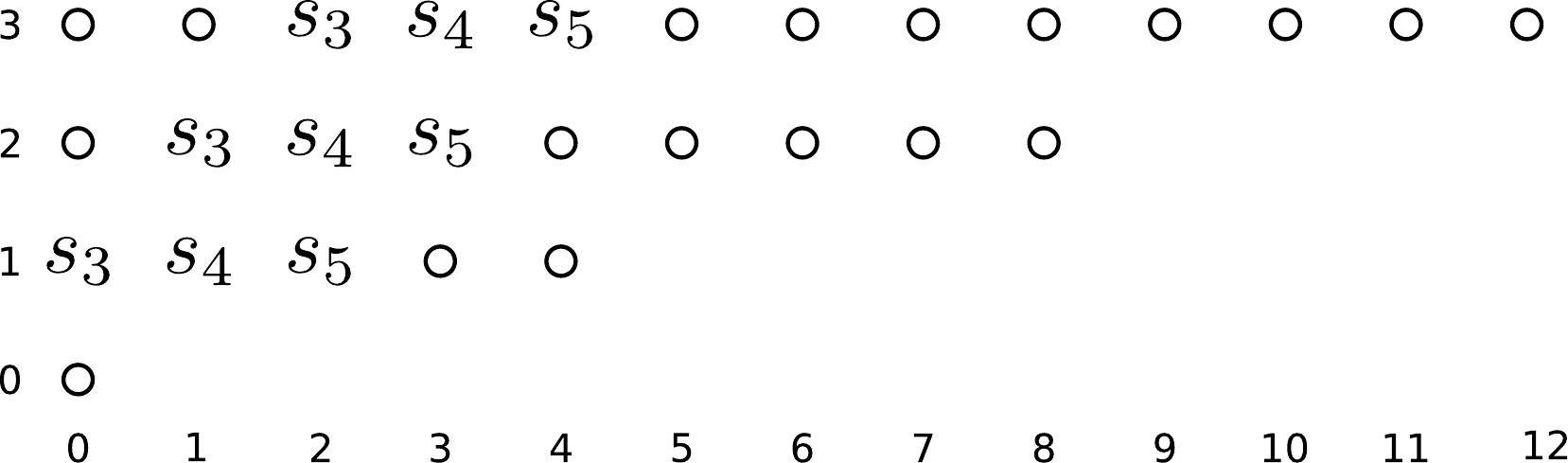}
 \caption{Example for $m=4$ and $k=2$. The white-filled dots in the left figure are irrelevant because they are contained in every facet of the $(k,\nu)$-Tamari complex. The right figure displays the corresponding relevant transpositions for the word $Q_{k,\nu}$.}\label{fig:irrelevant}
 
 \end{figure}

 In~\cite[Theorem~2.1]{serrano_maximal_2012}, Serrano and Stump showed that $\knu$-trees can be viewed as reduced pipe dreams of a certain permutation. In particular, this identifies the $\knu$-Tamari complex as the join of a simplex with a subword complex $\subwordComplex{Q}{\pi}$. The vertices of the simplex correspond to the irrelevant lattice points weakly above~$\nu$, and the vertices of the subword complex to the relevant ones. More precisely, label the lattice points weakly above $\nu$ similarly as in Section~\ref{sec:pipeequalsubword} (see Figure~\ref{fig:irrelevant}), and let $Q=Q_{k,\nu}$ be the word obtained 
 by reading the labels of the \emph{relevant points} of each row from left to right, and the rows from bottom to top.
 \[Q=Q_{k,\nu}=(s_{k+1}, s_{k+2},\dots, s_{m+1})^{k+1}.\]
 The permutation is given by $\pi=\pi_{k,\nu}=s_{k+1} s_{k+2}\cdots s_{m+1}$.
 
 The facet adjacency graph of the $\knu$-Tamari complex is therefore equal to the facet adjacency graph of $\subwordComplex{Q}{\pi}$. Since there is only one possible reduced expression for $\pi$, given by $s_{k+1} s_{k+2}\cdots s_{m+1}$, the reduced expressions of $\pi$ in $Q$ correspond to possible matchings
 \[(s_{k+1},j_1), (s_{k+2},j_2),\dots, (s_{m+1},j_{m+1-k})\]
 such that $1\leq j_1\leq j_2\leq\cdots\leq j_{m+1-k}\leq k+1$, where $j_i$ is the copy of the factor $(s_{k+1}, s_{k+2},\dots, s_{m+1})$ in $Q$ from which $s_{k+i}$ is chosen; or equivalently the height of the lattice point that corresponds to this transposition.
 
 Such matchings can be encoded as subgraphs of the complete bipartite graph $G_{k,\nu}\cong K_{k+1,m+1-k}$ with color classes $S=\{s_{k+1},\ldots,s_{m+1}\}$ and $\{1,\ldots,k+1\}$. If we draw the color classes of vertices of $G_{k,\nu}$ as parallel columns with the given order, it follows that reduced expressions correspond to minimal subgraphs of $G_{k,\nu}$ whose edges cover all the vertices in~$S$ without crossing (that is, no pair of edges of the form $(s_i, j)$ and $(s_{i'}, j')$ with $i<i'$ and $j>j'$). Two reduced expressions differ by an element if and only if the corresponding graphs differ by an edge.

 Subgraphs of $G_{k,\nu}$ with the non-crossing property determine the cells in the staircase triangulation of the product of simplices $\Delta_{m-k}\times\Delta_{k}$ (see~\cite[Section~6.2]{DeLoeraRambauSantos2010}). By means of the Cayley trick (cf.~\cite[Section~9.2]{DeLoeraRambauSantos2010}), we obtain a fine mixed subdivision of $(m-k+1)\cdot\Delta_k$ whose cells are in bijection with the non-crossing subgraphs of $G_{k,\nu}$ covering~$S$. In particular, the vertices of this subdivision are in bijection with minimal non-crossing subgraphs of $G_{k,\nu}$ covering~$S$, which correspond to  
 reduced expressions of $\pi$ in $Q$, or equivalently, to the facets of $\subwordComplex{Q}{\pi}$. Two vertices are connected by an edge in the subdivision if and only if the corresponding facets are adjacent in the subword complex.
\end{proof}

Motivated by the proof of Proposition~\ref{prop:multi}, it is natural to consider only lattice paths of the form $\nu=E^k\mu N^k$ because all points strictly south-west (resp. north-east) of the $k$th east step (resp. $k$th north step in reverse order) of $\nu$ are irrelevant.

The following figure illustrates three examples of such paths for $k=2$. For the first path, we get that the $(2,\nu_1)$-Tamari complex can be obtained as the boundary complex of a $4$-dimensional cyclic polytope with $7$ vertices (see Lemma~8.8 in~\cite{PilaudSantos2009}), which is the dual of the simple multiassociahedron $\Delta_{7,2}^*$. It would be interesting to know if the facet adjacency graphs of the $(2,\nu_2)$- and $(2,\nu_3)$-Tamari complexes can be obtained as the edge graphs of some subdivisions of $\Delta_{7,2}^*$.
 
 \begin{center}
 \includegraphics[width=.65\textwidth]{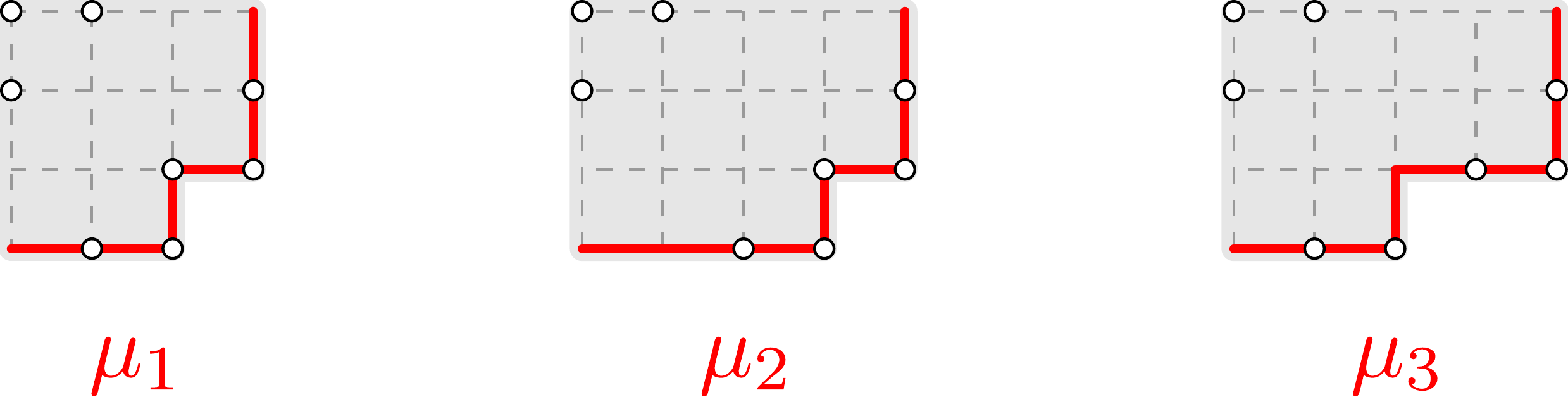}
 \end{center}

\begin{question}
\label{que:multisubd}
 Let $\nu=E^k\mu N^k$ be a lattice path such that $\mu$ does not have two consecutive north steps and does not end with a north step. Is the facet adjacency graph of the $\knu$-Tamari complex realizable as the edge graph of a polytopal subdivision of a simple multiassociahedron $\Delta^*_{m+2k+2,k}$, where $m$ is the number of north steps in~$\mu$?
\end{question}

We have seen in the proof of Propoposition~\ref{prop:multi} that the answer to this question is positive when $\mu$ consists only of east steps. 
 Instances of this result are illustrated in Figures~\ref{fig:mixedStaircase} and~\ref{fig:mixedTetrahedron}.
 \begin{figure}[htpb]
\centering 
\includegraphics[width=.7\textwidth]{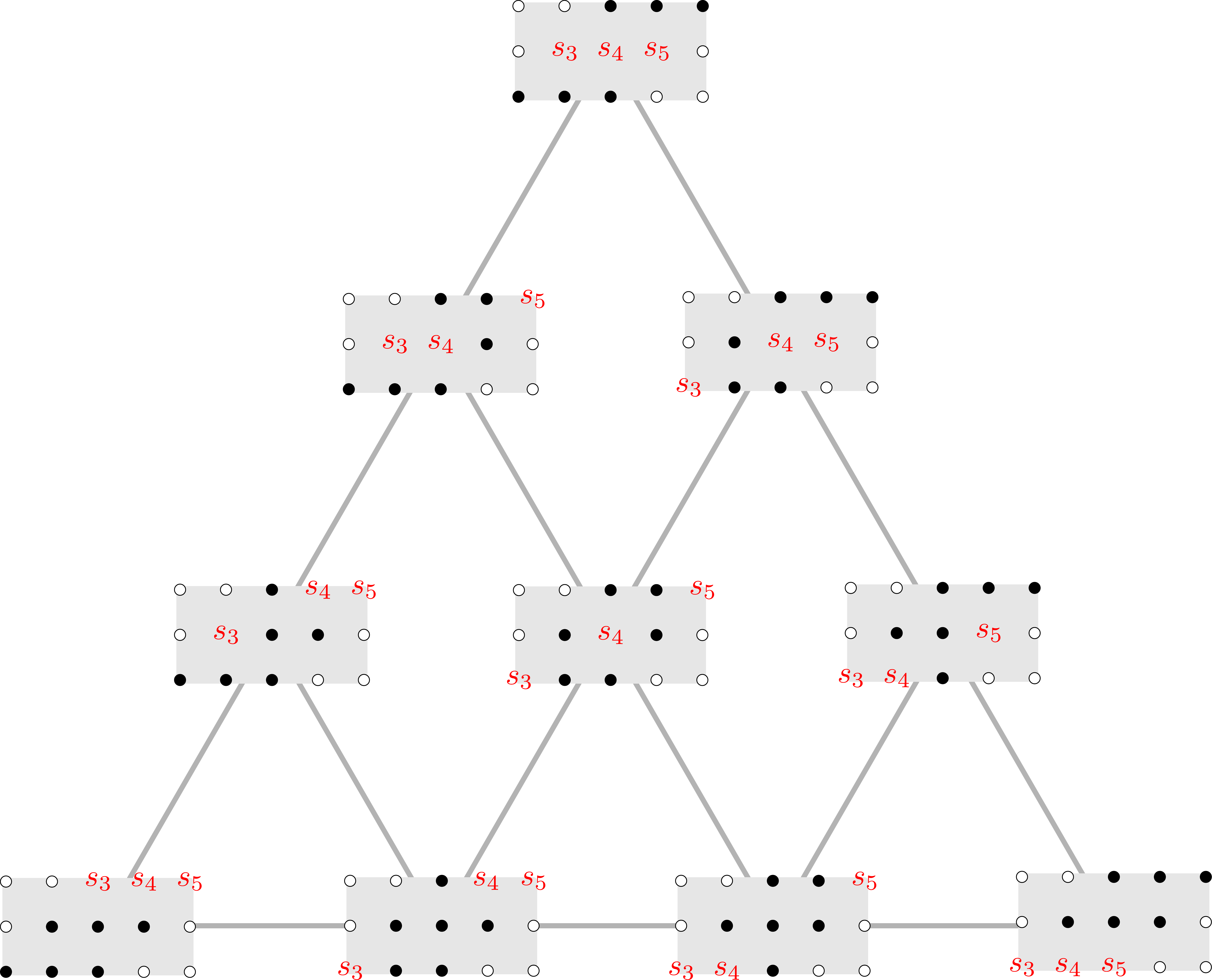}
\caption{For $k=2$ and $\nu=\sfE^4\sfN^2$, the facet adjacency graph of the $(k,\nu)$-Tamari complex is the graph of a mixed subdivision of a triangle. By the proof of Proposition~\ref{prop:multi}, this is equivalent to the facet adjacency graph of the Fuss-Catalan $(k,\nu')$-Tamari complex for $\nu'=(NE^4)^3$.}
 \label{fig:mixedStaircase}
\end{figure}

 \begin{figure}[htpb]
\centering 
\includegraphics[width=.7\textwidth]{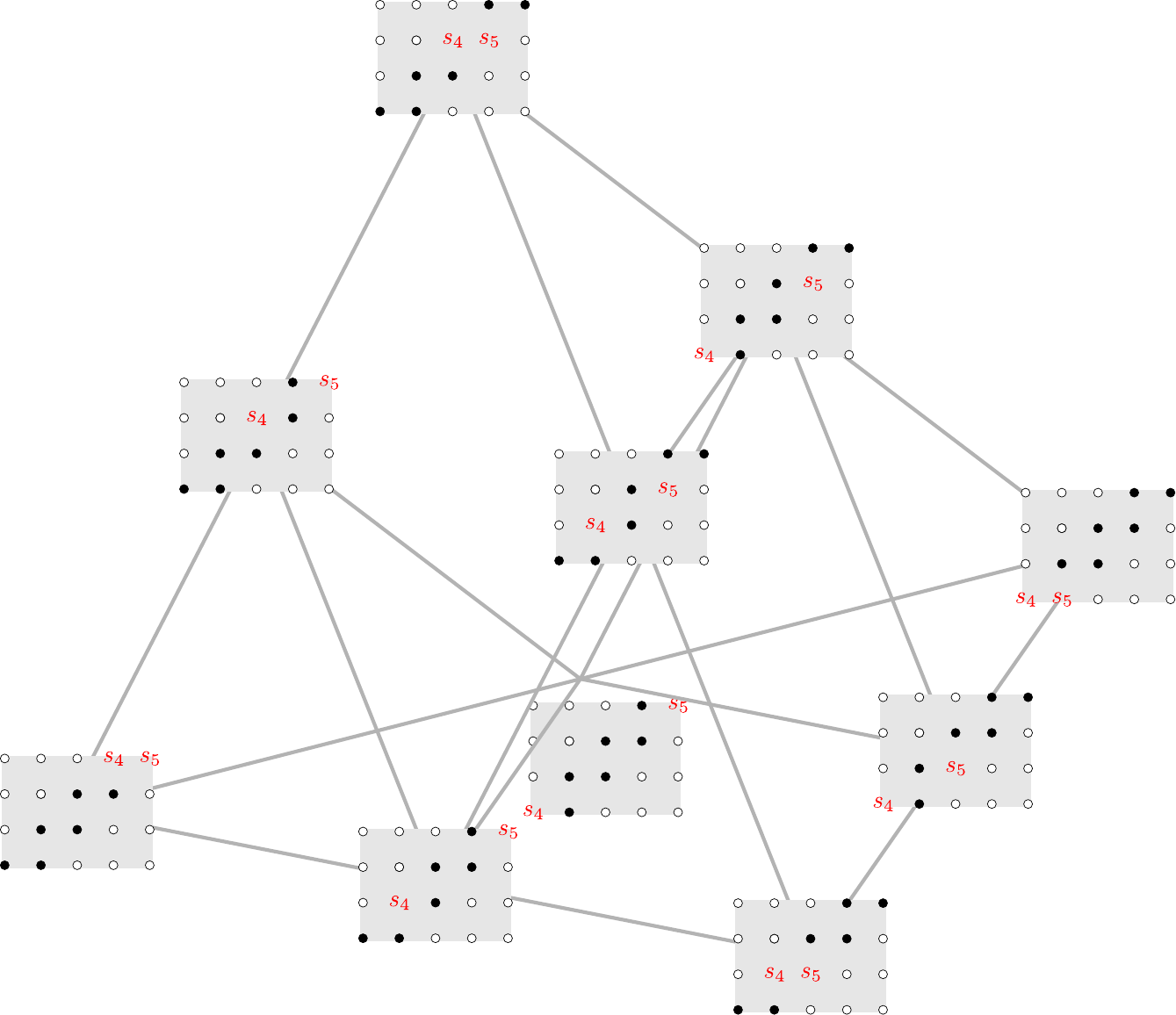}
\caption{For $k=3$ and $\nu=\sfE^4\sfN^3$, the facet adjacency graph of the $(k,\nu)$-Tamari complex is the graph of a mixed subdivision of a tetrahedron. By the proof of Proposition~\ref{prop:multi}, this is equivalent to the facet adjacency graph of the Fuss-Catalan $(k,\nu')$-Tamari complex for $\nu'=(NE^4)^4$.}
 \label{fig:mixedTetrahedron}
\end{figure}

\section*{Acknowledgements}

We thank Xavier Viennot for many valuable comments, and in particular for noticing the correspondence between $\nu$-trees and non-crossing tree-like tableaux.

\bibliographystyle{abbrv}
\bibliography{biblio}

\begin{thebibliography}{10}

\bibitem{AvalBoussicaultNadeau2013}
J.-C. Aval, A.~Boussicault, and P.~Nadeau.
\newblock Tree-like tableaux.
\newblock {\em Electron. J. Combin.}, 20(4):Paper 34, 24, 2013.

\bibitem{AvalViennot2010}
J.-C. Aval and X.~Viennot.
\newblock The product of trees in the {L}oday-{R}onco algebra through {C}atalan
  alternative tableaux.
\newblock {\em S\'em. Lothar. Combin.}, 63:Art. B63h, 8, 2010.

\bibitem{Bergeron2012}
F.~Bergeron.
\newblock {Combinatorics of $r$-{D}yck paths, $r$-{P}arking functions, and the
  $r$-{T}amari lattices}.
\newblock Preprint, Mar.~2012,
  \href{http://arxiv.org/abs/arXiv:1202.6269}{arXiv:1202.6269}.

\bibitem{BergeronPrevilleRatelle2012}
F.~Bergeron and L.-F. Pr\'{e}ville-Ratelle.
\newblock Higher trivariate diagonal harmonics via generalized {T}amari posets.
\newblock {\em J. Comb.}, 3(3):317--341, 2012.

\bibitem{bergeron_rc-graphs_1993}
N.~Bergeron and S.~Billey.
\newblock R{C}-graphs and {S}chubert polynomials.
\newblock {\em Experiment. Math.}, 2(4):257--269, 1993.

\bibitem{ceballos_Hopf_2017}
N.~Bergeron and C.~Ceballos.
\newblock A {H}opf algebra of subword complexes.
\newblock {\em Adv. Math.}, 305:1163--1201, 2017.

\bibitem{BergeronCeballosLabbe2015}
N.~Bergeron, C.~Ceballos, and J.-P. Labb\'{e}.
\newblock Fan realizations of type {$A$} subword complexes and
  multi-associahedra of rank 3.
\newblock {\em Discrete Comput. Geom.}, 54(1):195--231, 2015.

\bibitem{BjornerWachs1997}
A.~Bj{\"o}rner and M.~L. Wachs.
\newblock Shellable nonpure complexes and posets. {II}.
\newblock {\em Trans. Amer. Math. Soc.}, 349(10):3945--3975, 1997.

\bibitem{BCP2013}
M.~Bousquet-M{\'e}lou, G.~Chapuy, and L.-F. Pr{\'e}ville-Ratelle.
\newblock The representation of the symmetric group on {$m$}-{T}amari
  intervals.
\newblock {\em Adv. Math.}, 247:309--342, 2013.

\bibitem{BFP2011}
M.~Bousquet-M{\'e}lou, {\'E}.~Fusy, and L.-F. Pr{\'e}ville-Ratelle.
\newblock The number of intervals in the {$m$}-{T}amari lattices.
\newblock {\em Electron. J. Combin.}, 18(2):Paper 31, 26, 2011.

\bibitem{ceballos017}
C.~Ceballos, N.~Bergeron, and V.~Pilaud.
\newblock Hopf dreams and diagonal harmonics.
\newblock \href{https://arxiv.org/abs/1807.03044}{arXiv:1807.03044}, 2018.

\bibitem{tams2017}
C.~Ceballos, A.~Padrol, and C.~Sarmiento.
\newblock Geometry of {$\nu$}-{T}amari lattices in types {$A$} and {$B$}.
\newblock {\em Trans. Amer. Math. Soc.}, 371(4):2575--2622, 2019.

\bibitem{CeballosPilaud}
C.~Ceballos and V.~Pilaud.
\newblock Denominator vectors and compatibility degrees in cluster algebras of
  finite type.
\newblock {\em Trans. Amer. Math. Soc.}, 367(2):1421--1439, 2015.

\bibitem{chapoton_sur_2005}
F.~Chapoton.
\newblock Sur le nombre d'intervalles dans les treillis de {T}amari.
\newblock {\em S\'em. Lothar. Combin.}, 55:Art. B55f, 18pp, 2005/07.

\bibitem{CorteelNadeau2009}
S.~Corteel and P.~Nadeau.
\newblock Bijections for permutation tableaux.
\newblock {\em European J. Combin.}, 30(1):295--310, 2009.

\bibitem{CorteelWilliams2011}
S.~Corteel and L.~K. Williams.
\newblock Tableaux combinatorics for the asymmetric exclusion process and
  {A}skey-{W}ilson polynomials.
\newblock {\em Duke Math. J.}, 159(3):385--415, 2011.

\bibitem{DeLoeraRambauSantos2010}
J.~A. De~Loera, J.~Rambau, and F.~Santos.
\newblock {\em Triangulations}, volume~25 of {\em Algorithms and Computation in
  Mathematics}.
\newblock Springer-Verlag, Berlin, 2010.
\newblock Structures for algorithms and applications.

\bibitem{edelmangreene_balanced_1987}
P.~Edelman and C.~Greene.
\newblock Balanced tableaux.
\newblock {\em Adv. in Math.}, 63(1):42--99, 1987.

\bibitem{fang_enumeration_2017}
W.~Fang and L.-F. Pr\'eville-Ratelle.
\newblock The enumeration of generalized {Tamari} intervals.
\newblock {\em European Journal of Combinatorics}, 61:69--84, Mar. 2017.

\bibitem{FominKirillov93}
S.~Fomin and A.~N. Kirillov.
\newblock The {Y}ang-{B}axter equation, symmetric functions, and {S}chubert
  polynomials.
\newblock In {\em Proceedings of the 5th {C}onference on {F}ormal {P}ower
  {S}eries and {A}lgebraic {C}ombinatorics ({F}lorence, 1993)}, volume 153,
  pages 123--143, 1996.

\bibitem{FominZelevinsky-ClusterAlgebrasI}
S.~Fomin and A.~Zelevinsky.
\newblock Cluster algebras. {I}. {F}oundations.
\newblock {\em J. Amer. Math. Soc.}, 15(2):497--529 (electronic), 2002.

\bibitem{FominZelevinsky-ClusterAlgebrasII}
S.~Fomin and A.~Zelevinsky.
\newblock Cluster algebras. {II}. {F}inite type classification.
\newblock {\em Invent. Math.}, 154(1):63--121, 2003.

\bibitem{HuangTamari1972}
S.~Huang and D.~Tamari.
\newblock Problems of associativity: {A} simple proof for the lattice property
  of systems ordered by a semi-associative law.
\newblock {\em J. Combinatorial Theory Ser. A}, 13:7--13, 1972.

\bibitem{Jonsson2005}
J.~Jonsson.
\newblock Generalized triangulations and diagonal-free subsets of stack
  polyominoes.
\newblock {\em J. Combin. Theory Ser. A}, 112(1):117--142, 2005.

\bibitem{Knuth93}
D.~E. Knuth.
\newblock Computer musings: The associative law, or the anatomy of rotations in
  binary trees.
\newblock Distinguished Lecture Series VII.

\bibitem{knutson_subword_2004}
A.~Knutson and E.~Miller.
\newblock Subword complexes in {C}oxeter groups.
\newblock {\em Adv. Math.}, 184(1):161--176, May 2004.

\bibitem{knutson_grobner_2005}
A.~Knutson and E.~Miller.
\newblock {Gr\"obner geometry of Schubert polynomials}.
\newblock {\em Ann. Math. (2)}, 161(3):1245--1318, 2005.

\bibitem{LodayRonco1998}
J.-L. Loday and M.~O. Ronco.
\newblock Hopf algebra of the planar binary trees.
\newblock {\em Adv. Math.}, 139(2):293--309, 1998.

\bibitem{manivel_symmetric_2001}
L.~Manivel.
\newblock {\em Symmetric functions, {S}chubert polynomials and degeneracy
  loci}, volume~6 of {\em SMF/AMS Texts and Monographs}.
\newblock American Mathematical Society, Providence, RI; Soci\'et\'e
  Math\'ematique de France, Paris, 2001.
\newblock Translated from the 1998 French original by John R. Swallow, Cours
  Sp\'ecialis\'es [Specialized Courses], 3.

\bibitem{Manneville2017}
T.~Manneville.
\newblock Fan realizations for some 2-associahedra.
\newblock {\em Experimental Mathematics}, 0(0):1--18, 2017.

\bibitem{TamariFestschrift}
F.~M{\"u}ller-Hoissen, J.~M. Pallo, and J.~Stasheff, editors.
\newblock {\em Associahedra, {T}amari Lattices and Related Structures. Tamari
  Memorial Festschrift}, volume 299 of {\em Progress in Mathematics}.
\newblock Birkh\"auser/Springer, Basel, New York, 2012.

\bibitem{NovelliThibon2014}
J.-C. Novelli and J.-Y. Thibon.
\newblock Hopf algebras of $m$-permutations, $(m+ 1)$-ary trees, and
  $m$-parking functions.
\newblock {\em arXiv:1403.5962}, 2014.

\bibitem{Pallo86}
J.~M. Pallo.
\newblock Enumerating, ranking and unranking binary trees.
\newblock {\em Comput. J.}, 29(2):171--175, 1986.

\bibitem{PilaudPocchiola}
V.~Pilaud and M.~Pocchiola.
\newblock Multitriangulations, pseudotriangulations and primitive sorting
  networks.
\newblock {\em Discrete Comput. Geom.}, 48(1):142--191, 2012.

\bibitem{PilaudSantos2009}
V.~Pilaud and F.~Santos.
\newblock Multitriangulations as complexes of star polygons.
\newblock {\em Discrete Comput. Geom.}, 41(2):284--317, 2009.

\bibitem{Postnikov2006}
A.~Postnikov.
\newblock Total positivity, {G}rassmannians, and networks.
\newblock {\em arXiv preprint math/0609764}, 2006.

\bibitem{PrevilleRatelleViennot2014}
L.-F. Pr\'{e}ville-Ratelle and X.~Viennot.
\newblock The enumeration of generalized {T}amari intervals.
\newblock {\em Trans. Amer. Math. Soc.}, 369(7):5219--5239, 2017.

\bibitem{RoteSantosStreinu-polytopePseudotriangulations}
G.~Rote, F.~Santos, and I.~Streinu.
\newblock Expansive motions and the polytope of pointed pseudo-triangulations.
\newblock In B.~Aronov, S.~Basu, J.~Pach, and M.~Sharir, editors, {\em Discrete
  and Computational Geometry, The Goodman-Pollack Festschrift}, volume~25 of
  {\em Algorithms Combin.}, pages 699--736. Springer, Berlin, 2003.

\bibitem{RoteSantosStreinu-survey}
G.~Rote, F.~Santos, and I.~Streinu.
\newblock Pseudo-triangulations~---~a survey.
\newblock In {\em Surveys on discrete and computational geometry}, volume 453
  of {\em Contemp.~Math.}, pages 343--410. Amer. Math. Soc., Providence, RI,
  2008.

\bibitem{rubey_maximal_2012}
M.~Rubey.
\newblock Maximal {$0$}-{$1$}-fillings of moon polyominoes with restricted
  chain lengths and rc-graphs.
\newblock {\em Adv. in Appl. Math.}, 48(2):290--305, 2012.

\bibitem{San05}
F.~Santos.
\newblock {The {C}ayley trick and triangulations of products of simplices}.
\newblock In {\em {Integer points in polyhedra---geometry, number theory,
  algebra, optimization}}, volume 374 of {\em {Contemp. Math.}}, pages
  151--177. Amer. Math. Soc., Providence, RI, 2005.

\bibitem{serrano_maximal_2012}
L.~Serrano and C.~Stump.
\newblock Maximal fillings of moon polyominoes, simplicial complexes, and
  {S}chubert polynomials.
\newblock {\em Electron. J. Combin.}, 19(1):P16, Jan. 2012.

\bibitem{soll_type-b_2009}
D.~Soll and V.~Welker.
\newblock {Type-B generalized triangulations and determinantal ideals}.
\newblock {\em Discrete Math.}, 309(9):2782--2797, 2009.

\bibitem{SteingrimssonWilliams2007}
E.~Steingr\'{i}msson and L.~K. Williams.
\newblock Permutation tableaux and permutation patterns.
\newblock {\em J. Combin. Theory Ser. A}, 114(2):211--234, 2007.

\bibitem{Stump}
C.~Stump.
\newblock A new perspective on $k$-triangulations.
\newblock {\em J. Combin. Theory Ser. A}, 118(6):1794--1800, 2011.

\bibitem{TamariPhD}
D.~Tamari.
\newblock {\em {Mono{\"\i}des pr{\'e}ordonn{\'e}s et cha{\^\i}nes de Malcev}}.
\newblock PhD thesis, Sorbonne Paris, 1951.

\bibitem{ViennotMaule2017}
X.~Viennot.
\newblock Maule: tilings, {Y}oung and {T}amari lattices under the same roof.
\newblock 79th SLC (S\'eminaire Lotharingien de Combinatoire), Bertinoro,
  Italy, 11 September 2017.
\newblock Slides available at
  \href{http://www.viennot.org/abstractslc79.html}{http://www.viennot.org/abstractslc79.html}.

\bibitem{viennot2007}
X.~Viennot.
\newblock Catalan tableaux and the asymmetric exclusion process.
\newblock In {\em 19th International Conference on Formal Power Series and
  Algebraic Combinatorics (FPSAC 2007)}, 2007.

\bibitem{woo_catalan_2004}
A.~Woo.
\newblock Catalan numbers and {Schubert} polynomials for $w=1(n+1)... 2$.
\newblock \href{https://arxiv.org/abs/math/0407160}{arXiv:0407160}, 2004.

\end{thebibliography}

\end{document}